\newtheorem{theorem}{Theorem}[section]
\newtheorem{lemma}[theorem]{Lemma}
\newtheorem{corollary}[theorem]{Corollary}
\newtheorem{proposition}[theorem]{Proposition}
\theoremstyle{definition}
\newtheorem{definition}[theorem]{Definition}
\newtheorem{example}[theorem]{Example}
\newtheorem{problem}[theorem]{Problem}
\newtheorem{remark}[theorem]{Remark}
\numberwithin{equation}{section}
\newcommand{\C}{\mathbb{C}}
\newcommand{\N}{\mathbb{N}}
\newcommand{\Z}{\mathbb{Z}}
\renewcommand{\P}{\mathbb{P}}
\newcommand{\R}{\mathbb{R}}
\newcommand\Id{\mathrm{Id}}
\newcommand{\cA}{\mathcal{A}}
\newcommand{\cB}{\mathcal{B}}
\newcommand{\cC}{\mathcal{C}}
\newcommand{\cE}{\mathcal{E}}
\newcommand{\cF}{\mathcal{F}}
\newcommand{\cH}{\mathcal{H}}
\newcommand{\cJ}{\mathcal{J}}
\newcommand{\cL}{\mathcal{L}}
\newcommand{\cO}{\mathcal{O}}
\newcommand{\cS}{\mathcal{S}}
\newcommand{\cT}{\mathcal{T}}
\newcommand{\cU}{\mathcal{U}}
\newcommand\di{\partial}
\newcommand\dibar{\overline\partial}
\def\mgot{\mathfrak{m}}
\newcommand\wt{\widetilde}
\newcommand{\Aut}{\mathop{{\rm Aut}}}
\newcommand\dist{\mathrm{dist}}
\newcommand\Crit{\mathrm{Crit}}
\newcommand\reg{\mathrm{reg}}
\newcommand\sing{\mathrm{sing}}
\newcommand\rmax{\mathrm{rmax}}
\def\di{\partial}
\def\dibar{\overline\partial}
\numberwithin{equation}{section}
\begin{document}
\title{Noncritical holomorphic functions on Stein spaces}
\author[Franc Forstneri\v c]{Franc Forstneri\v c}

\address{Franc Forstneri\v c, Faculty of Mathematics and Physics, University of Ljubljana, and Institute of Mathematics, Physics and Mechanics, Jadranska 19, 1000 Ljubljana, Slovenia}
\email{franc.forstneric@fmf.uni-lj.si}

\thanks{The author was supported by the program P1-0291 and the grant J1-5432 from ARRS, Republic of Slovenia.}

\subjclass[2010]{Primary 32C42, 32E10, 32E30. Secondary 57R70, 58K05.}
\date{\today}

\keywords{Holomorphic functions, critical points, Stein manifolds, Stein spaces, $1$-convex manifolds, stratifications.}

\begin{abstract}
In this paper we prove that every reduced Stein space admits a holomorphic function without critical points.
Furthermore, every closed discrete subset of a reduced Stein space $X$ is the critical locus of a holomorphic function on $X$. 
We also show that for every complex analytic stratification with nonsingular strata on a reduced Stein space there exists 
a holomorphic function whose restriction to every stratum is noncritical. These result provide some information 
on critical loci of holomorphic functions on desingularizations of Stein spaces. In particular, every $1$-convex manifold 
admits a holomorphic function that is noncritical outside  the exceptional variety.
\end{abstract}

\maketitle


%

\section{Introduction}
\label{sec:intro}
Every Stein manifold $X$ admits a holomorphic function $f\in \cO(X)$ without critical points;
see Gunning and Narasimhan \cite{Gunning-Narasimhan} for the case of open Riemann surfaces and 
\cite{FF:Acta} for the general case. In the algebraic category this fails on any compact Riemann surface of 
genus $g\ge 1$ with a puncture (every algebraic function on such a surface has a critical point as follows  
from the Riemann-Hurwitz theorem), 
but it holds for holomorphic functions of finite order \cite{FO}. Noncritical holomorphic functions are of interest in particular 
since they define nonsingular holomorphic hypersurface foliations; results on this topic can be found in \cite{FF:Acta,FF:submersions}. 

In this paper we prove that, somewhat surprisingly, the same holds on Stein spaces. The following 
is a special case of our main result, Theorem \ref{th:mainbis}.

\begin{theorem}
\label{th:main}
Every reduced Stein space admits a holomorphic function without critical points. 
\end{theorem}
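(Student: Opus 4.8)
The plan is to deduce Theorem~\ref{th:main} from its stratified refinement, Theorem~\ref{th:mainbis}, and to indicate how the latter is proved. Recall that $x\in X$ is a critical point of $f\in\cO(X)$ precisely when the class of $f-f(x)$ vanishes in the Zariski cotangent space $T^*_xX:=\mgoth_x/\mgoth_x^2$. Fix a proper holomorphic embedding $X\hookrightarrow\C^N$ and equip $X$ with its canonical stratification by iterated singular loci: set $Z_0=X$ and $Z_{k+1}=(Z_k)_{\sing}$, so that $X=Z_0\supsetneq Z_1\supsetneq\cdots\supsetneq Z_{m+1}=\emptyset$ is a finite descending chain of closed reduced Stein subspaces in which every stratum $S_k:=Z_k\setminus Z_{k+1}=(Z_k)_{\reg}$ is a locally closed complex submanifold of $X$. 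For $x\in S_k$ the inclusions $S_k\hookrightarrow X\hookrightarrow\C^N$ induce an injection $T_xS_k\hookrightarrow T_xX$; hence if $f\in\cO(X)$ restricts to a noncritical function on every stratum $S_k$, then at each $x$ there is $v\in T_xS_k\subseteq T_xX$ with $df_x(v)\neq0$, so $f$ has no critical point on $X$. It therefore suffices to produce such an $f$, which is exactly the assertion of Theorem~\ref{th:mainbis}.

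For the stratified statement I would use a descending induction on $k$, building $f$ as the limit of functions $f_k\in\cO(X)$ that are noncritical on $S_m\cup S_{m-1}\cup\cdots\cup S_k$. The base case $k=m$ is immediate: $Z_m$ is a Stein manifold, so it carries a noncritical holomorphic function by \cite{Gunning-Narasimhan,FF:Acta}, which we extend to $f_m\in\cO(X)$ by Cartan's extension theorem. For the inductive step assume $f_{k+1}$ is given. Noncriticality on $S_m\cup\cdots\cup S_{k+1}\subseteq Z_{k+1}$ depends only on the $1$-jet of $f$ along $Z_{k+1}$ and is an open condition, hence is preserved upon replacing $f_{k+1}$ by $f_{k+1}+h$ with $h$ vanishing to second order along $Z_{k+1}$; so the task reduces to choosing such an $h$ for which $f_{k+1}+h$ becomes noncritical along the new stratum $S_k$. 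The difficulty is that $S_k=Z_k\setminus Z_{k+1}$ is a complex manifold but in general \emph{not} Stein, having been obtained by deleting a subvariety of positive codimension, so the manifold result cannot be invoked directly. Instead I would run an approximation scheme over a normal exhaustion $X=\bigcup_jL_j$ by $\cO(X)$-convex compacts: at step $j$, with $f$ already noncritical on an ever larger compact part of $S_k$, apply the Oka--Weil theorem to approximate on $L_j$ and add a holomorphic function $h_j$ — small on $L_j$ and vanishing to high order along $Z_{k+1}$ — that removes the critical points of $f|_{S_k}$ lying in $L_{j+1}\setminus L_j$ (a discrete set after a generic choice); the limit of this scheme is the desired $f_k$.

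The heart of the matter, and the step I expect to be the main obstacle, is this last modification along $S_k$: since $df|_{S_k}$ is a section of the cotangent bundle of $S_k$, whose rank equals $\dim S_k$, being noncritical is a condition of codimension equal to the dimension, so a single small perturbation cannot in general delete a critical point — one needs a genuine $h$-principle of convex-integration type, as in \cite{FF:Acta,FF:submersions}. The Stein hypothesis is what makes it work: by Cartan's Theorems~A and~B there are enough global holomorphic functions on $X$ vanishing along $Z_{k+1}$ whose differentials span $T^*_xX$ at every $x\in S_k$, which supplies the flexibility required for the convex-integration argument, and a Thom-type transversality in the relevant $1$-jet space then pushes $d(f|_{S_k})$ off the zero section while keeping intact the finitely many noncriticality conditions previously secured on $Z_{k+1}$ (these survive because $h_j$ vanishes to high order there). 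With the approximations controlled so that $\sum_j h_j$ converges in $\cO(X)$, the limit $f$ is noncritical on every stratum of $X$, hence — by the first paragraph — a holomorphic function on $X$ without critical points.
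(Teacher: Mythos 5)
Your overall architecture --- stratify $X$ by the iterated singular loci, make the function noncritical stratum by stratum while freezing a high-order jet along the part already handled, then pass from stratified noncriticality to noncriticality via $T_xS\subset T_xX$ --- is essentially the paper's, but two essential points are missing. First, your opening reduction fails at zero-dimensional strata, and with $Z_{k+1}=(Z_k)_\sing$ these are in general unavoidable: for the cusp $\{z^2=w^3\}$ no positive-dimensional complex submanifold of $X$ passes through the origin, so $\{0\}$ is a stratum of any stratification, and ``$f|_S$ noncritical for every stratum'' is vacuous there, giving no control of $df_0$ on $T_0X$. Noncriticality of $f$ on $X$ at such a point is a condition on the $2$-jet of $f$ in $\cO_{X,0}$ (namely $f-f(0)\notin\mgoth_0^2$), and your induction never imposes it (a noncritical function on the Stein manifold $Z_m$ says nothing at the isolated points of $Z_m$). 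The paper closes exactly this gap by building jet interpolation into Theorem \ref{th:stratified}: one prescribes a strongly noncritical germ at every zero-dimensional stratum and matches it to order $\ge 2$ (Lemma \ref{lem:interpolate}); as written, your conclusion ``$f$ has no critical point on $X$'' is false at these points. (Minor: for a general Stein space the chain $Z_k$ and a global embedding $X\hra\C^N$ need only exist/stabilize on compacts.)

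Second, and more seriously, the heart of your inductive step --- producing a function whose critical locus on $S_k=Z_k\setminus Z_{k+1}$ is \emph{empty} while it agrees to high order along $Z_{k+1}$ --- is asserted, not proved. Thom-type transversality only makes $\Crit(f|_{S_k})$ discrete (this is Lemma \ref{lem:generic}); it cannot ``push $d(f|_{S_k})$ off the zero section'', because that zero set already has the expected codimension. Removing the discrete critical set requires the Runge approximation by noncritical functions on polynomially convex pieces together with a Cartan-pair gluing scheme, and near $Z_{k+1}$ this gluing cannot be imported from \cite{FF:Acta}: the correcting biholomorphisms must map the singular space to itself and be tangent to the identity to high order along $Z_{k+1}$, which is precisely the new splitting lemma (Theorem \ref{th:splitting}) proved in the paper and used inside Theorem \ref{th:th1}. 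Likewise, your repeated appeal to openness/stability is not formal: noncriticality is \emph{not} stable under small perturbations on compacta meeting $X_\sing$ (Examples \ref{ex:null2} and \ref{ex:nonstable}); smallness together with high-order vanishing along $Z_{k+1}$ does suffice, but that is the content of the Stability Lemma \ref{lem:stability}, proved by a \L ojasiewicz-type argument. So the proposal correctly names the ingredients of the manifold case but leaves unsupplied exactly the new technical content (Theorem \ref{th:splitting}, Theorem \ref{th:th1}, Lemma \ref{lem:stability}) that makes the induction work on a singular space.
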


We begin by recalling the relevant notions. All complex spaces are assumed paracompact and reduced. 
For the theory of Stein spaces we refer to Grauert and Remmert \cite{Grauert-Remmert1979}. 

Let $X$ be a complex space. Denote by $\cO_{X,x}$ the ring of germs of holomorphic function at a point $x\in X$ and by 
$\mgot_x$ the maximal ideal of $\cO_{X,x}$, so $\cO_{X,x}/\mgot_x \cong \C$. Given $f\in \cO_{X,x}$ we denote by 
$f-f(x) \in \mgot_x$ the germ obtained by subtracting from $f$ its value $f(x)\in\C$ at $x$.

\begin{definition}
\label{def:critical}
Assume that $x$ is nonisolated point of a complex space $X$. 
\begin{itemize}
\item[\rm (a)] A germ $f\in \cO_{X,x}$ at $x$ is said to be {\em critical} (and $x$ is a {\em critical point} of $f$) if 
$f-f(x) \in \mgot_x^2$ (the square of the maximal ideal $\mgot_x$), 
and is {\em noncritical} if $f-f(x) \in \mgot_x\setminus \mgot_x^2$. 
\item[\rm (b)] A germ $f\in \cO_{X,x}$ is {\em strongly noncritical at $x$} if the germ at $x$ of the restriction $f|_V$ to 
any local irreducible component $V$ of $X$ is noncritical.
\end{itemize}
Any function is considered (strongly) noncritical at an isolated point of $X$. 
\end{definition}

One can characterize these notions by the (non) vanishing of the differential $df_x$ on the {\em Zariski tangent space} $T_x X$. 
Recall that $T_xX$ is isomorphic to $(\mgot_x/\mgot_x^2)^*$, the dual of $\mgot_x/\mgot_x^2$, the latter being the cotangent 
space $T_x^*X$ (cf.\ \cite[p.\ 78]{Fischer} or \cite[p.\ 111]{KK}). The number $\dim_\C T_x X$ is the embedding dimension 
of the germ $X_x$ of $X$ at $x$. The differential $df_x\colon T_x X\to \C$ of $f\in\cO_{X,x}$ is determined by the class 
$f-f(x) \in \mgot_x/\mgot_x^2=T_x^*X$, so $f$ is critical at $x$ if and only if $df_x=0$. If $X_x=\bigcup_{j=1}^k V_j$ is a 
decomposition into local irreducible components, then $f$ is strongly noncritical at $x$ if $df_x\colon T_x V_j\to\C$ is 
nonvanishing for every $j=1,\ldots,k$. 

At a regular point $x\in X_\reg$ these notions coincide with the usual one: $x$ is a critical point of $f$ if and only if in some 
(hence in any) local holomorphic coordinates $z=(z_1,\ldots, z_n)$ on a neighborhood of $x$, with $z(x)=0$ and $n=\dim_x X$, 
we have $\frac{\di f}{\di z_j}(0)=0$ for $j=1,\ldots,n$. Hence the set $\Crit(f)$ of all critical points of a holomorphic function 
on a {\em complex manifold} $X$ is a closed complex subvariety of $X$. On a Stein manifold $X$ this set is discrete for a 
generic choice of $f\in\cO(X)$. 

The following is our main result.

\begin{theorem} 
\label{th:mainbis} 
On  every reduced Stein space $X$ there exists a holomorphic function which is strongly noncritical at every point. Furthermore, 
given a closed discrete set $P=\{p_1,p_2,\ldots\}$ in $X$, germs $f_k\in\cO_{X,p_k}$ and integers $n_k\in\N$, there exists a 
holomorphic function $F\in\cO(X)$ which is strongly noncritical on $X\setminus P$ and agrees with the germ $f_k$ to order $n_k$ 
at the point $p_k\in P$ (i.e., $F_{p_k}-f_k\in \mgot_{p_k}^{n_k}$) for every $k=1,2,\ldots$.  
\end{theorem}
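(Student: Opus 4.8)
The plan is to reduce the statement, by the standard exhaustion argument, to an approximation--with--interpolation step on $\cO(X)$-convex compacts, and to carry out that step stratum by stratum: on the singular strata by a holomorphic jet--transversality argument, and on the regular locus by the technology already available for Stein manifolds. To organize this I would first fix a locally finite Whitney stratification $\cS=\{S_\alpha\}$ of $X$ by connected nonsingular strata which is compatible with every irreducible component of $X$; such a stratification exists, and then it is automatically compatible with every local irreducible component at every point and with the intersections of components, so that $X_\sing$ is a union of strata. The point of this choice is an elementary local fact: if a point $x$ lies on a stratum $S\subseteq X_\sing$, then $S$ is contained, as a germ at $x$, in every local irreducible component $V$ of $X$ at $x$, and $\dim_\C T_xV\ge\dim S+1$. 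Indeed $T_xS\subseteq T_xV$ gives $\dim_\C T_xV\ge\dim S$, and equality would force $V$ to be smooth at $x$ with $T_xV=T_xS$, hence $S=V$ as germs at $x$; but then $V\subseteq X_\sing$ near $x$, which is absurd since $V\cap X_\sing$ is a proper analytic subset of $V$. On a regular stratum there is of course a single local component, equal to $X$ near $x$.

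Next, write $X=\bigcup_j K_j$ with $K_1\Subset K_2\Subset\cdots$ an exhaustion by $\cO(X)$-convex compacts, and fix $f_0\in\cO(X)$ realizing the prescribed jets at the points of $P$ (Oka--Cartan interpolation). One then builds $F$ as the limit of $f_j=f_{j-1}+h_j$, where $h_j$ is chosen so that: (i) $f_j$ is strongly noncritical, with a quantitative margin, on a neighborhood of $K_j\setminus P$; (ii) $h_j$ vanishes to the prescribed order at every point of $P\cap K_j$, so the interpolation is preserved; and (iii) $\|h_j\|_{K_{j-1}}$ is so small --- hence, by Cauchy estimates in a local embedding, the induced motion of the $1$-jets on $K_{j-1}$ so small --- that the limit exists and the margins secured earlier survive. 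Since strong noncriticality at a point is an open condition on the $1$-jet, $F$ is then strongly noncritical on $X\setminus P$ with the required jets at $P$. Everything thus reduces to the step: given an $\cO(X)$-convex compact $K$ and $f\in\cO(X)$, approximate $f$ uniformly on $K$ by $\tilde f\in\cO(X)$ agreeing with $f$ to prescribed orders at $P\cap K$ and strongly noncritical on a neighborhood of $K\setminus P$.

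For this step I would induct over the finitely many strata meeting $K$, treating the singular ones first. On a singular stratum $S$ of dimension $s$: by Cartan's Theorems A and B construct a finite-dimensional subspace $E\subset\cO(X)$, each element of which vanishes to the prescribed orders at $P\cap K$, such that $\{\,dh_x:h\in E\,\}$ spans $T_x^*X$ for every $x$ near $K$ with $x\notin P$; then for each local component $V$ at such an $x$ the linear map $E\ni h\mapsto dh_x|_{T_xV}$ is onto $(T_xV)^*$. The set of $h\in E$ with $d(f+h)_x|_{T_xV}=0$ is therefore a union of affine subspaces of $E$ of codimension $\dim_\C T_xV\ge s+1$; letting $x$ range over $S$ near $K$ and $V$ over the locally finitely many local components gives a ``bad'' subset of $E$ of dimension $\le\dim_\C E-1$, hence a proper analytic subset. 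A value of $h$ outside the union of these proper subsets, taken over all singular strata meeting $K$, makes $f+h$ strongly noncritical at every point of $(K\setminus P)\cap X_\sing$. On the regular locus one is then left with a compact subset of the complex manifold $X_\reg$ on which $f$ must be made noncritical in the ordinary sense; here a generic finite-dimensional perturbation does not suffice (a dimension count leaves a discrete critical set), and one must invoke the mechanism behind the Stein manifold case \cite{FF:Acta} --- the Oka principle together with compositions by flows of complete holomorphic vector fields, supplying the missing transversality --- applied on an appropriate Stein domain, with the perturbation kept small where strong noncriticality along $X_\sing$ has already been arranged.

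The main obstacle, as I see it, is precisely the coordination of these two steps. The regular-locus argument needs the full strength of the Stein manifold theory, yet it must be run on $X_\reg$, which is only an open subset of a Stein space and is not itself Stein, and whose behavior along $X_\sing$ --- where the critical points being removed could otherwise escape --- must be controlled; moreover the two steps must be performed in an order, and with perturbations small enough, that each preserves, with a quantitative margin, what the other has achieved, which is delicate near $X_\sing$. Carrying out this bookkeeping, and verifying that what emerges is exactly strong noncriticality at every point --- which is where the compatibility of the stratification with all local components and the inequality $\dim_\C T_xV\ge\dim S+1$ are used --- is the heart of the argument.
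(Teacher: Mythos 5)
Your reduction to an approximation-with-interpolation step on $\cO(X)$-convex compacts, and your observation that strata inside $X_\sing$ lie in every local irreducible component with $\dim_\C T_xV\ge\dim S+1$ (so that a generic finite-dimensional perturbation kills all critical points of the restrictions to the singular strata at once), is a reasonable start and in fact a somewhat different organization of the lower-dimensional part than the paper's, which instead proceeds by induction on the skeleta $X_0\subset X_1\subset\cdots$, \emph{extending} the function from $X_{i-1}$ to $X_i$ and keeping it fixed there (your claim that compatibility with global components is ``automatically'' compatibility with local components also needs an argument, but both you and the paper lean on that local fact). The genuine gap is in the part you defer as ``bookkeeping.'' Your inductive scheme rests on the assertion that if $\|h_j\|_{K_{j-1}}$ is small then, by Cauchy estimates in a local embedding, the $1$-jet margins secured at the previous stage survive. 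This is false near $X_\sing$: (strong) noncriticality on a compact set meeting the singular locus is \emph{not} stable under sup-norm-small perturbations, precisely because $\|df_x|_{T_xX}\|$ at regular points $x$ approaching a singular point need not be bounded below even when $f$ is noncritical at every point of the compact. The paper's Example \ref{ex:null2} on the null quadric exhibits exactly this: $f_0|_A$ is noncritical on $A\cap B(0,1/2)$, yet the uniformly small perturbations $f_\epsilon$ acquire critical points at $(\epsilon,\imath\epsilon,0)\to0$. So there is no ``quantitative margin'' of the kind your step (iii) invokes, and the limit function can fail to be noncritical at regular points adjacent to $X_\sing$.

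What the paper supplies, and your proposal lacks, are the three ingredients that overcome precisely this: (1) the Stability Lemma \ref{lem:stability}, which recovers stability only for perturbations that are both sup-norm small \emph{and} vanish to a sufficiently high finite order along the lower-dimensional subvariety $X'$ --- so the induction must interpolate jets along the whole skeleton, not merely at the discrete set $P$; (2) the Genericity Lemma \ref{lem:generic}, ensuring that a suitable extension has critical set off $X'$ which is \emph{discrete and does not accumulate on} $X'$ --- your stratum-by-stratum dimension count on the singular strata says nothing about regular critical points escaping to $X_\sing$; and (3) the new splitting lemma for biholomorphic maps close to the identity on Cartan pairs in a complex space, with the gluing maps tangent to the identity along $X'$ (Theorem \ref{th:splitting}), which is what allows the Runge-approximation/bump-by-bump removal of the remaining discrete critical points inside $X_\reg$ (the mechanism of \cite{FF:Acta} you gesture at --- and which is Runge approximation plus gluing, not flows of complete fields) to be run on the non-Stein open set $X_\reg$ while keeping the function fixed to high order along the skeleton. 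Without these, the ``coordination of the two steps'' you correctly identify as the heart of the matter is not an omitted routine verification but the main content of the theorem, and your argument as written does not go through.
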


The hypothesis on the set $P$ in Theorem \ref{th:mainbis} is a natural one since the critical locus a generic holomorphic function 
on a Stein space is discrete (see Corollary \ref{cor:generic}). 

We also prove the following result (cf.\ Theorem \ref{th:stratbis} and Corollary \ref{cor:extension}). Given a closed complex
subvariety $X'$ of a reduced Stein space $X$ and a function $f \in \cO(X')$, there exists  $F\in \cO(X)$ such that $F|_{X'}=f$ 
and $F$ is strongly noncritical on $X\setminus X'$, or it has critical points at a prescribed discrete set 
contained in $X\setminus X'$. 

The proof  of these results for Stein manifolds in \cite{FF:Acta} relies on two main ingredients: 
\begin{itemize}
\item[\rm (i)]  Runge approximation theorem for noncritical holomorphic functions on polynomially convex subset of $\C^n$ 
by entire noncritical functions (cf.\ \cite[Theorem 3.1]{FF:Acta} or \cite[Theorem 8.11.1, p.\ 381]{FF:book}), and 
\item[\rm (ii)]  a splitting lemma for biholomorphic maps close to the identity on a Cartan pair (cf.\ \cite[Theorem 4.1]{FF:Acta} 
or \cite[Theorem 8.7.2)]{FF:book}).
\end{itemize}

These tools do not apply directly at singular points of $X$. In addition, the following two phenomena make the analysis very delicate. 

Firstly, the critical locus of a holomorphic function $f\in\cO(X)$ need not be a closed complex subvariety of $X$ near a singularity. 
A simple example is $X=\{zw=0\}\subset \C^2_{(z,w)}$ and $f(z,w)=z$ with $\Crit(f)= \{(0,w)\colon w\ne 0\}$; 
for another example on an irreducible isolated surface singularity see Example \ref{ex:null} in \S \ref{sec:prel}. 
However, we will show that $\Crit(f|X_\reg)\cup X_\sing$ is a closed complex subvariety of $X$ (cf.\ Lemma \ref{lem:crit}). 

Secondly, the class of noncritical (or strongly noncritical) functions is not stable under small perturbations on 
compact sets which include singular points of $X$; see Example \ref{ex:null2}. 

The key idea used in this paper stems from the following observation:

\smallskip
\noindent {\em (*) If $S\subset X$ is a local complex submanifold of positive dimension at a point $x\in S$, 
and if the restriction of a function $f\in \cO(X)$ to $S$ is noncritical at $x$, then $f$ is noncritical at $x$ (as a function on $X$). 
If such $S$ is contained in every local irreducible component of $X$ at $x$, then $f$ is strongly noncritical at $x$.}
\smallskip

This observation naturally leads one to consider complex analytic stratifications of a Stein space and to construct 
holomorphic functions that are noncritical on every stratum. 

Recall that a (complex analytic) {\em stratification} $\Sigma=\{S_j\}$ of a complex space $X$ is a subdivision of $X$ into the 
union $X=\bigcup_j S_j$ of at most countably many pairwise disjoint connected complex manifolds $S_j$, called the {\em strata} of 
$\Sigma$, such that
\begin{itemize}
\item 
every compact set in $X$ intersects at most finitely many strata, and
\item $bS=\overline S\setminus S$ is a union of lower dimensional strata for every $S\in\Sigma$.
\end{itemize}
Such a pair $(X,\Sigma)$ is called a {\em stratified complex space}. 

Every complex analytic space admits a stratification (cf.\ Whitney \cite{Whitney,Whitney2}). An example is obtained by 
taking $X=X_0\supset X_1\supset \cdots$, where $X_{j+1}=(X_j)_\sing$ for every $j$, and decomposing the smooth differences 
$X_j\setminus X_{j+1}$ into connected components. This chain of subvarieties is stationary on each compact subset of $X$.

\begin{definition}
\label{def:stratified-noncritical}
Let $(X,\Sigma)$ be a stratified complex space. A function $f\in\cO(X)$ is said to be a {\em stratified noncritical holomorphic
 function} on $(X,\Sigma)$, or a {\em $\Sigma$-noncritical function}, if the restriction $f|_{S}$ to any stratum $S\in \Sigma$ 
 of positive dimension is a noncritical function on $S$.
\end{definition}

Clearly the critical locus of a $\Sigma$-noncritical function on $(X,\Sigma)$ is contained in the union $X_0$ of all 
$0$-dimensional strata of $\Sigma$; note that $X_0$ is a discrete subset of $X$. 

%
%
%
%
\begin{theorem} 
\label{th:stratified}
On every stratified Stein space $(X,\Sigma)$ there exists a $\Sigma$-noncritical holomorphic function $F\in \cO(X)$.  
Furthermore, $F$ can be chosen to agree to a given order $n_x\in \N$ with a given germ $f_x\in \cO_{X,x}$ 
at any $0$-dimensional stratum $\{x\}\in \Sigma$. 
\end{theorem}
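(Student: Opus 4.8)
The plan is to induct on $n=\dim X$ and reduce the theorem to an extension problem off a subvariety of smaller dimension, which I would then solve by the bumping method of \cite{FF:Acta,FF:book} adapted to the singular Stein space $X$. Two cases are immediate: if $n=0$ then $X$ is discrete, every function on $X$ is $\Sigma$-noncritical, and the prescribed values of the germs suffice; and if $X$ is a manifold and $\Sigma$ consists of its connected components (equivalently $X'=\emptyset$ below, so that $X_0=\emptyset$), the claim is the Stein manifold case of \cite{FF:Acta}. So I assume $n\ge1$ and set $X'=\bigcup\{\overline{S}:S\in\Sigma,\ \dim S\le n-1\}$. Since $\Sigma$ is a complex analytic stratification and is locally finite, $X'$ is a closed complex subvariety of $X$ with $\dim X'\le n-1$; it carries the induced stratification $\Sigma'=\{S\in\Sigma:S\subset X'\}$, it is Stein, and the discrete set $X_0$ of $0$-dimensional strata is contained in $X'$. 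A point of $X\setminus X'$ lies on a stratum of dimension exactly $n$, and near such a point that stratum coincides with $X$; hence $X\setminus X'\subset X_\reg$, $X_\sing\subset X'$, and each $n$-dimensional stratum of $\Sigma$ is exactly a connected component of the open subset $X\setminus X'$ of the manifold $X_\reg$.

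By the induction hypothesis applied to $(X',\Sigma')$ there is a $\Sigma'$-noncritical $g\in\cO(X')$ agreeing to order $n_x$ at each $x\in X_0$ with the image of $f_x$ in $\cO_{X',x}$. Using Cartan's theorems on the Stein space $X$ I extend $g$ to $G_0\in\cO(X)$ with $G_0|_{X'}=g$ and $G_0-f_x\in\mgot_x^{n_x}$ for each $x\in X_0$ (this is consistent, since the $n_x$-jet of $f_x$ restricts to that of $g$), and arrange the higher jets of $G_0$ at the points of $X_0$ so that $G_0$ has no critical point on $X\setminus X'$ in a neighborhood of $X_0$ (a genericity condition compatible with the prescribed data, incorporated as in \cite{FF:Acta}). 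Since $G_0|_S=g|_S$ is noncritical on every positive-dimensional stratum $S\subset X'$, and since every $n$-dimensional stratum is open in $X_\reg$, it remains to modify $G_0$ to a function $F$ with $F|_{X'}=g$, with $F-f_x\in\mgot_x^{n_x}$ at $X_0$, and with $dF_p\ne0$ for all $p\in X\setminus X'$; such an $F$ is $\Sigma$-noncritical, which is the theorem. Fixing generators $h_1,\dots,h_m\in\cO(X)$ of the ideal sheaf of $X'$, I look for $F$ of the form $G_0+\sum_{i=1}^m h_i\phi_i$ with $\phi_i\in\cO(X)$ vanishing to order $n_x-1$ at each $x\in X_0$: such modifications leave $G_0|_{X'}$ and the prescribed jets unchanged, while on $X\setminus X'$ the $h_i$ generate the unit ideal, so they impose no local restriction there.

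I then run the bumping scheme of \cite{FF:Acta}, carried out inside this affine family of admissible modifications of $G_0$ rather than in all of $\cO(X)$. Fix a smooth strictly plurisubharmonic exhaustion $\rho$ of $X$, exhaust $X$ by the $\cO(X)$-convex sublevel sets $\{\rho\le c_j\}$, and construct inductively admissible modifications $F_j$ of $G_0$ that are noncritical on $(X\setminus X')\cap\{\rho\le c_j\}$ and converge. In passing from $F_j$ to $F_{j+1}$ one enlarges the compact set across a strongly pseudoconvex Cartan pair $(A,B)$: on the new piece $B$ one first uses the Runge approximation of noncritical functions by noncritical ones on polynomially convex sets in a complex manifold (ingredient (i), \cite[Theorem 3.1]{FF:Acta}) to produce an admissible modification that is noncritical on $B$ and close to $F_j$ on $A\cap B$ --- a condition with content only on the part of $B$ lying in the manifold $X\setminus X'$, since on $X'$ no noncriticality is demanded --- and one then glues this to $F_j$ by the splitting lemma for biholomorphisms close to the identity (ingredient (ii)), used here in a form relative to $X'$ so that the glued function again equals $g$ on $X'$ and thus retains the jets at $X_0$. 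Where the topology of the sublevel set changes, one attaches a handle as in \cite{FF:Acta}. All of this happens inside $X\setminus X'\subset X_\reg$, away from $X_\sing$, and Lemma \ref{lem:crit} guarantees that the bad locus one is pushing off the compact sets stays a closed complex subvariety, keeping the construction consistent. Summing the convergent corrections yields the desired $F\in\cO(X)$.

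The step I expect to be the main obstacle is the bumping near $X_\sing$. The tools (i)--(ii) of the manifold case are not available at singular points of $X$, and, as Example \ref{ex:null2} shows, noncritical functions are not stable under small perturbations across singularities, so the gluing cannot be achieved by approximation alone: the correction terms must vanish \emph{exactly} on $X'$, not merely be small there. The dimension induction is designed precisely to neutralize this difficulty, by arranging that noncriticality is ever only required on $X\setminus X'$, an open subset of the manifold $X_\reg$ disjoint from $X_\sing$, so that near the singular locus the delicate gluing has only to preserve the data already fixed on $X'$ rather than to create noncriticality there. Thus the genuinely new analytic ingredient needed is a splitting lemma for biholomorphisms close to the identity on a strongly pseudoconvex Cartan pair in the (possibly singular) Stein space $X$ which fixes a prescribed closed subvariety $X'$ pointwise; the compatibility of the interpolation conditions at the discrete set $X_0$ with this scheme is a further, more routine point, handled exactly as in \cite{FF:Acta}.
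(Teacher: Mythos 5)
Your overall architecture is the same as the paper's: reduce to the skeleton pair $X'\subset X$ (the paper organizes this as an upward induction over the skeleta $X_i$ rather than downward on $\dim X$, which also sidesteps the fact that a Stein space may have strata of unbounded dimension), extend from $X'$ with jet interpolation at $X_0$ (Lemma \ref{lem:interpolate}), and then remove the critical points in $X\setminus X'$ by Cartan-pair bumping in the regular part, gluing with a splitting lemma relative to $X'$; you correctly single out that relative splitting lemma (Theorem \ref{th:splitting}) as the new analytic ingredient. The genuine gap is in your treatment of a neighborhood of the positive-dimensional part of $X'$, on both the ``achieve'' and the ``preserve'' side.

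First, you never produce noncriticality on a deleted neighborhood of $X'$ away from $X_0$. Every bump $B$ on which \cite[Theorem 3.1]{FF:Acta} applies must lie in the manifold $X_\reg\setminus X'$ (it needs local coordinates and polynomial convexity of the attaching set), so no finite chain of bumps reaches points of $X\setminus X'$ arbitrarily close to $X'$; and approximation cannot create noncriticality there because of the instability phenomenon. The paper gets this from genericity: by the transversality argument of Lemma \ref{lem:generic} (ii)--(iii), a generic extension of the data on $X'$ is already noncritical in a deleted neighborhood of $X'$ (Lemma \ref{lem:extension}(d)), so the initial set $A_0\supset K\cup(L\cap X')$ of the Cartan chain of \cite[Lemma 8.4]{FP2} is taken care of and all bumps $B_j$ can be placed in $L\setminus X'$. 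You arrange such genericity only near $X_0$. Second, and more seriously, your admissible corrections $\sum_i h_i\phi_i$ vanish only to first order along $X'$, and you assert that exact vanishing on $X'$ plus smallness suffices to protect what has been achieved. Example \ref{ex:nonstable} (see also Example \ref{ex:null2}) refutes exactly this: on the cusp curve with $X'=\{(0,0)\}$, an arbitrarily small perturbation vanishing on $X'$ creates a critical point at $(\epsilon^3,\epsilon^2)\in X\setminus X'$ arbitrarily close to $X'$. This is why Theorem \ref{th:th1} and its proof require the corrections to lie in $\cE\,\cJ^r_{X'}$ for a sufficiently high order $r$ (with $r_i$ increasing along the exhaustion, condition (f) in the induction), justified by the quantitative Stability Lemma \ref{lem:stability} (bounded extension operator plus a \L ojasiewicz-type inequality), and why the splitting maps in Theorem \ref{th:splitting} must be tangent to the identity to a prescribed finite order along $X'$, not merely fix $X'$ pointwise. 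Without these ingredients your limit function can acquire critical points accumulating on $X'$, so the scheme as stated does not close.
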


Theorem \ref{th:stratified} is proved in \S\ref{sec:stratified}. Assuming it for the moment we give

\smallskip
\noindent{\em Proof of Theorems \ref{th:main} and \ref{th:mainbis}.} 
We may assume that $X$ has no isolated points. Choose a complex analytic stratification $\Sigma$ of $X$ containing a given 
discrete set $P\subset X$ in the union $X_0=\{p_1,p_2,\ldots\}$ of its zero dimensional strata. For every $i=1,2,\ldots$ let $X_i$
denote the union of all strata of dimension at most $i$ (the {\em $i$-skeleton} of $\Sigma$). Note that $X_i$ is a closed complex
subvariety of $X$ (since the boundary of each stratum is a union of lower dimensional strata), the difference 
$X_{i}\setminus X_{i-1}$ is either empty or a complex manifold of dimension $i$, and 
\begin{equation}
\label{eq:skeleton}
	X_0\subset X_1\subset X_2\subset\cdots \subset \bigcup_{i=0}^\infty X_i=X. 
\end{equation}
Given germs $f_k\in \cO_{X,p_k}$ $(p_k\in X_0)$ and integers $n_k\in \N$, Theorem \ref{th:stratified} furnishes a 
$\Sigma$-noncritical function $F\in\cO(X)$ such that $F_{p_k}-f_{p_k} \in \mgot_{p_k}^{n_k}$ for every $p_k\in X_0$. 
We claim that $F$ is strongly noncritical on $X\setminus X_0$. Indeed, given a point $x\in X\setminus X_0$, pick the smallest 
$i\in\N$ such that $x\in X_i$, so  $x\in X_i\setminus X_{i-1}$ which is a complex manifold of dimension $i$. 
Let $S_i\subset X_i\setminus X_{i-1}$ be the connected component containing $x$. Then the germ of $S_i$ at $x$ is 
contained in every local irreducible component of $X$ at $x$. Since $x$ is a noncritical point of $F|_{S_i}$, it follows from (*) 
that $F$ is strongly noncritical at $x$, thereby proving the claim. By choosing $f_k$ to be strongly noncritical at $p_k\in X_0$ 
we obtain a function $F\in\cO(X)$ that is strongly noncritical on $X$. (To get a strongly noncritical function at a point $p\in X$, 
we can embed $X_p$ as a local complex subvariety of the Zariski tangent space $T_p X\cong\C^N$ and choose a linear function 
on $T_pX$ which is nondegenerate on the tangent space to every local irreducible component of $X$.) 
\qed\smallskip

The proof of Theorem \ref{th:stratified} (cf.\ \S\ref{sec:stratified}) proceeds by induction on the skeleta $X_i$ (\ref{eq:skeleton}). 
The main induction step is furnished by Theorem \ref{th:th1} which provides holomorphic functions on a Stein space 
which have no critical points in the regular locus.
When passing from $X_{i-1}$ to $X_{i}$, we first apply the transversality theorem to show that the 
critical locus of a generic holomorphic extension of a given function on $X_{i-1}$ is discrete and does not accumulate on $X_{i-1}$
(cf.\ Lemma \ref{lem:generic}). We then extend the function to $X_{i}$ without creating any critical points in 
$X_{i}\setminus X_{i-1}$, keeping it fixed to a high order along $X_{i-1}$. To this end we adjust one of the main tools 
from \cite{FF:Acta}, namely {\em the splitting lemma for biholomorphic maps close to the identity} 
on a Cartain pair \cite[Theorem 4.1]{FF:Acta}, to the setting of Stein spaces; see Theorem \ref{th:splitting} 
in \S\ref{sec:gluing} below. 

Besides its original use, this splitting lemma from \cite{FF:Acta} has found a variety of applications. 
In particular, it was used for exposing boundary points of certain classes of pseudoconvex domains, a technique applied in the 
constructions of proper holomorphic embeddings of open Riemann surfaces to $\C^2$ \cite{FW2009,FW2013,Majcen2009}, 
in the construction of complete bounded complex curves in $\C^n$ and minimal surfaces in $\R^3$ \cite{AlF1,AlF3}, 
and in the study of the {\em holomorphic squeezing function} of domains in $\C^n$ \cite{DGZ,DFW,FW2014}. 
We are hoping that Theorem \ref{th:splitting} in this paper will also prove useful for other purposes. 
As explained in Remark \ref{rem:3.2gen}, Theorem \ref{th:splitting} and its proof can be generalized to
the case when the biholomorphic map to be decomposed, and possibly also the underlying Cartan pair, depend on some
additional parameters.

%
%
%
%
We mention a couple of immediate corollaries of Theorem \ref{th:stratified}. 

\begin{corollary}
Let $(X,\Sigma)$ be a stratified Stein space. Given a closed discrete set $P$ in $X$, there exists a 
holomorphic function $F\in\cO(X)$ such that for any stratum $S\in \Sigma$ with $\dim S>0$ we have $\Crit(F|_S)= P\cap S$. 
\end{corollary}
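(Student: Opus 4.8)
The plan is to deduce the corollary from Theorem~\ref{th:stratified} after refining the stratification so that the points of $P$ become zero-dimensional strata. Concretely, I would construct a stratification $\Sigma'$ of $X$ by replacing each stratum $S\in\Sigma$ with $\dim S>0$ by the complex manifold $S\setminus P$ together with the points of $P\cap S$, each taken as a $0$-dimensional stratum, while keeping the $0$-dimensional strata of $\Sigma$ unchanged. Here $P\cap S$ is discrete (hence countable) in $S$, and since $\dim_\R S\ge 2$ the manifold $S\setminus P$ remains connected (if one prefers, one may instead take its connected components). By construction $P$ lies in the $0$-skeleton of $\Sigma'$.

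I would then check that $\Sigma'$ is again a stratification. A compact set $K\subset X$ meets finitely many strata of $\Sigma$, and $K\cap P$ is finite (being compact and discrete), so $K$ meets finitely many strata of $\Sigma'$. Moreover, since $S\setminus P$ is dense in $S$ one has $\overline{S}=\overline{S\setminus P}$ and hence
\[
\overline{S\setminus P}\setminus(S\setminus P)=bS\cup(P\cap S),
\]
where $bS$ is a union of $\Sigma$-strata of dimension $<\dim S$, each of which is in turn a union of $\Sigma'$-strata of no larger dimension, while the points of $P\cap S$ are $0$-dimensional $\Sigma'$-strata; since $\dim(S\setminus P)=\dim S\ge 1$, all these strata have strictly smaller dimension, so the frontier condition holds.

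Next I would apply Theorem~\ref{th:stratified} to the stratified Stein space $(X,\Sigma')$, prescribing at each point $p\in P$ (a $0$-dimensional stratum of $\Sigma'$) that $F$ agree to order $n_p=2$ with the zero germ, i.e.\ $F_p\in\mgot_p^2$, and imposing no condition at the remaining $0$-dimensional strata. This yields $F\in\cO(X)$ that is $\Sigma'$-noncritical and vanishes to second order at every point of $P$.

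It remains to verify the claim. Fix a stratum $S\in\Sigma$ with $\dim S>0$. Then $S\setminus P$ is a positive-dimensional stratum of $\Sigma'$, on which $F$ is noncritical; hence $\Crit(F|_S)\subset P\cap S$. Conversely, for $p\in P\cap S$ the restriction homomorphism $\cO_{X,p}\to\cO_{S,p}$ carries $\mgot_p^k$ into $\mgot_{S,p}^k$ for all $k$, where $\mgot_{S,p}$ denotes the maximal ideal of $\cO_{S,p}$; so $F_p\in\mgot_p^2$ forces the germ of $F|_S$ at $p$ into $\mgot_{S,p}^2$, i.e.\ $p\in\Crit(F|_S)$. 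Therefore $\Crit(F|_S)=P\cap S$, as asserted. The only point requiring genuine care is the verification that $\Sigma'$ satisfies the local finiteness and frontier conditions of a stratification, which is precisely where the hypothesis that $P$ be closed and discrete in $X$ enters; everything else is immediate from Theorem~\ref{th:stratified} and the elementary behaviour of powers of the maximal ideal under restriction to a submanifold.
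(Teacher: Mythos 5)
Your proposal is correct and follows essentially the same route as the paper: pass to the refined stratification in which the points of $P$ become $0$-dimensional strata and apply Theorem~\ref{th:stratified}, using its jet-interpolation clause (germs in $\mgot_p^2$ at each $p\in P$) to force criticality there. You merely spell out the verification of the stratification axioms and the two inclusions for $\Crit(F|_S)$, which the paper leaves implicit in its one-sentence proof.
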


This follows from Theorem \ref{th:stratified} applied to a substratification $\Sigma'$ of $\Sigma$ which contains the given 
discrete set $P$ in the zero dimensional skeleton.

By considering the level sets of a function satisfying Theorem \ref{th:stratified} we obtain the following.

\begin{corollary}
\label{cor:foliation}
Every stratified Stein space $(X,\Sigma)$ admits a holomorphic foliation $\cL=\{L_a\}_{a\in A}$ with closed leaves  such that 
for every stratum $S\in\Sigma$ the restricted foliation $\cF|_S=\{L_a\cap S\}_{a\in A}$ is a nonsingular hypersurface 
foliation on $S$.
\end{corollary}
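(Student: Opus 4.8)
The plan is to deduce Corollary \ref{cor:foliation} directly from Theorem \ref{th:stratified} by taking the level sets of a single $\Sigma$-noncritical holomorphic function. First I would invoke Theorem \ref{th:stratified} to obtain $F\in\cO(X)$ that is $\Sigma$-noncritical, i.e.\ $F|_S$ is noncritical on every stratum $S\in\Sigma$ of positive dimension. For each value $a$ in the image $A=F(X)\subset\C$ set $L_a=F^{-1}(a)$; since $F$ is holomorphic and $X$ is Stein (hence the fibers are closed complex subvarieties), the family $\cL=\{L_a\}_{a\in A}$ partitions $X$ into closed sets. I would then observe that on the complement of the discrete set $X_0$ of zero-dimensional strata, $F$ is strongly noncritical (this was established in the proof of Theorems \ref{th:main} and \ref{th:mainbis}), so $dF$ does not vanish there and the fibers of $F$ are smooth hypersurfaces of $X_\reg$ away from $X_0$; more importantly, the fibers give a genuine holomorphic foliation in the sense one wants, with the caveat that leaves through points of $X_0$ or through $X_\sing$ are handled stratum-by-stratum in the next step.

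Next I would restrict to a fixed stratum $S\in\Sigma$ with $\dim S>0$. Since $S$ is a connected complex manifold and $F|_S$ is noncritical, the differential $d(F|_S)_x\colon T_xS\to\C$ is surjective (nonzero) at every $x\in S$. Hence by the holomorphic submersion picture, $F|_S\colon S\to\C$ is a submersion onto its image, and the level sets $L_a\cap S=(F|_S)^{-1}(a)$ form a nonsingular holomorphic hypersurface foliation of $S$ — the local flow-box structure is provided by the constant-rank/implicit function theorem applied to $F|_S$ near each point of $S$. This is exactly the assertion that $\cF|_S=\{L_a\cap S\}_{a\in A}$ is a nonsingular hypersurface foliation on $S$. (In the statement the symbol $\cF$ should be read as $\cL$; I would either correct this to $\cL|_S$ or simply write $\cL|_S=\{L_a\cap S\}_{a\in A}$.)

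Finally I would assemble these stratum-wise statements: the family $\cL=\{L_a\}_{a\in A}$ consists of closed leaves (each being a closed fiber of the holomorphic function $F$ on the Stein space $X$), and for every positive-dimensional stratum $S$ the induced family restricts to a nonsingular hypersurface foliation $\cL|_S$ as just shown. On zero-dimensional strata there is nothing to prove since a point carries only the trivial foliation. This completes the deduction.

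I expect essentially no serious obstacle here — the corollary is a formal consequence of Theorem \ref{th:stratified} once one translates ``noncritical on $S$'' into ``$F|_S$ is a holomorphic submersion $S\to\C$,'' whose level sets automatically foliate $S$ by smooth hypersurfaces. The only mild subtlety worth a sentence is that the leaves $L_a$ of the global family $\cL$ need not be smooth or connected as subsets of $X$ (they can meet $X_\sing$ and $X_0$), so ``foliation'' for $\cL$ on all of $X$ must be understood as the partition into fibers of $F$, with the honest nonsingular-foliation property guaranteed only after restricting to each stratum; this is precisely what the statement asserts, so no extra work is needed beyond flagging it.
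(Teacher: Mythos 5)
Your argument is correct and matches the paper's own (one-line) proof exactly: both take a $\Sigma$-noncritical function $F$ from Theorem \ref{th:stratified} and use its level sets, whose restriction to each positive-dimensional stratum $S$ is a nonsingular hypersurface foliation because $F|_S$ is a submersion. Nothing further is needed.
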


In the remainder of this introduction we indicate how Theorems \ref{th:main}, \ref{th:mainbis}, and \ref{th:stratified} imply 
results concerning critical loci of holo\-mor\-phic functions on complex manifolds which are obtained by desingularizing Stein spaces. 

The simplest example of this type is obtained by desingularizing a Stein space $Y$ with isolated singular points 
$Y_\sing=\{p_1,p_2,\ldots\}$. Let $\pi:X\to Y$ be a desingularization (cf.\ \cite{AHV,BM,Hironaka}). 
The fiber $E_j= \pi^{-1}({p_j})$ over any singular point of $Y$ is a connected compact complex 
subvariety of $X$ of positive dimension with negative normal bundle in the sense of Grauert  \cite{Grauert:modif}. 
(A local strongly plurisubharmonic function near $p_j\in Y$ pulls back to a function that is strongly plurisubharmonic on a 
deleted neighborhood of $E_j$.) The set $\cE =\pi^{-1}(Y_\sing) =\bigcup_j E_j$ is a complex subvariety of $X$ with compact 
irreducible components of positive dimension, and $\cE$ contains any compact complex subvariety of $X$ without $0$-dimensional
components. Furthermore, we have $\pi_* \cO_X = \cO_Y$ and $\pi\colon X\to Y$ is the {\em Remmert reduction} of $X$ 
\cite{Grauert:modif,Remmert}. If $Y$ has only finitely many singular points then the manifold $X$ is 
{\em $1$-convex} and $\cE$ is the {\em exceptional variety} of $X$ \cite{Grauert:q-convexity}. 
By choosing a noncritical function $g\in \cO(Y)$ furnished by Theorem \ref{th:main}, the function $f=g\circ\pi\in \cO(X)$ 
clearly satisfies $\Crit(f)\subset \cE$. Similary, if  $A$ is a discrete set in $X$ then $\pi(A)$ is discrete in $Y$, and by choosing 
$g\in \cO(Y)$ with $\Crit (g) =\pi(A)$ we get a function $f=g\circ\pi\in \cO(X)$ with $\Crit(f) \setminus \cE= A\setminus \cE$. 
If $A$ intersects every connected component of $\cE$, we have $\Crit(f)=A\cup \cE$. This gives the following corollary.

\begin{corollary}
\label{cor:1convex}
A $1$-convex manifold $X$ with the exceptional variety $\cE$ admits a holomorphic function $f\in \cO(X)$ with 
$\Crit(f)\subset \cE$. Furthermore, given a closed discrete set $A$ in $X$, there exists a function $f\in \cO(X)$ 
with $\Crit(f)=A\cup \cE$.
\end{corollary}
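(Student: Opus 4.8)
The plan is to reduce the statement to Theorems \ref{th:main} and \ref{th:mainbis} via the Remmert reduction. Recall that a $1$-convex manifold $X$ has a Remmert reduction $\pi\colon X\to Y$ onto a Stein space $Y$ in which the exceptional variety $\cE$ is compact, $S:=\pi(\cE)$ is a finite subset of $Y$ containing $Y_\sing$ with $\pi^{-1}(S)=\cE$, and $\pi$ restricts to a biholomorphism $X\setminus\cE\bihol Y\setminus S$ (see \cite{Grauert:modif,Grauert:q-convexity,Remmert}). For the first assertion I would simply take a holomorphic function $g\in\cO(Y)$ without critical points, furnished by Theorem \ref{th:main}, and set $f=g\circ\pi\in\cO(X)$; since $\pi$ is biholomorphic near every point of $X\setminus\cE$, the function $f$ has no critical point there, whence $\Crit(f)\subset\cE$.

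For the second assertion the additional task is to force $g$ to be critical at every point of $S$ and at the images of the points of $A$, while keeping it noncritical elsewhere. The mechanism is the elementary observation that for $x\in X$ the pullback $\pi^{*}\colon\cO_{Y,\pi(x)}\to\cO_{X,x}$ is a homomorphism of local rings, and hence carries $\mgot_{\pi(x)}$ into $\mgot_{x}$ and $\mgot_{\pi(x)}^{2}$ into $\mgot_{x}^{2}$; consequently, if $g$ is critical at $\pi(x)$ then $f=g\circ\pi$ is critical at $x$. This must be exploited along all of $\cE$: by the observation (*) preceding this corollary, a function $g$ that is noncritical at some point of $S$ pulls back to an $f$ that is noncritical at the corresponding points of $\cE$ --- even though the fibres of $\pi$ over $S$ are positive dimensional --- so arranging criticality of $g$ on $Y_\sing$ alone (as in the informal discussion above) is not enough in general.

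Concretely, I would proceed as follows. Put $B=\pi(A\setminus\cE)\subset Y\setminus S$. Since $\cE$ is compact and $A$ is closed and discrete in $X$, the set $A\cap\cE$ is finite and $A\setminus\cE$ has no accumulation point on $\cE$; using that $\pi$ is proper and restricts to a homeomorphism $X\setminus\cE\to Y\setminus S$, one checks that $P:=B\cup S$ is a closed discrete subset of $Y$. Now apply Theorem \ref{th:mainbis} to $Y$ with the discrete set $P$, prescribing at each point $p\in P$ the zero germ to order $n_p=2$; this produces $g\in\cO(Y)$ that is strongly noncritical on $Y\setminus P$ and lies in $\mgot_p^2$, hence is critical, at every $p\in P$, so $\Crit(g)=P$. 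Set $f=g\circ\pi$. By the pullback remark, $f$ is critical at every $x$ with $\pi(x)\in P$; since $\pi^{-1}(S)=\cE$ and $\pi(A\setminus\cE)=B\subset P$, we get $A\cup\cE\subset\Crit(f)$. Conversely, if $x\notin A\cup\cE$ then $\pi(x)\in Y\setminus S$ and $\pi(x)\notin B$ --- otherwise $x\in\pi^{-1}(B)=A\setminus\cE$, using the injectivity of $\pi$ off $\cE$ --- so $\pi(x)\notin P$; since $\pi$ is biholomorphic near $x$ and $g$ is noncritical at $\pi(x)$, $f$ is noncritical at $x$. Hence $\Crit(f)=A\cup\cE$, as required.

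I do not expect a serious obstacle here: once Theorems \ref{th:main}--\ref{th:mainbis} are in hand the argument is largely bookkeeping. The two points that require a little care are the topological separation of the discrete set $A$ from the compact exceptional variety $\cE$ (needed so that $\Crit(f)$ cannot leak into a neighbourhood of $\cE$), and the fact, flagged above, that $g$ must be built to be critical along the entire contracted set $S=\pi(\cE)$ and not merely at $\pi(A)$; the prescription of jets in Theorem \ref{th:mainbis} is precisely the tool that makes this possible.
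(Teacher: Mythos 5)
Your argument is correct and follows essentially the paper's own route: push everything down by the Remmert reduction $\pi\colon X\to Y$ and pull back a function on the Stein space $Y$ furnished by Theorems \ref{th:main} and \ref{th:mainbis}, where for the second part one prescribes second-order (critical) jets on the discrete set $\pi(A\setminus\cE)\cup\pi(\cE)$ so that criticality propagates to all of $\cE$ via $\pi^*\mgot_{\pi(x)}^2\subset\mgot_x^2$. One side remark: your motivational appeal to observation (*), claiming that $g$ noncritical at a point of $S=\pi(\cE)$ pulls back to $f=g\circ\pi$ noncritical at the corresponding fibre points, is not accurate as stated (see Example \ref{ex1}, where the pullback of the noncritical function $z_1$ is critical along a hyperplane in $\cE$), but nothing in your actual verification of $\Crit(f)=A\cup\cE$ depends on it.
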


In general we can not find a holomorphic function $f\in \cO(X)$ on a $1$-convex manifold $X$ that is noncritical at 
every point of the exceptional variety $\cE$ of $X$. Indeed, assume that $E$ is a smooth component of $\cE$. 
Since $E$ is compact, the restriction $f|_E$ is constant, so the differential of $f$ vanishes along $E$ in the directions 
tangential to $E$. Hence, if $df_x\ne 0$ for all $x\in E$, the differential defines a nowhere vanishing section of the 
conormal bundle of $E$ in $X$, a nontrivial condition which does not always hold as is seen in the following example.

\begin{example} 
\label{ex1}
Fix an integer $n>1$. Let $X$ be $\C^n$ blown up at the origin, and let $\pi\colon X\to \C^n$ denote the base point projection. The
 exceptional variety is $\cE=\pi^{-1}(0) \cong \C\P^{n-1}$. The conormal bundle of $\cE$ is the line bundle $\cO_{\C\P^{n-1}}(+1)$ 
which does not admit any nonvanishing sections, so $X$ does not admit any noncritical holomorphic functions. On the other hand, the 
function $g(z)=z_1^2+z_2^2+\cdots+z_n^2$ on $\C^n$, with $\Crit(g)=\{0\}$, pulls back to a holomorphic function 
$f=g\circ \pi\in\cO(X)$ with $\Crit(f)=\cE$. Similary, the coordinate function $z_j$ on $\C^n$ pulls back to a holomorphic 
function $z_j\circ \pi=\pi_j$ which is noncritical on $X\setminus \cE\cong \C^n\setminus\{0\}$, and 
\[
	\Crit(\pi_j) = \{[z_1\colon z_2 \colon \cdots \colon z_{n}] \in \cE\colon z_j=0\} \cong \C\P^{n-2}.   
\]
Hence the critical locus may be a proper subvariety of the exceptional variety. 
\qed\end{example}

\begin{problem}
Let $X$ be a $1$-convex manifold. Which closed analytic subsets of its exceptional variety $\cE$ are critical loci of 
holomorphic functions on $X$? 
\end{problem}

Going a step further, recall that a complex space $X$ is said to be {\em holomorphically convex} if for any compact set 
$K\subset X$ its  $\cO(X)$-convex hull 
\[
	\widehat K_{\cO(X)} =\{x\in X\colon |f(x)|\le \sup_K |f|\ \ \forall f\in \cO(X)\}
\]
is also compact. This class contains all $1$-convex spaces, but many more. For example, the total space of any holomorphic 
fiber bundle $X\to Y$ with a compact fiber over a Stein space $Y$ is holomorphically convex.  
By Remmert \cite{Remmert}, every holomorphically convex space $X$ admits a proper holomorphic surjection $\pi\colon X\to Y$ 
onto a Stein space $Y$ such that the (compact) fibers of $\pi$ are connected, $\pi_* \cO_X = \cO_Y$, the map $f\mapsto f\circ \pi$ 
is an isomorphism of $\cO(Y)$ onto $\cO(X)$, and every holomorphic map $X\to S$ to a Stein space $S$ factors through $\pi$. 
If $g\in\cO(Y)$ is a noncritical function on $Y$ furnished by Theorem \ref{th:main}, then the function $f = g\circ \pi\in \cO(X)$ 
is noncritical on the set where $\pi$ is a submersion. What else  could be said? 

Another possible line of investigation is the following. In \cite{FF:Acta} we proved that on any Stein manifold $X$ of dimension 
$n$ there exist $q=\left[ \frac{n+1}{2} \right]$ holomorphic functions $f_1,\ldots,f_q\in \cO(X)$ with pointwise independent 
differentials, i.e., such that $df_1 \wedge df_2\wedge\cdots \wedge df_q$ is a nowhere vanishing holomorphic $(q,0)$-form 
on $X$, and this number $q$ is maximal in general by topological reasons. Furthermore, we have the h-principle for holomorphic 
submersions $X\to\C^q$ any $q<n=\dim X$, saying that every $q$-tuple of pointwise linearly independent continuous $(1,0)$-forms
 can be deformed to a $q$-tuple of linearly independent holomorphic differentials $df_1,\ldots, df_q$. What could be said regarding 
 this problem on Stein spaces? For example:

\begin{problem}
Assume that $X$ is a pure $n$-dimensional Stein space 
and let $q$ be as above. Do there exist functions $f_1,\ldots,f_q\in \cO(X)$ 
such that  $df_1 \wedge df_2\wedge\cdots \wedge df_q$ is nowhere vanishing on 
$X_\reg$? What is the answer if $X$ has only isolated singularities?
\end{problem}

Our methods strongly rely on the fact that the critical locus of a generic holomorphic function on a Stein space is discrete 
(see \S\ref{sec:prel}). If $q>1$ then the set $df_1 \wedge df_2\wedge\cdots \wedge df_q=0$ (if nonempty) is a subvariety of 
complex dimension $\ge q-1>0$, and we do not know how to ensure nonvanishing of this form on a deleted neighborhood of 
a subvariety of $X$ as in the case $q=1$. The problem seems nontrivial even for an isolated singular point of $X$.


\section{Critical points of a holomorphic function on a complex space}   \label{sec:prel}

We begin by recalling certain basic facts of complex analytic geometry. 

Let $(X,\cO_X)$ be a reduced complex space. Following standard practice we shall simply write $X$ in the sequel. We denote by 
$\cO(X)\cong \Gamma(X,\cO_X)$ the algebra of all holomorphic functions on $X$. Given a holomorphic function $f$ on an open 
set $U\subset X$, we denote by $f_p\in \cO_{X,p}$ the germ of $f$ at a point $p\in U$. Similarly, $X_p$ stands for the germ of 
$X$ at a point $p\in X$. 

By $\mgot_p=\mgot_{X,p}$ we denote the maximal ideal of the local ring $\cO_{X,p}$, so $\cO_{X,p}/\mgot_p\cong\C$. 
We say that $f\in\cO_{X,p}$ {\em vanishes to order $k \in \N$ at the point $p$} if $f\in\mgot_p^{k}$ (the $k$-th power of the
 maximal ideal). The quotient ring $\cO_{X,p}/\mgot_p^{k}\cong \C \oplus \mgot_p/\mgot_p^{k}$ is a finite dimensional complex
vector space, called the {\em space of $(k-1)$-jets of holomorphic functions on $X$ at $p$}. Recall that  
$\mgot_p/\mgot_p^2\cong T_p^*X$ is the Zariski cotangent space and its dual $(\mgot_p/\mgot_p^2)^* \cong T_p X$ is the 
Zariski tangent space of $X$ at $p$.

If $X'$ is a complex subvariety of $X$ and $p\in X'$, then the maximal ideal $\mgot_{X',p}$ of the ring $\cO_{X',p}$ 
consists of all germs at $p$ of restrictions $f|_{X'}$ with $f\in \mgot_{X,p}$.

\begin{lemma} \label{lem:germs}
Let $X'$ be a closed complex subvariety of a complex space $X$ and $p\in X'$. If $f\in \cO_{X',p}$ and $h\in \cO_{X,p}$ 
are such that $f-(h|_{X'})_p \in \mgot_{X',p}^{k}$ for some $k\in\N$, then there exists $\tilde h \in \cO_{X,p}$ 
such that $\tilde h -h\in \mgot_{X,p}^{k}$ and $(\tilde h|_{X'})_p = f \in \cO_{X',p}$.  
\end{lemma}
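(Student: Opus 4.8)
The plan is to reduce the statement to the surjectivity of the restriction map $\cO_{X,p}\to\cO_{X',p}$ together with a bookkeeping argument on powers of the maximal ideal. First I would recall that if $X'$ is defined near $p$ by a coherent ideal sheaf $\cI\subset\cO_X$, then $\cO_{X',p}\cong\cO_{X,p}/\cI_p$, and the restriction homomorphism $\rho\colon\cO_{X,p}\to\cO_{X',p}$ is the quotient map; in particular it is surjective. Hence there is some $g\in\cO_{X,p}$ with $\rho(g)=f$. The task is then to correct $h$ by an element of $\mgot_{X,p}^k$ so that its restriction becomes exactly $f$ (not merely congruent to $f$ modulo $\mgot_{X',p}^k$).

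The key identity I would use is that the restriction map sends $\mgot_{X,p}^k$ onto $\mgot_{X',p}^k$. This is immediate from the description in the paragraph preceding the lemma: $\mgot_{X',p}$ is the image of $\mgot_{X,p}$ under $\rho$, and $\rho$ is a ring homomorphism, so $\rho(\mgot_{X,p}^k)=\rho(\mgot_{X,p})^k=\mgot_{X',p}^k$. Now set $e:=f-\rho(h)\in\mgot_{X',p}^k$ by hypothesis. By the surjectivity just noted, choose $\tilde e\in\mgot_{X,p}^k$ with $\rho(\tilde e)=e$. Then $\tilde h:=h+\tilde e$ satisfies $\tilde h-h=\tilde e\in\mgot_{X,p}^k$ and $\rho(\tilde h)=\rho(h)+e=f$, which is exactly what is required.

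I do not anticipate a genuine obstacle here — the statement is essentially a restatement of the right-exactness of $\cO_{X,p}\twoheadrightarrow\cO_{X',p}$ and the compatibility of this quotient with the maximal-ideal filtrations. The one point that deserves a line of justification is the claim $\rho(\mgot_{X,p}^k)=\mgot_{X',p}^k$: the inclusion $\subseteq$ is obvious, and $\supseteq$ follows because any generator of $\mgot_{X',p}^k$ is a finite sum of $k$-fold products of elements of $\mgot_{X',p}$, each of which lifts to $\mgot_{X,p}$ by definition of $\mgot_{X',p}$ as the image of $\mgot_{X,p}$, and products of lifts restrict to the given products. If one prefers, the whole argument can be phrased purely ideal-theoretically: $\cO_{X',p}=\cO_{X,p}/\cI_p$ with $\cI_p\subset\mgot_{X,p}$, the preimage of $\mgot_{X',p}$ is $\mgot_{X,p}$, and the preimage of $\mgot_{X',p}^k$ is $\mgot_{X,p}^k+\cI_p=\mgot_{X,p}^k$ (the last equality because $\cI_p\subset\mgot_{X,p}^k$ fails in general, so more carefully: the preimage of $\mgot_{X',p}^k$ equals $\mgot_{X,p}^k+\cI_p$, and since $\cI_p\subseteq\mgot_{X,p}$ we still get that every element of $\mgot_{X,p}^k+\cI_p$ restricting into $\mgot_{X',p}^k$ is accounted for by an element of $\mgot_{X,p}^k$). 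Either way the correction term $\tilde e$ is produced and the lemma follows in two lines.
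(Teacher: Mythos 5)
Your argument is correct and is essentially the paper's own proof: both write the error $f-(h|_{X'})_p\in\mgot_{X',p}^{k}$ as a finite sum of $k$-fold products of elements of $\mgot_{X',p}$, lift each factor to $\mgot_{X,p}$, and add the lifted sum to $h$, i.e.\ both rest on the surjectivity of $\mgot_{X,p}^{k}\to\mgot_{X',p}^{k}$ under restriction. The parenthetical ideal-theoretic aside at the end is garbled (and, as you note, $\cI_p\subset\mgot_{X,p}^{k}$ can fail), but it is not needed since your main argument already produces the correction term.
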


\begin{proof}
The conditions imply that $f = (h|_{X'})_p + \sum_j \xi_{j,1}\xi_{j,2}\cdots \xi_{j,k}$ where $\xi_{j,i}\in \mgot_{X',p}$ 
for all $i$ and $j$. Then $\xi_{j,i}=\tilde \xi_{j,i}|_{X'}$ for some $\tilde \xi_j\in\mgot_{X,p}$, and the germ 
$\tilde h = h +\sum_j \tilde \xi_{j,1}\tilde \xi_{j,2}\cdots \tilde \xi_{j,k} \in\cO_{X,p}$ satisfies the stated properties.
\end{proof}

Given a function $f\in\cO(X)$, the collection of its differentials $df_x\colon T_x X\to\C$ over all points $x\in X$ defines the 
{\em tangent map} $Tf\colon TX\to X\times \C$ on the {\em tangent space}  $TX=\bigcup_{x\in X} T_x X$. Recall that 
$TX$ carries the structure of a not necessarily reduced {\em linear space} over $X$ such that the tangent map $Tf$ is holomorphic. 
Here is a local decription of $TX$ (see e.g.\ \cite[Chapter 2]{Fischer}). Assume that $X$ is a closed complex subvariety of an open 
set $U\subset \C^N$, defined by holomorphic functions $h_1,\ldots,h_m \in \cO(U)$ which generate the sheaf of ideals $\cJ_X$ 
of $X$ (hence $\cO_{X}\cong (\cO_U/\cJ_X)|_X$). Let $(z_1,\ldots,z_N,\xi_1,\ldots,\xi_N)$ be complex coordinates on 
$U\times \C^N$. Then $TX$ is the closed complex subspace of $U\times \C^N$ generated by the functions
\begin{equation}
\label{eq:TX}
	h_1,\ldots,h_m\ \ {\rm and}\ \
   \frac{\partial h_i}{\partial z_1}\,\xi_1 + \cdots + \frac{\partial h_i}{\partial z_N}\,\xi_N
   \ \ {\rm for}\ \ i=1,\ldots,m.                      
\end{equation}
This means that $TX$ is the common zero set of the above functions and it structure sheaf $\cO_{TX}$ is the quotient of 
$\cO_{U\times \C^N}$ by the ideal generated by them. 
The projection $TX\to X$ is the restriction of the projection $U\times \C^N\to U$, $(z,\xi)\mapsto z$. Different local representations 
of $X$ give isomorphic representations of $TX$. If $X$ is a complex manifold then $TX$ is the usual tangent bundle of $X$; 
this holds in particular over the regular locus $X_{\reg}$ of any complex space. 

Since the critical locus $\Crit(f)$ of a holomorphic function $f\in\cO(X)$ is the set of points $x\in X$ at which the differential 
$df_x\colon T_x X\to\C$ vanishes, one might expect that $\Crit(f)$ is a closed complex subvariety of $X$. This is clearly true if
 $X$ is a complex manifold (in particular, it holds on the regular locus $X_\reg$ of any complex space), but it fails in general 
 near singularities. Furthermore, unlike in the smooth case, the set of (strongly) noncritical holomorphic functions is not stable 
 under small deformations. The following examples illustrates these phenomena in a simple setting of an irreducible quadratic 
 surface singularity in $\C^3$.

\begin{example} \label{ex:null}
Let $A$ be the subvariety of $\C^3$ given by 
\begin{equation}
\label{eq:null}
	A=\{(z_1,z_2,z_3) \in \C^3 \colon h(z)=z_1^2+z_2^2+z_3^2=0\}.
\end{equation}
(In the theory of minimal surfaces this is called the {\em null quadric}, and a complex curve in $\C^3$ whose  derivative belongs 
to $A^*=A\setminus \{(0,0,0)\}$ is said to be a (immersed) {\em null holomorphic curve}. Such curves are related to conformally 
immersed minimal surfaces in $\R^3$. See e.g.\ \cite{Osserman} for a classical survey of this subject and \cite{AlF3} for some 
recent results.) Clearly $A_\sing =\{(0,0,0)\}$, $A$ is locally and globally irreducible, and $T_{(0,0,0)}A=\C^3$. For any 
$\lambda=(\lambda_1,\lambda_2,\lambda_3)\in\C^3\setminus\{(0,0,0)\}$ the linear function 
\[
   f_\lambda (z_1,z_2,z_3)= \lambda_1 z_1 + \lambda_2 z_2 + \lambda_3 z_3
\]
restricted to $A$ is strongly noncritical at $(0,0,0)$. Clearly $df_\lambda$ is colinear with $dh=2(z_1dz_1+z_2dz_2+z_3dz_3)$ 
precisely along the complex line $\Lambda=\C\lambda= \{t\lambda\colon t\in \C\}$. If $\lambda\in A^*$, it follows that 
$\Crit({f_\lambda}|_A)=\Lambda\setminus \{0\}$ which is not closed. An explicit example is obtained by taking 
\[
	\lambda=(1,\imath,0) \in A^*,\qquad f(z)=z_1+\imath z_2. 
\]
(Here $\imath=\sqrt{-1}$.) 
\qed\end{example}

Let us now show on the same example that the set of (strongly) noncritical functions fails to be stable under small deformations.

\begin{example}
\label{ex:null2}
Let $A$ be the quadric (\ref{eq:null}). Consider the family of functions
\[
	f_\epsilon(z_1,z_2,z_3)= z_1+ z_1(z_1-2\epsilon) + \imath z_2 ,\qquad \epsilon\in\C.
\]
Since $(df_\epsilon)_0=(1-2\epsilon)dz_1+\imath dz_2$, $f_\epsilon|_A$ is (strongly) noncritical at the origin for any $\epsilon\in\C$. 
A calculation shows that for $\epsilon\ne 1/2$ the differentials $df_\epsilon$ and $dh$ (considered on the tangent bundle $T\C^3$)
are colinear precisely at points of the complex curve 
\[
	C_\epsilon =\{(z_1,z_2,0)\in\C^3: z_2=\imath z_1/(2z_1-2\epsilon+1)\}. 
\]
This curve intersects the quadric $A$ at the following four points:
\[
		A\cap C_\epsilon =\{(0,0,0), (\epsilon,\imath\epsilon,0), (\epsilon-1,-\imath(\epsilon-1),0)\}.
\]
Hence the second and the third of these points are the critical points of $f_\epsilon|_A$ when $ \epsilon\notin\{0,1\}$. 
For $\epsilon$ close to $0$ the point $(\epsilon,\imath\epsilon,0)$ lies close to the origin, while the third point is close to 
$(-1,\imath,0)$.  Hence the function $f_0|_A$ is noncritical on the intersection of $A$ with the ball of radius $1/2$ 
around the origin in $\C^3$, but $f_\epsilon|_A$ for small $\epsilon\ne 0$ is close to $f_0$ and has a critical point 
$(\epsilon,\imath\epsilon,0)\in A$ near the origin.
\qed\end{example}

Although we have seen in Example \ref{ex:null} that $\Crit(f)$ need not be a closed complex subvariety near singular 
points of a complex space, we still have the following result.

\begin{lemma}
\label{lem:crit}
Let $f$ be a holomorphic function on a complex space $X$. If $X'\subset X$ is a closed complex subvariety of $X$ 
containing the singular locus $X_\sing$ of $X$, then the set
\[
			C_{X'}(f) := \{x\in X_\reg\colon df_x=0\} \cup {X'}
\]
is a closed complex subvariety of $X$.
\end{lemma}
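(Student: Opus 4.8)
The plan is to work locally: since being a closed complex subvariety is a local property and $X_{\mathrm{sing}}\subset X'$, it suffices to show that for every point $p\in X'$ the germ of $C_{X'}(f)$ at $p$ coincides with the germ of a closed complex subvariety. Away from $X'$ the statement is clear, because on the complex manifold $X_{\mathrm{reg}}$ the critical locus $\{df_x=0\}$ is cut out by the vanishing of the partial derivatives of $f$ in local holomorphic coordinates and is therefore a closed subvariety of $X_{\mathrm{reg}}$; thus the only issue is at points of $X'$, and more delicately at points of $X_{\mathrm{sing}}$. So fix $p\in X'$ and embed a neighborhood of $p$ in $X$ as a closed complex subvariety of an open set $U\subset\C^N$, with ideal sheaf generated by $h_1,\dots,h_m\in\cO(U)$, and extend $f$ to a holomorphic function $\tilde f\in\cO(U)$.

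The key step is to exhibit a closed complex subvariety $Z\subset U$ whose intersection with a neighborhood of $p$ in $X$ equals $C_{X'}(f)$ near $p$. The natural candidate is built from the Jacobian condition that $d\tilde f$ be a linear combination of $dh_1,\dots,dh_m$ along $X$; concretely, let $r=N-\dim_p X$ be the expected codimension and consider the analytic set
\[
	W=\Bigl\{z\in U\colon \mathrm{rank}\Bigl(\frac{\partial(\tilde f,h_1,\dots,h_m)}{\partial(z_1,\dots,z_N)}\Bigr)(z)\le \mathrm{rank}\Bigl(\frac{\partial(h_1,\dots,h_m)}{\partial(z_1,\dots,z_N)}\Bigr)(z)\Bigr\},
\]
which is closed and complex analytic in $U$ (it is defined by the vanishing of finitely many holomorphic determinantal functions, via the standard device of comparing minors). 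At a point $x\in X_{\mathrm{reg}}$ the Jacobian of $(h_1,\dots,h_m)$ has locally constant rank $r$, and $x\in W$ exactly when adjoining the row $d\tilde f_x$ does not increase the rank, i.e.\ exactly when $d\tilde f_x$ vanishes on $T_xX=\ker(dh_j)_x$; hence $W\cap X_{\mathrm{reg}}=\{x\in X_{\mathrm{reg}}\colon df_x=0\}$. Therefore $Z:=(W\cap X)\cup X'$ is a closed complex subvariety of a neighborhood of $p$ in $X$, and
\[
	Z\setminus X' = (W\cap X)\setminus X' = (W\cap X_{\mathrm{reg}})\setminus X' = \{x\in X_{\mathrm{reg}}\colon df_x=0\}\setminus X' = C_{X'}(f)\setminus X'.
\]
Since $Z\supset X'$ as well, $Z$ agrees with $C_{X'}(f)$ near $p$, which proves the lemma. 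Finally, the globally defined set $C_{X'}(f)$ is closed: a point in its closure lies either in $X'$ (closed) or has a neighborhood missing $X'$, where $C_{X'}(f)$ coincides with the closed set $\{df=0\}$ in $X_{\mathrm{reg}}$; closedness then follows from the local description just obtained.

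The main obstacle is the behavior along $X_{\mathrm{sing}}$, where $W$ itself may behave badly and where the inclusion $W\cap X\subset X_{\mathrm{reg}}\cup X_{\mathrm{sing}}$ forces us to \emph{add in} $X'$ by hand rather than hoping $W\cap X$ is already a variety of the right shape; the point of the hypothesis $X_{\mathrm{sing}}\subset X'$ is precisely that the only part of $W\cap X$ we cannot control, namely its intersection with $X_{\mathrm{sing}}$, gets absorbed into the variety $X'$ we are taking the union with. One should also double-check that the determinantal description of $W$ is independent of the chosen embedding and of the chosen holomorphic extension $\tilde f$ of $f$ — but since the final conclusion is phrased intrinsically in terms of $df_x$ on $X_{\mathrm{reg}}$, it is enough that $W\cap X_{\mathrm{reg}}$ has the stated intrinsic description, which it does.
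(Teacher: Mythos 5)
Your reduction to a local statement and the idea of absorbing the bad set into $X'$ are fine, but the key analyticity claim is wrong as stated: the set
$W=\{z\in U\colon \rank\,\partial(\tilde f,h_1,\dots,h_m)/\partial z\,(z)\le \rank\,\partial(h_1,\dots,h_m)/\partial z\,(z)\}$
is in general \emph{not} a complex analytic subset of $U$, because the right-hand rank varies with $z$. A condition of the form $\rank A(z)\le \rank B(z)$ is only constructible (a locus $\{\rank A\le k\}$ for \emph{fixed} $k$ is cut out by minors, but $\{\rank B\ge k\}$ is Zariski open), and there is no ``standard device of comparing minors'' that makes the combined condition analytic. The paper's own Example \ref{ex:null} is a direct counterexample: for $X=\{h=z_1^2+z_2^2+z_3^2=0\}$, $\tilde f=z_1+\imath z_2$, $m=1$, one gets $W=\C\lambda\setminus\{0\}$ with $\lambda=(1,\imath,0)$, because at the origin $\rank(dh)$ drops to $0$ while the augmented matrix has rank $1$; this $W$ is not even closed. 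Since your only justification that $Z=(W\cap X)\cup X'$ is analytic is the (false) analyticity of $W$, the proof of the lemma at points of $X_\sing$ — which is the entire content of the lemma — is missing; set-theoretically your $Z$ does equal $C_{X'}(f)$ near $p$, but that was never in doubt.

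The gap is repairable within your framework: fix the size of the minors rather than comparing two varying ranks. If $X$ is pure $n$-dimensional near $p$, set $r=N-n$ and let $W'$ be the common zero set of all $(r+1)\times(r+1)$ minors of $\partial(\tilde f,h_1,\dots,h_m)/\partial z$; since $h_1,\dots,h_m$ generate $\cJ_X$, at every regular point the Jacobian of the $h_i$ has rank exactly $r$, so $W'\cap X_\reg=\Crit(f|_{X_\reg})$ near $p$, and $W'$ \emph{is} analytic, whence $(W'\cap X)\cup X'=C_{X'}(f)$ is analytic near $p$. When local irreducible components of different dimensions occur one must do this for each pure-dimensional part separately (the loci $X^{(d)}$ are analytic and their mutual intersections lie in $X_\sing\subset X'$), which adds bookkeeping. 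This is genuinely different from the paper's argument, which avoids all local linear algebra: there one takes a desingularization $\pi\colon M\to X$, notes that $\Crit(f\circ\pi)\cup\pi^{-1}(X')$ is analytic in the manifold $M$, and pushes it forward by Remmert's proper mapping theorem, using that $\pi$ is biholomorphic over $X_\reg$. The desingularization route is shorter and handles mixed dimensions for free; your route, once corrected, is more elementary in that it uses no big theorems beyond coherence of $\cJ_X$.
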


\begin{proof}
By the desingularization theorem \cite{AHV,BM,Hironaka} there are a complex manifold $M$ and a proper holomorphic surjection 
$\pi\colon M\to X$ such that $\pi\colon M\setminus \pi^{-1}(X_{\rm sing})\to X\setminus X_{\rm sing}$ is a biholomorphism and 
$\pi^{-1}(X_{\rm sing})$ is a compact complex hypersurface in $M$. Given a function $f\in\cO(X)$, consider the function 
$F=f\circ\pi\in\cO(M)$ and the subvariety $M'=\pi^{-1}(X')$ of $M$. Since $M$ is a complex manifold, the critical locus 
$\Crit(F)\subset M$ is a closed complex subvariety of $M$, and hence so is the set $C_{M'}(F)=\Crit(F)\cup M'$. 
As $\pi$ is proper, $\pi(C_{M'}(F))$ is a closed complex subvariety of $X$ according to Remmert \cite{Remmert2}. 
Since $\pi$ is biholomorphic over $X_\reg$, we have that $\pi(C_{M'}(F))=C_{X'}(f)$ which proves the result. 
\end{proof}

In spite of the lack of stability of noncritical functions, illustrated by Example \ref{ex:null2}, we shall obtain a certain stability 
result (cf.\ Lemma \ref{lem:stability} below) which will be used in the construction of stratified noncritical holomorphic functions 
on Stein spaces.

Given a compact set $K$ in a complex space $X$, we denote by $\cO(K)$ the space of all functions $f$ that are holomorphic on 
an open neighborhood $U_f \subset X$ of $K$ (depending on the function), identifying two function that agree on some neighborhood 
of $K$. By $\mathring K$ we denote the topological interior of a set $K$.

For any coherent analytic sheaf $\cF$ on a complex space $X$ the $\cO(X)$-module $\cF(X)=\Gamma(X,\cF)$ of all global 
sections of $\cF$ over $X$ can be endowed with a Fr\'echet space topology (the topology of uniform convergence on compacts 
in $X$) such that for every point $x\in X$ the natural restriction map $\cF(X)\mapsto \cF_x$ is continuous 
(see Theorem 5 in \cite[p.\ 167]{Grauert-Remmert1979}). The topology on the stalks $\cF_x$ is the 
{\em sequence topology} (cf.\ \cite[p.\ 86ff]{GR-Stellenalgebren}). Thus every set of the second category in $\cF(X)$ 
(an intersection of at most countably many open dense sets) is dense in $\cF(X)$. The expression {\em generic holomorphic function}
 on $X$ will always mean a function in a certain set of the second category in $\cO(X)$, and likewise for $\cF(X)$. 

If $\cS$ is a coherent subsheaf of a coherent sheaf $\cF$ over $X$ then $\cS(X)$ is a closed submodule of $\cF(X)$ 
(the Closedness Theorem, cf.\ \cite[p.\ 169]{Grauert-Remmert1979}). Since every $\cO_{X,x}$-submodule $M$ of the module 
$\cF_x$ is closed in the sequence topology, it follows that $\{f \in \cF(X)\colon f_x\in M\}$ is a closed subspace of $\cF(X)$, 
hence a Fr\'echet space. 

In particular, if $X'$ is a closed complex subvariety of a complex space $X$ and $\cJ_{X'}$ is the sheaf of ideals of $X'$ 
(a coherent subsheaf of $\cO_X$), then 
\[
	\cJ(X'):= \Gamma(X,\cJ_{X'}) = \{f\in \cO(X)\colon f|_{X'}=0\}
\]
is a closed (hence Fr\'echet) ideal in $\cO(X)$. Given a function $g\in \cO(X')$ on a closed complex subvariety $X'\subset X$, the set
\begin{equation} \label{eq:Xprimeg}
	\cO_{X',g}(X)=\{f\in \cO(X)\colon f|_{X'}=g\}
\end{equation}
is a closed affine subspace of $\cO(X)$ and hence a Baire space. 

The Closedness Theorem \cite[p.\ 169]{Grauert-Remmert1979} shows that for any point $x\in X$ and $k\in\N$ the set 
\[
	\{f\in\cO(X)\colon f_x - f(x) \in \mgot_x^k\}
\]
is closed in $\cO(X)$. For $k=2$ this is the set of functions with a critical point at $x$. 

Let $X_x=\bigcup_{j=1}^m V_j$ be a decomposition into local irreducible components at a point $x\in X$. 
According to Definition \ref{def:critical}, a function $f\in\cO_{X,x}$ fails to be strongly noncritical at $x$ if there is a 
$j\in\{1,\ldots,m\}$ such that $(f|_{V_j})_x-f(x)\in \mgot_{V_j,x}^2$. This defines a closed subset of $\cO_{X,x}$, 
so the set of all strongly noncritical germs is open in $\cO_{X,x}$. Since the restrictions maps in the space of sections 
of a coherent sheaf are continuous, we get the following conclusion.

\begin{lemma}\label{lem:stability0}
The set of all functions $f\in\cO(X)$ which are noncritical (or strongly noncritical) at a certain point $x\in X$ is open in $\cO(X)$.    
\end{lemma}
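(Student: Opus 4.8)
The plan is to reduce the statement to the closedness of certain ideals in the local ring $\cO_{X,x}$, pulled back along the continuous germ map. We may assume that $x$ is not an isolated point of $X$, since otherwise every function is strongly noncritical at $x$ and the set in question is all of $\cO(X)$. I would first record, from the preliminaries, that the map $\cO(X)\to\cO_{X,x}$, $f\mapsto f_x$, is continuous (it is the natural map from $\Gamma(X,\cO_X)$ to the stalk of the coherent sheaf $\cO_X$), that $f\mapsto f(x)$ is continuous, and that every $\cO_{X,x}$-submodule of $\cO_{X,x}$ is closed in the sequence topology; consequently the map $\Phi\colon\cO(X)\to\cO_{X,x}$, $\Phi(f)=f_x-f(x)\cdot 1$, is continuous.

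For the noncritical case there is then essentially nothing to do: by Definition \ref{def:critical}, $f$ is noncritical at $x$ precisely when $\Phi(f)\notin\mgot_x^2$, so this set equals $\Phi^{-1}(\cO_{X,x}\setminus\mgot_x^2)$ and is open because the ideal $\mgot_x^2$ is closed in $\cO_{X,x}$. (Equivalently, this is the complement of $\{f\colon f_x-f(x)\in\mgot_x^2\}$, already noted above to be closed by the Closedness Theorem.) For the strongly noncritical case, let $X_x=\bigcup_{j=1}^m V_j$ be the decomposition into local irreducible components and let $\mathfrak{p}_j\subset\cO_{X,x}$ be the corresponding prime ideal, so that $\cO_{V_j,x}=\cO_{X,x}/\mathfrak{p}_j$ and hence $\mgot_{V_j,x}^2=(\mgot_x^2+\mathfrak{p}_j)/\mathfrak{p}_j$. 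Then $f|_{V_j}$ is noncritical at $x$ if and only if $\Phi(f)\notin\mgot_x^2+\mathfrak{p}_j$, so that the set of functions which are \emph{not} strongly noncritical at $x$ is
\[
	\bigcup_{j=1}^m \Phi^{-1}\bigl(\mgot_x^2+\mathfrak{p}_j\bigr),
\]
a finite union of preimages of closed ideals, hence closed; its complement, the set of functions strongly noncritical at $x$, is therefore open.

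I do not expect a genuine obstacle here: the argument is essentially bookkeeping with the Fréchet topology on $\cO(X)$ and the sequence topology on $\cO_{X,x}$. The only points that require a line of justification are the identification of the relevant subsets of $\cO_{X,x}$ with closed submodules (ideals) — which for the strongly noncritical case amounts to the elementary computation $\mgot_{V_j,x}^2=(\mgot_x^2+\mathfrak{p}_j)/\mathfrak{p}_j$ — together with the continuity of the germ-restriction map $\cO(X)\to\cO_{X,x}$, both of which are exactly the standard facts about coherent sheaves and analytic local algebras recalled before the statement.
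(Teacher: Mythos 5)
Your argument is correct and follows essentially the same route as the paper: continuity of the germ map $\cO(X)\to\cO_{X,x}$ together with closedness of $\cO_{X,x}$-submodules in the sequence topology (the Closedness Theorem), with the strongly noncritical case reduced to a finite union over the local irreducible components. Your explicit identification $\mgot_{V_j,x}^2=(\mgot_x^2+\mathfrak{p}_j)/\mathfrak{p}_j$ merely spells out the step the paper phrases at the stalk level before pulling back to $\cO(X)$.
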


However, Example \ref{ex:null2} above shows that the set of functions $f\in\cO(X)$ that are noncritical (or strongly noncritical) 
on a certain compact set $K\subset X$ may fail to be open in $\cO(X)$, unless $K$ is contained in the regular locus $X_\reg$.

The following result is \cite[Lemma 3.1, p.\ 52]{FP3} in the case that $X$ is a Stein manifold; 
we shall need it also when $X$ is a Stein space. (We correct a misprint in the original source.)
%
%
\begin{lemma}[{\bf Bounded extension operator}] 
\label{lem:FP3}
Let $X$ be a Stein space, $X'$ be a closed complex subvariety of $X$, and $\Omega\Subset X$ be a Stein domain
in $X$. For any relatively compact subdomain $D\Subset \Omega$ there exists a bounded linear extension
operator $T\colon \cH^\infty(\Omega\cap X') \to \cH^\infty(D)$ such that 
\[
	(Tf)(x)=f(x)\quad \forall f\in  \cH^\infty(\Omega\cap X'),\ \forall x\in D\cap X'.
\]
\end{lemma}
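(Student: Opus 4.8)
I want to prove the Bounded Extension Operator lemma (Lemma \ref{lem:FP3}) for Stein spaces. The strategy is to reduce to the manifold case, which is already known by \cite{FP3}, using a proper holomorphic embedding of a relatively compact piece of $X$ into Euclidean space together with a holomorphic retraction onto its image. More precisely, first I would shrink: pick Stein domains $D \Subset \Omega' \Subset \Omega$ with $\Omega'$ Stein. Since $X$ is Stein and $\Omega'$ is relatively compact, there is a proper holomorphic embedding $\iota\colon \Omega'' \hookrightarrow \C^N$ of a slightly larger Stein domain $\Omega''$ with $\Omega' \Subset \Omega'' \Subset \Omega$, realizing $\iota(\Omega'')$ as a closed complex subvariety $Z$ of an open set in $\C^N$. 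By Siu's theorem (or Docquier–Grauert in the embedded setting), a Stein subvariety has a Stein open neighborhood $W \subset \C^N$ and a holomorphic retraction $\rho\colon W \to Z$; composing, we get that a neighborhood of $\iota(\Omega')$ in $\C^N$ retracts holomorphically onto $\iota(\Omega'')$, and hence $\iota(\Omega' \cap X')$ is a closed complex subvariety of a neighborhood of $\iota(\Omega')$.

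Next I would transport the problem through $\iota$ and $\rho$. A bounded holomorphic function $f$ on $\Omega \cap X'$ restricts to a bounded holomorphic function on $\Omega' \cap X'$, which corresponds under $\iota$ to a bounded holomorphic function on the subvariety $\iota(\Omega' \cap X') \subset \C^N$; pulling back by $\rho$ gives a bounded holomorphic function on a neighborhood of $\iota(\Omega' \cap X')$ in the ambient subvariety $Z$, i.e.\ on (a neighborhood in $Z$ of) the subvariety $\iota(\Omega'\cap X')$ sitting inside the Stein manifold-neighborhood. Here I apply the known manifold case of the lemma, \cite[Lemma 3.1]{FP3}: it yields a bounded linear extension operator from bounded holomorphic functions on the subvariety (restricted to a smaller domain) to bounded holomorphic functions on a relatively compact subdomain. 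The composition of all these maps — restriction $\cH^\infty(\Omega\cap X') \to \cH^\infty(\Omega'\cap X')$, push-forward by $\iota$, pull-back by $\rho$, the \cite{FP3} extension operator, then pull-back by $\iota^{-1}$ and restriction back to $D$ — is linear, and each step is bounded with a constant depending only on the fixed geometric data ($\iota$, $\rho$, the nested domains), not on $f$. Composing gives the desired bounded linear operator $T\colon \cH^\infty(\Omega\cap X') \to \cH^\infty(D)$, and by construction $Tf$ agrees with $f$ on $D \cap X'$.

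The nuance to be careful about is the interface between $X'$ (a subvariety of the possibly singular $X$) and the embedding: under $\iota$, the set $\iota(\Omega' \cap X')$ is a closed complex subvariety of the (possibly singular, but embedded) complex space $Z \cap \iota(\Omega')$, and the retraction $\rho$ from the ambient Euclidean neighborhood maps onto $Z$, not onto $\iota(X')$. So I first extend within the Euclidean neighborhood from $\iota(\Omega' \cap X')$ to a neighborhood in $\C^N$ — this is exactly the manifold statement of \cite{FP3} applied with ``$X$'' $=$ an open subset of $\C^N$ and ``$X'$'' $= \iota(\Omega'\cap X')$ — and only afterwards restrict the ambient extension to $Z$ via $\rho$, or rather restrict it to $\iota(\Omega'')$ and pull back. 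Since $\iota(\Omega'\cap X') \subset \iota(\Omega'') \cap (\text{ambient neighborhood of } X')$, the restriction to $\iota(\Omega'')$ of an ambient extension of $f$ is a holomorphic extension of $f$ to $\iota(\Omega'') = \Omega''$, hence to $D$ after a final restriction. Boundedness is preserved throughout because all maps involved are fixed and proper/bounded on the relevant relatively compact sets.

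**Main obstacle.** I expect the only real subtlety is bookkeeping the nested domains and checking that the relative-compactness hypotheses needed to invoke \cite[Lemma 3.1]{FP3} (which itself requires a relatively compact subdomain) are arranged consistently after passing through the embedding — i.e.\ that $D \Subset \Omega'' \Subset$ (Euclidean neighborhood) $\Subset \Omega$ can all be chosen simultaneously, and that the extension operator's domain really is $\cH^\infty(\Omega\cap X')$ rather than $\cH^\infty(\Omega'\cap X')$; this is handled by the harmless initial restriction $\cH^\infty(\Omega\cap X')\to\cH^\infty(\Omega'\cap X')$, which is trivially bounded and linear. There is no hard analytic content beyond the manifold case; the work is purely in reducing to it via embedding and retraction, both standard for relatively compact pieces of Stein spaces.
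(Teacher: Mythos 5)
Your final plan is essentially the paper's proof: embed a relatively compact Stein piece of $X$ containing $\overline D$ as a closed subvariety of $\C^N$, apply the manifold case \cite[Lemma 3.1]{FP3} in the Euclidean ambient space to the embedded image of $X'$, and then simply restrict the ambient extension back to (the image of) $X$ and to $D$; the initial restriction $\cH^\infty(\Omega\cap X')\to\cH^\infty(\Omega''\cap X')$ is indeed harmless. Two remarks. First, your auxiliary claim that Siu's theorem (or Docquier--Grauert) provides a holomorphic retraction $\rho\colon W\to Z$ is false when $Z=\iota(\Omega'')$ is singular -- which is exactly the case of interest here: Siu \cite{Siu1976} gives only a Stein neighborhood, Docquier--Grauert \cite{Docquier-Grauert} requires a sub\emph{manifold}, and in fact the image of any holomorphic retraction is necessarily smooth, so no such $\rho$ can exist near singular points of $X$. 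This does not damage your proof, because in your corrected third paragraph $\rho$ is never actually used: you extend ambiently via \cite{FP3} and then merely restrict to $\iota(\Omega'')$, which needs no retraction; but the claim should be deleted. Second, a small bookkeeping point you gesture at: to invoke \cite[Lemma 3.1]{FP3} the subvariety must be closed in the ambient Stein domain, and $\iota(\Omega'\cap X')$ is not closed in any open subset of $\C^N$ containing $\iota(\overline{D})$ unless that open set meets $Z$ inside $\iota(\Omega')$; either take the subvariety $\iota(\Omega''\cap X')$, which is closed in $\C^N$ by properness of $\iota$ (the input $f$ is bounded there anyway since $\Omega''\Subset\Omega$), or do as the paper does and use Siu's theorem to produce an ambient Stein domain $\Omega'\Subset\C^N$ with $\Omega'\cap W=\Omega$, which also makes the operator's domain exactly $\cH^\infty(\Omega\cap X')$ with no preliminary restriction.
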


\begin{proof}
Choose a Stein neighborhood $W\Subset X$ of the compact set $\overline\Omega$ and embed it as a closed 
complex subvariety  (still denoted $W$) of some Euclidean space $\C^N$
(see \cite[Theorem 2.2.8]{FF:book} and the references therein). 
By Siu's theorem \cite{Siu1976} there is a Stein domain $\Omega'\Subset \C^N$ such that $\Omega= \Omega'\cap W$. 
Also choose a domain $D'$ in $\C^N$ such that $D\subset D'$ and $\overline D'\subset \Omega'$. 
By \cite[Lemma 3.1]{FP3}, applied with the subvariety $X'\cap W$ 
of the Stein manifold $\C^N$ and domains $D'\Subset \Omega' \Subset \C^N$, there exists a bounded linear extension operator
$T'\colon \cH^\infty(\Omega'\cap X') \to \cH^\infty(D')$. Since $\Omega'\cap W=\Omega$, we obtain by restricting the 
resulting funtion $T'f$ to $D\subset W\cap D'$ a bounded extension operator $T$ as in the lemma.
\end{proof}

\begin{lemma}[{\bf The Stability Lemma}]
\label{lem:stability}
Assume that $X$ is a complex space, $X'\subset X$ is a closed complex subvariety containing $X_\sing$, 
and $K\subset L$ are compact subsets  of $X$ with $K\subset \mathring L$. 
Assume that $f\in\cO(X)$ is noncritical on $L\setminus X'$.  Then there exist an integer 
$r\in \N$ and a number $\epsilon>0$ such that the following holds. If a function $g\in \cO(L)$ satisfies
the conditions
\begin{itemize}
\item[\rm (i)] $f-g \in \Gamma(L,\cJ^r_{X'})$, where $\cJ_{X'}^r$ is the $r$-th power of the ideal sheaf $\cJ_{X'}$, and
\item[\rm (ii)] $||f-g||_L := \sup_{x\in L}|f(x)-g(x)| <\epsilon$,
\end{itemize}   
then $g$ has no critical points on $K\setminus X'$.
\end{lemma}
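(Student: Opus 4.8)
\emph{The plan} is to argue by contradiction and transfer the problem to a resolution of singularities of $X$, on which $X_\sing$ --- hence $X'$ --- becomes a normal crossing divisor and the ideal sheaf $\cJ_{X'}$ becomes locally a monomial ideal; there one can estimate the differential of a function vanishing to high order along $X'$ by the maximum principle and play it against the \L ojasiewicz inequality for $\|dF\|^2$. Concretely: if the lemma failed, applying the negation of its statement with $r=n$ and $\epsilon=1/n$ would give, for every $n\in\N$, a function $g_n\in\cO(L)$ with $f-g_n\in\Gamma(L,\cJ^n_{X'})$, $\|f-g_n\|_L<1/n$, and a critical point $x_n\in K\setminus X'\subset X_\reg$. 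I would then pick, by Hironaka's theorem \cite{AHV,BM,Hironaka}, a complex manifold $M$ and a proper holomorphic surjection $\pi\colon M\to X$ which is biholomorphic over $X\setminus X_\sing\supseteq X\setminus X'$ and for which $M':=\pi^{-1}(X')$ is the support of an effective normal crossing divisor $D$ with ideal sheaf $\cJ_D:=\pi^*\cJ_{X'}\cdot\cO_M$, so that near any point of $M'$ there are coordinates $u=(u_1,\dots,u_m)$ with $\cJ_D=(u_1^{a_1}\cdots u_s^{a_s})$, hence $\cJ_D^n=(u_1^{na_1}\cdots u_s^{na_s})$. Setting $\wt L=\pi^{-1}(L)$, $\wt K=\pi^{-1}(K)$ (compact, $\wt K\subset\mathring{\wt L}$), $F=f\circ\pi$, $G_n=g_n\circ\pi\in\cO(\wt L)$, $h_n=F-G_n$, one records: (a) $F$ is noncritical on $\wt L\setminus M'$, so the complex subvariety $\Crit(F)$ of $M$ satisfies $\Crit(F)\cap\wt L\subset M'$; (b) $h_n\in\Gamma(\wt L,\cJ_D^n)$, since pullback commutes with taking powers of ideals and $\pi^*\cJ_{X'}\cdot\cO_M=\cJ_D$; (c) $\|h_n\|_{\wt L}=\|f-g_n\|_L<1/n$; (d) the point $\wt x_n:=(\pi|_{M\setminus M'})^{-1}(x_n)\in\wt K\setminus M'$ satisfies $dG_n(\wt x_n)=0$, so $dF(\wt x_n)=dh_n(\wt x_n)$.

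\emph{The key estimate.} The heart of the argument will be: there are a compact set $Q$ with $\wt K\subset Q\subset\mathring{\wt L}$ and a constant $C\ge1$ such that
\[
	|dh(y)|\ \le\ C^n\,\|h\|_{\wt L}\,\dist(y,M')^{\,n-1}\qquad\text{for all }n\in\N,\ h\in\Gamma(\wt L,\cJ_D^n),\ y\in Q,
\]
distances and $|dh(y)|$ computed in a fixed metric on a neighbourhood of $Q$. Away from $M'$ this is a Cauchy estimate (take $C$ large). Near $M'$ one passes to one of finitely many normal crossing charts covering a neighbourhood of $M'\cap Q$, writes $h=(u_1^{na_1}\cdots u_s^{na_s})\,\psi$ with $\psi$ holomorphic, bounds $\psi$ on a smaller polydisc by the maximum principle applied successively in $u_1,\dots,u_s$, and then differentiates the product, using $\dist(y,M')\asymp\min_{i\le s}|u_i(y)|$; the factor $C^n$ absorbs the geometric constants of the finitely many charts. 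I expect this estimate --- controlling the differential of a function that vanishes to high order along the a priori singular set $X'$ --- to be the main obstacle, and it is precisely to reduce it to an elementary monomial computation that one first passes to the resolution.

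\emph{The conclusion} would then follow by compactness: pass to a subsequence with $\wt x_n\to\wt x_\infty\in\wt K$. If $\wt x_\infty\notin M'$, then $dF(\wt x_\infty)\ne0$ by (a), yet $dG_n\to dF$ uniformly near $\wt x_\infty$ by the Cauchy estimates (since $\|h_n\|_{\wt L}\to0$), so $0=dG_n(\wt x_n)\to dF(\wt x_\infty)$ --- a contradiction. If $\wt x_\infty\in M'$, then $\dist(\wt x_n,M')\to0$, so the key estimate with (c) gives $|dF(\wt x_n)|=|dh_n(\wt x_n)|\le\frac1n C^n\dist(\wt x_n,M')^{\,n-1}\to0$, whence $dF(\wt x_\infty)=0$, i.e.\ $\wt x_\infty\in\Crit(F)$. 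Fixing a compact ball $Q_0\subset\mathring{\wt L}$ about $\wt x_\infty$ and applying the \L ojasiewicz inequality to the real-analytic function $\|dF\|^2$ yields $c>0$ and $N\in\N$ with $|dF(y)|\ge c\,\dist(y,\Crit(F))^N$ on $Q_0$. Since $\wt x_\infty\in\Crit(F)$ lies in the interior of $Q_0$, for $n$ large we have $\dist(\wt x_n,\Crit(F))=\dist(\wt x_n,\Crit(F)\cap Q_0)\ge\dist(\wt x_n,M')$ by (a); combining,
\[
	c\,\dist(\wt x_n,M')^{N}\ \le\ |dF(\wt x_n)|\ \le\ \tfrac1n C^n\,\dist(\wt x_n,M')^{\,n-1},
\]
so $c\le\frac1n C^n\dist(\wt x_n,M')^{\,n-1-N}\to0$ once $n>N+1$ (as $0<\dist(\wt x_n,M')\to0$) --- a contradiction. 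Hence the desired $r$ and $\epsilon$ exist.
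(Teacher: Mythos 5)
Your proposal is correct in substance, but it takes a genuinely different route from the paper's. The paper argues locally and directly: near each point of $K\cap X'$ it embeds a neighborhood into a ball in $\C^N$, uses the bounded extension operator of Lemma \ref{lem:FP3}, expresses criticality on $X$ as a finite system of holomorphic equations built from first derivatives of the extended function and of defining functions of the embedded variety, observes that order-$r$ agreement along $X'$ forces the perturbed system to agree with the original one to order $r-1$, and then excludes zeros of the perturbed system in a deleted neighborhood of $K\cap X'$ by a \L ojasiewicz-type argument (as in \cite{JKS} and the proof of \cite[Theorem 1.3]{FF:CI}); finitely many such charts together with Lemma \ref{lem:stability0} finish the proof. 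You instead argue by contradiction, pull everything back to a desingularization on which $\cJ_{X'}$ becomes a monomial (normal crossing) ideal, prove the Schwarz-type estimate $|dh|\le C^n\|h\|\,\dist(\cdot,M')^{n-1}$ for $h\in\Gamma(\wt L,\cJ_D^n)$ by the maximum principle in monomial charts, and play it against the classical \L ojasiewicz inequality for $\|dF\|^2$ on the smooth model; both proofs thus hinge on a \L ojasiewicz inequality, but yours trades the bounded extension operator and the explicit local system of equations for Hironaka's theorem in the stronger form that includes principalization (log resolution) of $\cJ_{X'}$ --- plain resolution of $X_\sing$ does not make $\pi^{-1}(X')$ normal crossing, and in general $\pi$ can only be taken biholomorphic over $X\setminus X'$, not over $X\setminus X_\sing$; fortunately the weaker property is all that your steps (a) and (d) actually use. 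The gain is that the delicate order-of-vanishing estimate reduces to an elementary monomial computation; the cost is the heavier desingularization input (which the paper does use elsewhere, in Lemma \ref{lem:crit}) and the purely existential diagonal choice of $r,\epsilon$, versus the paper's more explicit local mechanism. One detail to tighten in a full write-up: the inequality $\min_i|u_i(y)|\le C\,\dist(y,M')$ is not valid in an arbitrary chart containing $y$, since the nearest point of $M'$ may lie outside that chart; by a Lebesgue-number argument one should carry out the monomial estimate for $dh(y)$ in a chart containing a nearest point of $M'$ to $y$, after which your key estimate and the final contradiction go through as stated.
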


\begin{proof}
The result holds on compact subsets of $X\setminus X' \subset X_\reg$ in view of Lemma \ref{lem:stability0}, 
so it suffices to consider the behavior of $g$ near $K\cap X'$. 

Fix a point $p\in K \cap X'$ and embed an open neighborhood $U\subset X$ of $p$ as a closed complex subvariety 
(still denoted $U$) of an open ball $B \subset\C^N$. We choose $U$ small enough such that $U\subset \mathring L$. 
Pick a slightly smaller ball $B'\Subset B$ and set $U':=B'\cap U$. Lemma \ref{lem:FP3} (applied with the domain $\Omega=B$ 
in $X=\C^N$, the subvariety $X'=U$, and the subdomain $D=B'\Subset B$) furnishes a bounded linear extension operator 
$T$ mapping bounded holomorphic functions on $U$ to bounded holomorphic functions on $B'$. In the embedded picture, 
a point $x\in U' \setminus X' \subset B'$ is a critical point of $f$ if and only if the differential $d\tilde f_x \colon T_x\C^N\to\C$ 
of the extended function $\tilde f = Tf \in\cO(B')$ annihilates the Zariski tangent space $T_x U$. The latter condition is expressed 
by a finite number of holomorphic equations $F_j(f)=0$ on $B'$ $(j=1,\ldots,k)$ involving the values and first order partial
derivatives of $\tilde f$ and of some fixed  holomorphic defining functions $h_1,\ldots, h_m$ for the subvariety $U$ in $B$. 
(These equations express the fact that the differential of $\tilde f$ is contained in the linear span of the differentials of 
the functions $h_1,\ldots, h_m$; compare with the local description (\ref{eq:TX}) of $TX$.) By the assumption this system 
of equations has no solutions on $U\setminus X'$. If a bounded function $g\in \cO(U)$ agrees with $f$ to order $r$ along 
the subvariety $U \cap X'$ then, setting $\tilde g=Tg\in  \cO(B')$, the corresponding functions $F_j(g)|_{U'}$ agree with 
the functions $F_j(f)|_{U'}$ to order $r-1$ along $U'\cap X'$. By choosing the integer $r\in\N$ sufficiently big and the number 
$\epsilon$ bounded from above by some fixed number $\epsilon_0>0$, we can ensure that for any $g$ satisfying conditions 
(i) and (ii) the following system of holomorphic equations on $B' \subset \C^N$,
\begin{equation}
\label{eq:zeroset}
		h_1=0,\ldots, h_m=0,\quad F_1(g)=0,\ldots, F_k(g)=0, 
\end{equation}
has no solutions in $W\setminus X'$, where $W\subset U'$ is a neighborhood of $p$ whose size depends on $r$ and $\epsilon_0$.
 (This essentially follows from the \L ojasiewicz inequality, see e.g.\ \cite{JKS}. The details of this argument can also found in 
 the proof of \cite[Theorem 1.3]{FF:CI}; see in particular pp.\ 507--509.
In fact, looking at the common zero set of the system (\ref{eq:zeroset}) as the inverse image of the origin $0\in \C^{m+k}$ 
by the holomorphic map $B'\to \C^{m+k}$ whose components are the functions in (\ref{eq:zeroset}), the local aspect of the 
cited result from \cite{FF:CI} applies verbatim.) Since finitely many open sets $U'$ of this kind cover $K\cap X'$, we see 
that the system (\ref{eq:zeroset}) has no solutions on a deleted neighborhood of $K\cap X'$ in $K$. By choosing $\epsilon>0$ 
small enough we can also ensure in view of Lemma \ref{lem:stability0} that there are no solutions on the rest of $K$.
\end{proof}

Lemma \ref{lem:stability} fails in general without the interpolation condition as shown by Example \ref{ex:null2}. 
Here is an even simpler example on the cusp curve, showing that being {\em stratified noncritical} 
(see Definition \ref{def:stratified-noncritical}) is not a stable property even on complex curves if we allow 
critical points in the zero dimensional skeleton.

\begin{example}
\label{ex:nonstable}
The cusp curve $X=\{(z,w)\in \C^2\colon z^2=w^3\}$ has a singularity at the origin $(0,0)\in \C^2$ and is smooth elsewhere. 
It is desingularized by the map $\pi\colon \C\to X$, $\pi(t)=(t^3,t^2)$. The function $f(z,w)=zw$ on $X$ pulls back to the 
function $h(t)=f(\pi(t))=t^5$ with the only critical point at $t=0$, so $f|_X$ is stratified noncritical with respect to 
$\{(0,0)\} \subset X$. The perturbation of $h$ given by
\[
 	h_\epsilon(t)=t^3(t-\epsilon)^2= t^5 - 2\epsilon t^4 + \epsilon^2 t^3 = zw-2\epsilon w^2 + \epsilon^2 z
\] 
induces a holomorphic function $f_\epsilon \colon X\to \C$ with a critical point at $(\epsilon^3,\epsilon^2)$, 
so $f_\epsilon$ is not stratified noncritical on $\{(0,0)\} \subset X$ if $\epsilon\ne 0$.   	
\qed\end{example}

\begin{lemma}[{\bf The Genericity Lemma}]
\label{lem:generic}
Let $X$ be a Stein space. 
\begin{itemize}
\item[\rm (i)] For a generic $f\in \cO(X)$ the set $A(f):=\Crit(f|_{X_\reg})$ is discrete in $X$. 
\item[\rm (ii)] If $X'\subset X$ is a closed complex subvariety containing $X_\sing$ and $g\in \cO(X')$, then for a generic 
$f\in \cO_{X',g}(X)$ 	
 the set $\Crit(f|_{X\setminus X'})$ is discrete in $X$. In particular, 
a generic holomorphic extension of $g$ is noncritical on a deleted neighborhood of $X'$ in $X$. 
\item[\rm (iii)] If $g$ is a holomorphic function on an open neighborhood of $X'$ in $X$ and $r\in\N$, 
then the conclusion of part (ii) holds for a generic extension $f\in\cO(X)$ of $g|_{X'}$ which agrees with $g$ 
to order $r$ along $X'$.
\end{itemize}
\end{lemma}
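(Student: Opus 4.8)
The plan is to reduce everything to a transversality argument applied to the tangent map, carried out on the desingularization or directly on the strata, and then to exploit the Baire-category framework set up in the excerpt. First I would treat part~(i). Using the desingularization $\pi\colon M\to X$ from Lemma \ref{lem:crit}, one has $\Crit(f|_{X_\reg})=\pi(\Crit(F|_{M\setminus \pi^{-1}(X_\sing)}))$ where $F=f\circ\pi$, and it suffices to make $\Crit(F)$ discrete in the complex manifold $M$ for a generic $f\in\cO(X)$. Since $X$ is Stein, embed $X$ as a closed subvariety of some $\C^n$ and note that the restrictions $z_1|_X,\dots,z_n|_X$ together with products of the form $(z_i|_X)(z_j|_X)$ generate, at every point of $X_\reg$, enough jets to control the $1$-jet of $F$ freely along the regular locus. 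More precisely, on any coordinate chart of $M$ lying over $X_\reg$, the map sending $f$ to the tangent map $(df)$ of $F$ is a submersion onto the appropriate jet bundle over that chart (one uses that $\pi$ is biholomorphic over $X_\reg$ and that $\cO(X)$ separates $1$-jets at regular points, which follows from the Stein embedding). By the holomorphic transversality theorem (the version for sections of coherent sheaves over a Stein manifold — Abraham's parametric transversality, or the formulation in \cite{FF:Acta} — applied exhaustion-chart by exhaustion-chart and taking a countable intersection), the set of $f\in\cO(X)$ for which $dF$ is transverse to the zero section over each chart is residual in $\cO(X)$; transversality to the zero section in the $n$-dimensional manifold $M$ for an $\C^n$-valued object forces the zero set to be $0$-dimensional, i.e.\ discrete, in $M$, hence discrete in $X$.

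For part~(ii) I would run the same argument on the affine Fr\'echet (hence Baire) space $\cO_{X',g}(X)$ from \eqref{eq:Xprimeg}. The point is that on $X\setminus X'$ one still has a closed complex subvariety whose regular part is $(X\setminus X')\cap X_\reg = X\setminus X'$ (since $X'\supset X_\sing$), and the extension problem has enough freedom: given any $f_0\in\cO_{X',g}(X)$ and any point $x\in X\setminus X'$, one can add to $f_0$ an arbitrary element of $\cJ(X')=\Gamma(X,\cJ_{X'})$, and because $x\notin X'$ the sheaf $\cJ_{X'}$ is locally all of $\cO_X$ near $x$, so elements of $\cJ(X')$ realize arbitrary $1$-jets at $x$ (using Cartan's Theorem B to produce global sections of $\cJ_{X'}$ with prescribed jets at finitely many points of $X\setminus X'$). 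Thus the map $\cO_{X',g}(X)\ni f\mapsto dF\in$ (the relevant jet bundle) is again a submersion over every chart of $M\setminus\pi^{-1}(X')$, and the identical transversality-plus-countable-intersection argument gives a residual set of $f$ with $\Crit(f|_{X\setminus X'})$ discrete in $X$; discreteness in $X$ (rather than merely in $X\setminus X'$) is automatic because a critical point in $X\setminus X'$ accumulating at $X'$ would contradict discreteness, so in particular such a generic $f$ is noncritical on a deleted neighborhood of $X'$. Part~(iii) follows from part~(ii) applied with the subvariety $X'$ and the function $g|_{X'}$, together with Lemma \ref{lem:germs}: one first replaces $g$ by a global holomorphic function agreeing with it to order $r$ along $X'$ (here one needs a global version of Lemma \ref{lem:germs}, which on a Stein space follows from the coherence of $\cJ_{X'}^r$ and Cartan's Theorem B), and then perturbs within the closed affine subspace $\{f\in\cO(X)\colon f-g\in\Gamma(X,\cJ_{X'}^r)\}$, which is again Baire by the Closedness Theorem; the freedom to adjust $1$-jets at points of $X\setminus X'$ is unaffected by imposing vanishing to order $r$ along $X'$, since $\cJ_{X'}^r$ is still the full structure sheaf away from $X'$.

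The main obstacle I expect is the careful bookkeeping for the transversality/jet-transversality step: one must verify that the evaluation map $f\mapsto (\text{$1$-jet of }F\text{ at points of a chart in }M)$ is a genuine submersion of Fr\'echet spaces onto the jet bundle over that chart — this is where the Stein hypothesis and Theorem B are essential (to get global sections with prescribed finite jets), and where the subtlety about $M$ versus $X$ (i.e.\ pulling jets back through $\pi$, which is only biholomorphic away from the exceptional hypersurface) must be handled. Once submersivity is in place, the passage from "transverse to the zero section" to "discrete critical locus" is dimension-counting, and the promotion from a chart-by-chart statement to a global residual set is the standard countable intersection over an exhaustion, using that a residual set in a Baire space is dense.
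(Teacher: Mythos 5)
Your transversality argument correctly yields that, for generic $f$, every critical point in $X_\reg$ (resp.\ in $X\setminus X'$) is isolated there: jet transversality in the Baire spaces $\cO(X)$, $\cO_{X',g}(X)$, and $G+\Gamma(X,\cJ_{X'}^r)$, with submersivity of the $1$-jet evaluation supplied by Theorems A and B, is exactly how the paper argues, and your part (iii) reduction matches the paper's. The genuine gap is the passage from ``discrete in $X_\reg$'' (resp.\ ``discrete in $X\setminus X'$'') to ``discrete in $X$.'' Transversality over charts covering only the regular locus says nothing about accumulation of critical points at $X_\sing$ (resp.\ at $X'$); in part (i) you simply assert ``hence discrete in $X$,'' and in part (ii) your justification (``a critical point accumulating at $X'$ would contradict discreteness'') is circular, since the discreteness you have established at that stage is precisely discreteness away from $X'$. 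This non-accumulation statement is the heart of the lemma (it is the ``noncritical on a deleted neighborhood of $X'$'' clause that the later constructions rely on), and the paper proves it by a separate argument: by Lemma \ref{lem:crit} the set $C(f)=\Crit(f|_{X_\reg})\cup X_\sing$ is a closed complex subvariety of $X$; if the critical points accumulated at $x_0\in X_\sing$, then each of the (isolated) critical points in a compact neighborhood $K$ of $x_0$ would be an irreducible component of $C(f)$, so $K\cap C(f)$ would meet infinitely many irreducible components of $C(f)$, contradicting local finiteness. Some such argument (or an alternative, e.g.\ a quantitative \L ojasiewicz-type bound as in Lemma \ref{lem:stability}) must be added to your proposal.

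A secondary remark on your reduction in part (i): asking that $\Crit(F)$, $F=f\circ\pi$, be discrete in all of the desingularization $M$ is in general impossible. Pull-backs are constant on the positive-dimensional fibers of $\pi$, so along a smooth exceptional component $E$ the differential $dF$ induces a section of the conormal bundle of $E$, whose zero set is typically positive-dimensional (cf.\ Example \ref{ex1}, where $\Crit(\pi_j)\cong\C\P^{n-2}$). Your transversality step only covers charts over $X_\reg$, so this does not invalidate that step, but it shows the detour through $M$ cannot by itself produce the discreteness in $X$ that is missing; the desingularization is useful here only in the way the paper uses it, namely inside Lemma \ref{lem:crit} to show that $\Crit(f|_{X_\reg})\cup X_\sing$ is an analytic set.
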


\begin{proof}
We begin by proving part (i). A point $x\in X_\reg$ is a critical point of a holomorphic function $f\in\cO(X)$ if and only 
if the partial derivatives $\partial f/\partial z_j$ in any system of local holomorphic coordinates $z=(z_1,\ldots,z_n)$ on an 
open neighborhood $U\subset X_\reg$ of $x$ vanish at the point $z(x)$. (Here $n=\dim_x X$.) This gives $n$ independent
holomorphic equations on the 1-jet extension $j^1 f$ of $f$, so the jet transversality theorem for holomorphic maps $X\to\C$ 
(cf.\ \cite{Forster1970} or \cite[\S 7.8]{FF:book}) implies that every point $x\in A(f)$ is an isolated point of $A(f)$ for a generic 
$f\in \cO(X)$. (The argument goes as follows: write $X_\reg=\bigcup_{j=1}^\infty U_j$ where $U_j\subset X_\reg$ is a 
compact connected coordinate neighborhood for every $j$. The set of all functions $f\in \cO(X)$ whose 1-jet extension 
$U_j\ni x\mapsto j^1_x f\in \C^{n_j}$ (with $n_j=\dim U_j$) is transverse to $0\in\C^{n_j}$ on the compact set $U_j$
is open and dense in $\cO(X)$. Taking the countable intersection of these sets over all $j$ gives the statement.) 

For any  function $f$ as above
the set $A(f)$ is discrete in $X_\reg$, and we claim that $A(f)$ is then also discrete in $X$. If not, there is a point 
$x_0\in X_\sing$ and a sequence $x_j\in A(f)$ with $\lim_{j\to\infty} x_j=x_0$. By Lemma \ref{lem:crit} the set 
$C(f)=A(f)\cup X_\sing$ is a closed complex subvariety of $X$. Pick a compact neighborhood $K\subset X$ of $x_0$. 
Each point $x_j$ from the above sequence which belongs to $K$ is an isolated point of $C(f)$, hence an irreducible 
component of $C(f)$. Thus the compact subset $K\cap C(f)$ of the complex space $C(f)$ contains infinitely many 
irreducible components of $C(f)$, a contradiction. This proves part (i).

Part (ii) follows similarly by applying the jet transversality theorem in the Baire  space 
$\cO_{X',g}(X)=\{f\in \cO(X)\colon f|_{X'}=g\}$. 

Finally, let $g$ be as in (iii). Consider the short exact sequence of coherent analytic sheaves 
$0\to \cJ_{X'}^{r}\to \cO_X\to \cO_X/\cJ_{X'}^r\to 0$. The sheaf $\cO_X/\cJ_{X'}^r$ is supported on $X'$, and hence 
$g$ determines a section of it. Since $H^1(X;\cJ_{X'}^{r})=0$ by Cartan's Theorem B, the same section is determined by 
a function $G\in\cO(X)$. This means that $G-g$ vanishes to order $r$ along $X'$. To conclude the proof, 
it suffices to apply the transversality theorem in the Baire space $G+\cJ_{X'}^{r}(X) \subset \cO(X)$; 
the details are similar as in part (i). 
\end{proof}

\begin{proposition}
\label{prop:generic2}
If $(X,\Sigma)$ is a a stratified Stein space, then the set $\bigcup_{S\in\Sigma} \Crit(f|_S)$ is discrete in $X$ for a 
generic $f\in \cO(X)$. 
\end{proposition}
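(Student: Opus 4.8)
The plan is to reduce the statement to the Genericity Lemma (Lemma~\ref{lem:generic}) applied stratum by stratum, using the skeleton filtration and a Baire-category argument. First I would recall that the stratification $\Sigma$ is locally finite, so the $i$-skeleton $X_i$ (the union of all strata of dimension at most $i$) is a closed complex subvariety of $X$, and each difference $X_i\setminus X_{i-1}$ is either empty or a complex manifold of dimension $i$ whose connected components are precisely the $i$-dimensional strata. For a fixed $i\ge 1$, observe that $X_i\setminus X_{i-1}$ is a closed complex submanifold of the complex space $X\setminus X_{i-1}$, and the singular locus of $X\setminus X_{i-1}$ is contained in $(X\setminus X_{i-1})_\sing$, which is a closed subvariety of $X\setminus X_{i-1}$. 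Thus for a stratum $S$ with $\dim S=i$, the set $\Crit(f|_S)$ is the critical locus of the restriction of $f$ to the regular part of the complex space $X_i$ (near $S$), and I would like to conclude that it is discrete for generic $f$.

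The cleanest route is to apply Lemma~\ref{lem:generic}(i) to the Stein space $Y_i:=X_i$ for each $i\ge 1$: the lemma gives a residual set $\mathcal{R}_i\subset\cO(Y_i)$ such that $\Crit(f|_{(Y_i)_\reg})$ is discrete in $Y_i$ for every $f\in\mathcal{R}_i$. Now the restriction map $\rho_i\colon\cO(X)\to\cO(X_i)$ is continuous, linear, and surjective (surjectivity by Cartan's Theorem B, since $X_i$ is a closed subvariety of the Stein space $X$), hence it is open by the open mapping theorem for Fréchet spaces. Therefore $\rho_i^{-1}(\mathcal{R}_i)$ is residual in $\cO(X)$. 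Each stratum $S$ of dimension $i$ has $S\subset (Y_i)_\reg$ (indeed $S$ is a connected component of the manifold $X_i\setminus X_{i-1}$, and points of $X_i\setminus X_{i-1}$ where $X_i$ is still singular only worsen things — but in any case $S\cap(Y_i)_\sing$ is a proper subvariety of $S$, and more carefully one sees $S\subset(Y_i)_\reg$ since near such a point $X_i$ is an $i$-dimensional manifold), so for $f\in\rho_i^{-1}(\mathcal{R}_i)$ the set $\Crit(f|_S)$ is contained in the discrete set $\Crit(f|_{(Y_i)_\reg})$ and hence discrete. Taking $\mathcal{R}:=\bigcap_{i\ge 1}\rho_i^{-1}(\mathcal{R}_i)$, which is residual in $\cO(X)$ since it is a countable intersection of residual sets, we get that for every $f\in\mathcal{R}$ and every positive-dimensional stratum $S\in\Sigma$ the set $\Crit(f|_S)$ is discrete in $X$.

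It remains to see that the union $\bigcup_{S\in\Sigma}\Crit(f|_S)$ is discrete in $X$ for $f\in\mathcal{R}$, not merely each individual term. Here I would use local finiteness of $\Sigma$: given any point $x\in X$, choose a compact neighborhood $K$ of $x$; then $K$ meets only finitely many strata $S_1,\ldots,S_N$, so $K\cap\bigcup_{S\in\Sigma}\Crit(f|_S)=\bigcup_{j=1}^N\bigl(K\cap\Crit(f|_{S_j})\bigr)$ is a finite union of discrete sets, hence discrete. Since this holds near every point, the full union is discrete (and closed) in $X$. The main obstacle I anticipate is the bookkeeping around whether a positive-dimensional stratum $S$ really lies in the regular locus of the ambient skeleton $X_i$, so that Lemma~\ref{lem:generic}(i) applies cleanly; the resolution is that near a point of $S=S_i$ the skeleton $X_i$ coincides with the manifold $X_i\setminus X_{i-1}$, which is $i$-dimensional and smooth there, so indeed $S\subset(X_i)_\reg$, and the transversality argument of Lemma~\ref{lem:generic} goes through verbatim.
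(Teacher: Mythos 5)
Your argument is correct, but it takes a somewhat different route than the paper. The paper's proof does not pass through the skeleta at all: it covers each positive-dimensional stratum $S_j$ by countably many compact coordinate patches $U_{j,k}$ and shows, by the same jet-transversality argument as in the proof of Lemma \ref{lem:generic}, that the set $\cU_{j,k}$ of $f\in\cO(X)$ whose $1$-jet extension along $U_{j,k}$ is transverse to $0\in\C^{n_j}$ is open and dense in $\cO(X)$; any $f$ in the countable intersection $\bigcap_{j,k}\cU_{j,k}$ then satisfies the conclusion. You instead apply Lemma \ref{lem:generic}(i) as a black box to each skeleton $X_i$ (a Stein space, being a closed subvariety of $X$) and transfer genericity back to $\cO(X)$ via the restriction map $\rho_i$, using surjectivity (Cartan's Theorem B) and the open mapping theorem for Fr\'echet spaces to see that preimages of residual sets are residual; this step is sound. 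What your detour buys is that Lemma \ref{lem:generic}(i) yields discreteness of $\Crit(f|_{(X_i)_\reg})$ \emph{in $X_i$}, i.e.\ it already rules out accumulation of critical points of $f|_S$ at lower-dimensional strata (this is exactly where Lemma \ref{lem:crit} enters in the paper), and your explicit local-finiteness argument then gives discreteness of the whole union in $X$ --- a point the paper's one-line conclusion leaves to ``the same argument as in Lemma \ref{lem:generic}''. What the paper's route buys is economy: it works directly in $\cO(X)$ and needs neither the extension theorem nor the open mapping theorem. Finally, your slightly hedged verification that an $i$-dimensional stratum $S$ lies in $(X_i)_\reg$ is correct as you resolve it: by local finiteness and the frontier condition no stratum of dimension $\le i$ other than $S$ itself accumulates at a point of $S$, so near such a point $X_i$ coincides with the $i$-dimensional manifold $S$, and $\Crit(f|_S)\subset\Crit(f|_{(X_i)_\reg})$ since $S$ is open in $X_i$.
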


\begin{proof}
Let $\Sigma=\{S_j\}_j$ where $S_j$ are (smooth) strata. Each stratum $S_j$ of positive dimension $n_j>0$ is a union 
$S_j=\bigcup_k U_{j,k}$ of countably many compact coordinate sets $U_{j,k}$. The same argument as in the proof of 
Lemma \ref{lem:generic} shows that set $\cU_{j,k}\subset \cO(X)$, consisting of all $f\in \cO(X)$ such that the 1-jet 
extension map $U_{j,k} \ni x\mapsto j^1_x f \in \C^{n_j}$ is transverse to $0\in\C^{n_j}$ on $U_{j,k}$, is open and dense in 
$\cO(X)$. Every $f\in \bigcap_{j,k} \cU_{j,k}$ satisfies the conclusion of the proposition. 
\end{proof}

Since every complex space admits a stratification, Proposition \ref{prop:generic2} implies

\begin{corollary}
\label{cor:generic}
A generic holomorphic function on a Stein space has discrete critical locus.
\end{corollary}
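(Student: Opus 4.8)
The plan is to deduce the corollary immediately from Proposition \ref{prop:generic2} together with the observation $(*)$ of \S\ref{sec:intro}. First I would invoke Whitney's theorem \cite{Whitney,Whitney2} to fix a complex analytic stratification $\Sigma=\{S_j\}_j$ of the given Stein space $X$, and set $X_0=\bigcup_{\dim S_j=0} S_j$. Since every compact subset of $X$ meets at most finitely many strata, $X_0$ is a closed discrete subset of $X$; in particular it has no accumulation point in $X$.

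Next I would apply Proposition \ref{prop:generic2} to the stratified Stein space $(X,\Sigma)$, obtaining a set of the second category in $\cO(X)$ consisting of functions $f$ for which $D(f):=\bigcup_{S\in\Sigma,\,\dim S>0}\Crit(f|_S)$ is discrete in $X$ (only the positive-dimensional strata enter, any function on a point being noncritical). The one substantive step is then the inclusion $\Crit(f)\subset D(f)\cup X_0$, which I would verify as follows. Given $x\in\Crit(f)$, let $S\in\Sigma$ be the stratum with $x\in S$. If $\dim S=0$ then $x\in X_0$. If $\dim S>0$, then the germ of $S$ at $x$ is a local complex submanifold of $X$ of positive dimension through $x$; since $T_xS$ sits inside the Zariski tangent space $T_xX$ and $d(f|_S)_x=df_x|_{T_xS}$, the equality $df_x=0$ forces $d(f|_S)_x=0$, i.e.\ $x\in\Crit(f|_S)\subset D(f)$. (This is precisely the contrapositive of $(*)$.) Finally, a subset of a set that is discrete in $X$ is again discrete in $X$, and $D(f)\cup X_0$ has no accumulation point in $X$ because neither $D(f)$ nor $X_0$ does; hence $\Crit(f)$ is discrete in $X$ for every $f$ in the chosen second-category set, which is the assertion.

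I do not anticipate a genuine obstacle here. The only point requiring a little care is that one cannot in general choose $\Sigma$ with no $0$-dimensional strata — for instance $X_\sing$ may have isolated points, which are forced to be strata — so these must be absorbed separately into $X_0$; this is harmless since $X_0$ is already discrete in $X$. If one wants the sharper conclusion $\Crit(f)\subset D(f)$, one can in addition intersect with the second-category set of $f\in\cO(X)$ with $df_x\neq 0$ for every $x\in X_0$, using that the restriction map $\cO(X)\to T_x^*X$ is surjective when $X$ is Stein.
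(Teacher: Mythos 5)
Your proof is correct and takes essentially the same route as the paper: the paper deduces the corollary in one line by applying Proposition \ref{prop:generic2} to an arbitrary stratification of $X$, leaving implicit the inclusion $\Crit(f)\subset \bigcup_{S\in\Sigma}\Crit(f|_S)\cup X_0$ which follows from observation (*). You have merely spelled out that implicit step (and the harmless absorption of the discrete $0$-skeleton), which is exactly what the paper intends.
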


We also have the following result in which $X$ is not necessarily Stein.

\begin{corollary}
\label{cor:deleted}
Let $X$ be a complex space and $X'\subset X$ a closed Stein  subvariety containing $X_\sing$. 
Given a function $g\in \cO(X')$, there are an open neighborhood $U\subset X$ of $X'$ and a 
function $f\in\cO(U)$ such that $f|_{X'}=g$ and $f$ has no critical points in $U\setminus X'$. 

In particular, an isolated sigular point $p$ of a complex space $X$ admits a holomorphic function on a neighborhood 
$U$ of $p$ which is noncritical on $U\setminus\{p\}$.
\end{corollary}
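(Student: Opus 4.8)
The plan is to reduce to the Stein case already covered by Lemma \ref{lem:generic}(ii). Since $X'$ is a closed Stein subvariety of $X$, Siu's theorem \cite{Siu1976} furnishes an open Stein neighborhood $\Omega\subset X$ of $X'$. As $\Omega$ is open in $X$, one has $\Omega_\sing=\Omega\cap X_\sing\subset X_\sing\subset X'$, so $X'$ is a closed complex subvariety of the Stein space $\Omega$ containing its singular locus, and $g\in\cO(X')$ may be regarded as a function on a closed subvariety of $\Omega$.

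Next I would apply Lemma \ref{lem:generic}(ii) to the pair $X'\subset\Omega$ and the function $g$. It produces a (generic) $f\in\cO(\Omega)$ with $f|_{X'}=g$ whose critical locus on $\Omega\setminus X'$ is discrete in $\Omega$; by the "in particular" clause of that lemma, $f$ is then noncritical on a deleted neighborhood of $X'$ in $\Omega$. Concretely, by Lemma \ref{lem:crit} the set $\Crit(f|_{\Omega\setminus X'})\cup X'$ is a closed complex subvariety of $\Omega$, so $A:=\Crit(f|_{\Omega\setminus X'})$, being discrete in $\Omega$, is closed in $\Omega$ and disjoint from $X'$; hence $U:=\Omega\setminus A$ is an open neighborhood of $X'$ in $X$ on which $f$ has no critical points in $U\setminus X'$ (the germ of $X$ at a point of $U$ agrees with that of $\Omega$, so "critical" means the same thing in either space). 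Restricting $f$ to $U$ gives the desired function.

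For the concluding assertion, given an isolated singular point $p$ of $X$ I would first shrink $X$: choose an open neighborhood $V\subset X$ of $p$ with $V\cap X_\sing=\{p\}$, so that the reduced point $\{p\}$ is a closed Stein subvariety of $V$ containing $V_\sing$. Applying the first part with $X=V$, $X'=\{p\}$, and $g$ the zero function on the point then yields an open neighborhood $U\subset V$ of $p$ and a function $f\in\cO(U)$ with $f(p)=0$ that is noncritical on $U\setminus\{p\}$.

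I do not expect a genuine obstacle here: this is a fairly direct corollary, and the only points needing care are the availability of a Stein neighborhood of $X'$ (Siu's theorem) and the observation that a critical locus which is discrete in $\Omega$ cannot accumulate on $X'$, which is immediate from Lemma \ref{lem:crit}.
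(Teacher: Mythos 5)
Your argument is correct and is essentially the paper's own proof: Siu's theorem provides an open Stein neighborhood $\Omega\subset X$ of $X'$, and Lemma \ref{lem:generic}(ii) applied to the Stein space $\Omega$ gives the extension $f$ that is noncritical on a deleted neighborhood of $X'$. The extra details you supply (that $\Omega_\sing\subset X'$, and that the discrete critical set does not accumulate on $X'$ so one may pass to $U=\Omega\setminus A$) are exactly the routine verifications the paper leaves implicit.
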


\begin{proof}
According to Siu \cite{Siu1976} (see also \cite[\S 3.1]{FF:book} and the additional references therein) a Stein subvariety $X'$ 
in any complex space $X$ admits an open Stein neighborhood $\Omega \subset X$ containing $X'$ as a closed complex 
subvariety. The conclusion then follows from Lemma \ref{lem:generic} applied to the Stein space $\Omega$. 
\end{proof}

In the proof of Theorem \ref{th:splitting} we shall also need the following result.
This  is well known when $X$ is a complex manifold (i.e., without singularities), and we shall 
reduce the proof to this particular case. 

\begin{lemma}
\label{lem:closetoId}
Let $X$ be a reduced complex space and $U\Subset U'$ open relatively compact sets in $X$. Fix a distance function $\dist$ on 
$X$ inducing the standard topology. There is a constant $\epsilon>0$ such that for any holomorphic map $f:U' \to X$ satisfying 
$\sup_{x\in U'}\dist(x,f(x))<\epsilon$ the restriction $f|_U\colon U\to f(U) \subset X$ is biholomorphic onto its image. 
\end{lemma}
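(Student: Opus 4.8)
The plan is to reduce to the classical manifold case by desingularizing. Let $\pi\colon M\to X$ be a desingularization, proper and holomorphic, biholomorphic over $X_\reg$, with $E:=\pi^{-1}(X_\sing)$ a compact complex hypersurface in the manifold $M$. Fix relatively compact open sets $U\Subset U'\Subset U''$ in $X$, set $\wt U=\pi^{-1}(U)$, $\wt U'=\pi^{-1}(U')$; these are relatively compact in $M$. The difficulty is that a small perturbation $f\colon U'\to X$ need not lift to $M$ through $\pi$. So instead I would work directly on $X$ via a local embedding argument.

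First I would reduce to a local statement. By a standard compactness/patching argument it suffices to prove: for every point $p\in \overline{U}$ there are open neighborhoods $V_p\Subset V_p'$ of $p$ in $X$ and $\epsilon_p>0$ such that any holomorphic $f\colon V_p'\to X$ with $\sup_{V_p'}\dist(x,f(x))<\epsilon_p$ restricts to a biholomorphism $V_p\to f(V_p)$; then combine finitely many such pairs covering $\overline U$ together with a Lebesgue-number/separation argument to get injectivity of $f$ on all of $U$ (for $\epsilon$ small enough, $f$ moves points less than the minimal separation between the finitely many shrunk neighborhoods, so points in different pieces have distinct images, and biholomorphicity on each piece is an open-map plus injectivity statement). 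At a regular point $p\in X_\reg$ this is the classical inverse function theorem applied in a coordinate chart: $f$ being $C^0$-close to the identity forces the Jacobian to be close to $I$, hence invertible, and injectivity on a slightly smaller ball follows from a quantitative open mapping estimate.

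The heart of the matter is a singular point $p\in X_\sing$. Here I would embed a neighborhood $W$ of $p$ as a closed complex subvariety of a ball $B\subset\C^N$, with $p=0$, and use the bounded linear extension operator from Lemma \ref{lem:FP3} (applied with $\Omega=B$, subvariety $X'=W$, subdomain $B'\Subset B$): write $f=(f_1,\dots,f_N)\colon W\cap B'\to \C^N$ in these coordinates and extend each component to $\wt f=(\wt f_1,\dots,\wt f_N)\colon B'\to\C^N$ with $\|\wt f-\id\|$ controlled by $\|f-\id\|_{W}$ (the identity on $W$ being the restriction of $\id_{\C^N}$). Then $\wt f$ is a $C^0$-small perturbation of $\id_{\C^N}$ on $B'$, so by the classical case it is biholomorphic onto its image on a slightly smaller ball $B''$. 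The key point is that $\wt f$ need not preserve $W$, but that is irrelevant: the restriction $f|_{W\cap B''}=\wt f|_{W\cap B''}$ is then injective (being the restriction of an injective map) and holomorphic; and it is a biholomorphism onto its image because its inverse is the restriction of the holomorphic inverse $\wt f^{-1}$ to $f(W\cap B'')$, which is a locally closed complex subvariety of $\C^N$ sitting inside $\wt f(B'')$. Taking $V_p:=W\cap B''$, $V_p':=W\cap B'$ and $\epsilon_p$ small enough (so that $\|\wt f-\id\|$ lands in the range where the classical estimate applies) completes the local step.

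The main obstacle, as indicated, is precisely this extension trick: one cannot directly invoke the inverse function theorem on the singular space $X$ because there is no tangent-bundle isomorphism to exploit, and one cannot lift $f$ to the desingularization; the resolution is to pass to an ambient $\C^N$ via Lemma \ref{lem:FP3}, at the cost of losing the property "$f$ maps $W$ into $W$," which fortunately is not needed. One technical care-point is uniformity of $\epsilon_p$ and of the operator norm of the extension operator across the finitely many charts, but since only finitely many points $p$ are used this is automatic, and the final $\epsilon$ is the minimum of the $\epsilon_p$ together with a constant coming from the separation of the chosen neighborhoods.
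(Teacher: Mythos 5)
Your proposal is correct and takes essentially the same route as the paper: extend $f-\Id$ into an ambient $\C^N$ using the bounded extension operator of Lemma \ref{lem:FP3}, invoke the classical fact that a holomorphic map sup-norm close to the identity is biholomorphic on a slightly smaller domain, observe that the inverse of $f$ is the restriction of the ambient inverse to the image subvariety, and then obtain injectivity on $U$ from a finite cover together with a separation/triangle-inequality argument. The only cosmetic difference is that you localize at points of $\overline U$ inside small balls of $\C^N$, whereas the paper runs the same extension argument on Stein subdomains $U_j\Subset U'_j$ (using Siu's theorem to realize them as traces of domains in $\C^N$); the desingularization idea at the start of your write-up is, as you note yourself, discarded and plays no role.
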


\begin{proof}
We first prove the lemma in the case when $U'$ is Stein and its (compact) closure $\overline U'$ admits a Stein neighborhood 
$W$ in $X$.  Assuming as we may that $W$ is relatively compact,  it embeds as a closed 
complex subvariety of a Euclidean space $\C^N$ (cf.\ \cite[Theorem 2.2.8]{FF:book}). Since  $U'$ is 
Stein, Siu's theorem \cite{Siu1976} provides a bounded Stein domain $D'\Subset \C^N$ such that $D'\cap W=U'$.
Choose a pair of domains $D_0\Subset D$ in $\C^N$ such that $\overline U \subset D_0 \cap W$ and $\bar D\subset D'$. 
Let $T$ be a bounded linear extension operator furnished by Lemma \ref{lem:FP3}, mapping bounded holomorphic functions 
on $U'$ to bounded holomorphic functions on $D$ and satisfying 
\[
	Tg|_{D\cap U'}= g|_{D\cap U'} \quad \text{and} \quad ||Tg||_D \le C||g||_{U'}
\]
for some constant $C>0$ independent of $g$. 

Consider a holomorphic map $f\colon U'\to X$ close to the identity. We may assume that $f(U')\subset W \subset \C^N$. 
Write $f(x)=x+g(x)$ for $x\in U'$, where $g:U'\to\C^N$ is close to zero. Applying the operator $T$ to each component of $g$ 
we get a holomorphic map $F=\Id+Tg \colon D\to \C^N$ which is close to the identity in the sup norm on $D$. Hence $F$ is
biholomorphic on the smaller domain $D_0$ provided that $f$ is close enough to the identity on $U'$. Since $U\subset D_0$ 
and $F|_{U}=\Id_U + g|_U=f|_U$, we infer that $f\colon U\to f(U) \subset W$ is biholomorphic as well. Furthermore, the inverse 
map $F^{-1}\colon F(D_0)\to D_0$, restricted to $F(D_0)\cap W$, has range in $W$ as is easily seen by considering the situation 
on $W_\reg$ and applying the identity principle. This completes the proof in the special case.

The general case follows as in the standard manifold situation. By compactness of $\overline U$ 
we can choose finitely many triples of open sets $V_j\Subset U_j\Subset U'_j$ in $X$ $(j=1,\ldots,m)$ such that 
\begin{itemize}
\item[\rm (i)]   $\overline U\subset \bigcup_{j=1}^m V_j$ and $\bigcup_{j=1}^m U'_j \subset U'$, and
\item[\rm (ii)]  $U'_j$ is Stein and $\overline U'_j$ has a Stein neighborhood in $X$ for every $j=1,\ldots,m$.
\end{itemize}
Pick a number $\epsilon_0>0$ such that $\dist(V_j,X\setminus U_j) > 2\epsilon_0$ for every $j=1,\ldots,m$. By the special 
case proved above, applied to the pair $U_j\Subset U'_j$, we can find a number $\epsilon\in (0,\epsilon_0)$ such that 
$f|_{U_j}\colon U_j\to f(U_j)$ is biholomorphic for every $j$ provided that $\dist(x,f(x))<\epsilon$ for all $x\in U'$. 
Since $U\subset \bigcup_{j=1}^m U_j$, it follows that $f|_U:U\to f(U)$ is biholomorphic as long as it is injective. 
Suppose that $f(x)=f(y)$ for a pair of point $x\ne y$ in $U$. Since the sets $V_j$ cover $U$, we have $x\in V_j$ for 
some $j$. As $f$ is injective on $U_j$, it follows that $y\in U\setminus U_j$ and hence $\dist(x,y) >2\epsilon_0$. 
The triangle inequality and the choice of $\epsilon$ then gives 
\[
	  \dist(f(x),f(y)) \ge \dist(x,y) - \dist(x,f(x)) - \dist(y,f(y))>2\epsilon_0-2\epsilon>0,
\]
a contradiction to $f(x)=f(y)$. Thus $f$ is injective on $U$. 
\end{proof}


\section{A splitting lemma for biholomorphic maps on complex spaces} \label{sec:gluing}

In this section we prove a splitting lemma for biholomorphic maps close to the identity on  Cartan pairs in complex spaces; 
see Theorem \ref{th:splitting} below. This result is the key to the proof of our main theorems; 
it will be used for gluing pairs of holomorphic functions with control of their critical loci. 
The nonsingular case is given by \cite[Theorem 4.1]{FF:Acta}.

Recall that a compact set $K$ in a complex space $X$ is said to be a {\em Stein compact} if $K$ admits a basis of open 
Stein neighborhoods in $X$. We recall the following notion.

%
%
%
%
\begin{definition} \label{def:CP}
{\rm \cite[p.\ 209]{FF:book}}
{\rm (I)} A pair $(A,B)$ of compact subsets in a complex space $X$ is a {\em Cartan pair} if it satisfies the following conditions:  
\begin{itemize}
\item[\rm(i)]   $A$, $B$, $D=A\cup B$ and $C=A\cap B$ are Stein compacts, and
\item[\rm(ii)]  $A,B$ are {\em separated} in the sense that
$\overline{A\setminus B}\cap \overline{B\setminus A} =\emptyset$.
\end{itemize}
\noindent {\rm (II)} A pair $(A,B)$ of open sets in a complex manifold $X$ is a {\em strongly pseudoconvex Cartan pair of 
class $\cC^\ell$} $(\ell\ge 2)$ if $(\bar A,\bar B)$ is a Cartan pair in the sense of (I) and the sets $A$, $B$, $D=A\cup B$ 
and $C=A\cap B$ are Stein domains with strongly pseudoconvex boundaries of class $\cC^\ell$.
\end{definition}

We shall use the following properties of Cartan pairs:
\begin{itemize}
\item[\rm (a)] Let $(A,B)$ be a Cartain pair in $X$. If $X$ is a complex subspace of another complex space 
$\wt X$, then $(A,B)$ is also a Cartan pair in $\wt X$ (cf.\  \cite[Lemma 5.7.2, p.\ 210]{FF:book}).
\item[\rm (b)] Every Cartan pair $(A,B)$ in a complex manifold $X$ can be approximated from outside by smooth 
strongly pseudoconvex Cartain pairs (cf.\ \cite[Proposition 5.7.3, p.\ 210]{FF:book}). 
\item[\rm (c)] One can solve any Cousin-I problem with sup-norm bounds on a strongly pseudoconvex Cartan pair 
(cf.\ \cite[Lemma 5.8.2, p.\ 212]{FF:book}).
\end{itemize}

We denote by $\dist$ a distance function on $X$ which induces its standard complex space topology. 
(The precise choice will not be important.) Given a compact set $K\subset X$ and continuous maps $f,g\colon K\to X$, 
we shall write 
\[
	\dist_K(f,g)=\sup_{x\in K} \dist(f(x),g(x)).
\]
By $\Id$ we denote the identity map; its domain will always be clear from the context.

%
%
%

\begin{theorem} 
\label{th:splitting}
Assume that $X$ is a complex space and $X'$ is a closed complex subvariety of $X$ containing its singular locus $X_\sing$. 
Let $(A,B)$ be a Cartan pair in $X$ such that $C:=A\cap B\subset X\setminus X'$. For any open set $\wt C \subset X$ 
containing $C$ there exist open sets $A'\supset A$, $B'\supset B$, $C'\supset C$ in $X$, with $C'\subset A'\cap B'\subset \wt C$,
 satisfying the following property. Given a number $\eta>0$, there exists a number $\epsilon_\eta >0$ such that for each 
 holomorphic map $\gamma\colon \wt C\to X$ with $\dist_{\wt C}(\gamma,\Id) < \epsilon_\eta$ there exist biholomorphic maps 
 $\alpha = \alpha_\gamma \colon A'\to \alpha(A') \subset X$ and $\beta = \beta_\gamma \colon B'\to \beta(B') \subset X$ 
 satisfying the following properties:
\begin{itemize}
\item[\rm (a)] $\gamma\circ \alpha = \beta$ on $C'$, 
\item[\rm (b)] $\dist_{A'}(\alpha,\Id) < \eta$ and $\dist_{B'}(\beta,\Id) < \eta$, and
\item[\rm (c)] $\alpha$ and $\beta$ are tangent to the identity map to any given finite order along the subvariety $X'$ 
intersected with their respective domains. 
\end{itemize}
\end{theorem}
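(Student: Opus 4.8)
The plan is to follow the strategy of \cite[Theorem 4.1]{FF:Acta} but to replace every use of the ambient Euclidean splitting by a bounded Cousin-I solution on the Cartan pair \emph{adapted to the subvariety} $X'$. The map $\gamma$ is to be decomposed by the classical functional-equation/iteration scheme: writing $\gamma = \Id + c$ with $c$ small on $\wt C$, one looks for $\alpha = \Id + a$ on $A'$ and $\beta = \Id + b$ on $B'$ with $b - a = c + (\text{quadratic error})$ on $C'$. Splitting the cocycle $c$ (at the linear level) as $c = b_0 - a_0$ with $a_0$ holomorphic on a neighborhood of $A$ and $b_0$ holomorphic on a neighborhood of $B$, both small, is exactly a Cousin-I problem with sup-norm estimates; by property (b) of Cartan pairs we may first shrink to a smooth strongly pseudoconvex Cartan pair $(\tilde A,\tilde B)$ squeezed between $(A,B)$ and the chosen neighborhoods, and by property (c) (i.e.\ \cite[Lemma 5.8.2]{FF:book}) solve it there with a linear bound $\|a_0\|_{\tilde A} + \|b_0\|_{\tilde B} \le M\|c\|_{\wt C}$. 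One then runs the usual Newton-type iteration: having $\alpha_k,\beta_k$, the new error $\gamma\circ\alpha_k - \beta_k$ on $C'$ is of size $O(\|c\|^{2^k})$, one splits it again, composes, and passes to the limit. Convergence on a slightly smaller pair $(A',B')$ is guaranteed for $\epsilon_\eta$ small, and the limits $\alpha,\beta$ are injective by Lemma \ref{lem:closetoId}; this gives (a) and (b).

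The new ingredient is the interpolation condition (c). Here the key point is that, since $C = A\cap B \subset X\setminus X'$, the subvariety $X'$ only meets the "private" parts $A\setminus B$ and $B\setminus A$. So I would perform the Cousin-I splitting not for $c$ itself but modulo $\cJ_{X'}^N$ for a large integer $N$: work with the coherent ideal sheaf $\cJ_{X'}^N$ and solve the Cousin-I problem for the section of $\cO_X/\cJ_{X'}^N$-valued (equivalently, solve with values in $\cO_X$ and subtract off a correction) so that $a_0$ vanishes to order $N$ along $X'\cap \tilde A$ and $b_0$ vanishes to order $N$ along $X'\cap \tilde B$. Concretely: near $A$ the subvariety $X'$ is disjoint from $C$, so the Cousin data $c$ (supported effectively on the overlap) is already $0$ along $X'$; one can arrange the solution $(a_0,b_0)$ to inherit vanishing to order $N$ along $X'$ by solving the Cousin problem for $\cJ_{X'}^N$ (using $H^1$-vanishing from Cartan's Theorem B on the Stein neighborhoods, plus sup-norm estimates as in property (c)). Then at every step of the iteration the increments of $\alpha$ and $\beta$ vanish to order $N$ along $X'$, hence so do $\alpha - \Id$ and $\beta - \Id$. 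Taking $N$ larger than the prescribed order gives (c).

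The main obstacle is the interaction of the iteration with the interpolation and with the fact that $X$ is singular. On a singular space one cannot freely use local coordinates, but properties (a)–(c) of Cartan pairs listed before the theorem are precisely engineered to get around this: (a) lets us embed a Stein neighborhood of $D = A\cup B$ into some $\C^M$ and pull back the ambient Cartan-pair machinery, (c) gives the bounded Cousin-I solution on the strongly pseudoconvex model. The delicate bookkeeping is to keep the sup-norm bounds uniform through the (countably many) shrinking steps of the iteration so that the total displacement stays below $\eta$ and the composition of infinitely many near-identity maps converges to a biholomorphism on $(A',B')$; this is handled exactly as in the proof of \cite[Theorem 4.1]{FF:Acta}, the only change being that the bounded extension operator (Lemma \ref{lem:FP3}) and Lemma \ref{lem:closetoId} are invoked to control injectivity and ranges over the singular space rather than a manifold. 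Finally, one checks that $C' \subset A'\cap B'\subset \wt C$ can be arranged from the outset by choosing the Stein neighborhoods inside $\wt C$ before starting the iteration.
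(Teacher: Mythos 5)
There is a genuine gap here, and it is exactly the one the paper flags as the reason \cite[Theorem 4.1]{FF:Acta} cannot be applied directly: with your additive ansatz $\alpha=\Id+a$, $\beta=\Id+b$, the maps you produce need not send $X$ into $X$. On a singular space the expression $\Id+a$ has no intrinsic meaning, so you must work in an ambient embedding $X\subset\C^N$; but then the Cousin-I corrections $a,b$ are merely $\C^N$-valued, and $\Id+a$ moves points of $X$ off the subvariety, so the required conclusion $\alpha\colon A'\to\alpha(A')\subset X$, $\beta\colon B'\to\beta(B')\subset X$ is not achieved. The paper's proof replaces additive corrections by composition with a local dominating spray $s(z,t)$ built from flows of holomorphic vector fields on $\C^N$ that are tangent to $X$ and vanish to order $n_0$ along $X'$ (sections of $\cJ_{X'}^{n_0}\cT_X$): setting $\alpha(z)=s(z,a(z))$ and $\beta(z)=s(z,b(z))$ makes preservation of $X$ automatic, and the tangency to the identity along $X'$ in (c) is built into the spray rather than extracted from the Cousin solution. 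Running the iteration then requires the lifting Lemma \ref{lem:lifting} (writing $\gamma(z)=s(z,c(z))$) and, in order to split the fiber component $c$ with uniform bounds on a shrinking family of domains, the special family of ambient strongly pseudoconvex Cartan pairs compatible with the Docquier--Grauert retraction $\rho$ (property (iv) of Lemma \ref{lem:CP}), which guarantees that the extension $c\circ\rho$ has the same sup norm as $c$; your appeal to Lemma \ref{lem:FP3} ``to control ranges over the singular space'' does not substitute for any of this.

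A secondary problem is your mechanism for (c): solving Cousin-I with values in $\cJ_{X'}^N$ ``with sup-norm estimates'' is asserted but not justified. The bounded splitting \cite[Lemma 5.8.2]{FF:book} is for ordinary Cousin-I; Cartan's Theorem B gives solvability in the ideal but no bounds, and uniform bounds are indispensable for the quadratically convergent scheme. Moreover, in the standard bounded splitting $a_0=\xi c-S(c\,\dibar\xi)$, $b_0=(\xi-1)c-S(c\,\dibar\xi)$, the cut-off terms do vanish near $X'$ (since $X'$ misses $C$), but the $\dibar$-correction $S(c\,\dibar\xi)$ is only holomorphic near $X'$ and has no reason to vanish there, so the naive solution does not lie in $\cJ_{X'}^N$ even though the data is supported away from $X'$. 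The spray device of the paper sidesteps both issues at once.
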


In view of Lemma \ref{lem:closetoId} we can shrinking the set $\wt C$ if necessary and assume that $\gamma$ 
is biholomorphic onto its image. The crucial property (a) then furnishes a compositional splitting of $\gamma$. 
As in \cite{FF:Acta}, the proof will also show that the maps $\alpha_\gamma$ and $\beta_\gamma$ can be chosen to depend 
smoothly on $\gamma$ such that $\alpha_\Id=\Id$ and $\beta_\Id=\Id$.

The proof follows in spirit that of \cite[Theorem 4.1]{FF:Acta}, but is technically more involved. 
We embed a Stein neighbor\-hood of the Stein compact $D=A\cup B$ in $X$ as a closed complex subvariety of a 
complex Euclidean space $\C^N$. We then use a holomorphic retraction on a neighborhood $\Omega\subset \C^N$ of the 
Stein compact $C=A\cap B \subset X_\reg$ in order to transport the linearized splitting problem to a suitable 
1-parameter family of Cartan pairs in $\C^N$; see Lemma \ref{lem:CP}. 
(We have been unable to apply \cite[Theorem 4.1]{FF:Acta} directly since the resulting biholomorphic maps 
$\alpha,\beta$ need not map the subvariety $X$ to itself.)
From this point on we perform an iteration, similar to the one in \cite{FF:Acta}, 
in which the domains of maps shrink by a controlled amount at every step and the error term converges to zero quadratically.

\begin{proof}[Proof of Theorem \ref{th:splitting}]
Replacing $X$ by an open Stein neighborhood of $D$ we may assume that $X$ is a closed complex subvariety of a Euclidean 
space $\C^N$ \cite[Theorem 2.2.8]{FF:book}. The pair $(A,B)$ is then also a Cartan pair in $\C^N$ 
\cite[Lemma 5.7.2, p.\ 210]{FF:book}. We shall assume that the distance function dist on $X$ is induced by the 
Euclidean distance on $\C^N$.

By Cartan's Theorem $A$ there exist entire functions $h_1,\ldots, h_l\in\cO(\C^N)$ such that 
\[
	X=\{z\in \C^N\colon h_i(z)=0,\quad i=1,\ldots,l \}
\]
and $h_1,\ldots,h_l$ generate the ideal sheaf $\cJ_X$ of $X$. (We shall only need finite ideal generation on compact subsets 
of $\C^N$, but in our case this actually holds globally since $X$ is a relatively compact subset of the original Stein space.) 
Consider the analytic subsheaf $\cT_X \subset \cO^N_{\C^N}$ whose stalk $\cT_{X,p}$ at any point $p\in \C^N$ consists 
of all $N$-tuples $(g_1,\ldots,g_N) \in \cO_{\C^N,p}^N$ satisfying the system of equations
\[
     \sum_{j=1}^N g_j \frac{\di h_i}{\di z_j}\in \cJ_{X,p},\quad  i=1,\ldots,l.
\]
The condition is void when $p\notin X$, while at points $p\in X$ it means that the vector 
$V(p)=(g_1(p),\ldots,g_N(p)) \in \C^N \cong T_p\C^N$ is Zariski tangent to $X$. Observe that $\cT_X$ is the preimage 
of the coherent subsheaf $(\cJ_X)^l \subset \cO_{\C^N}^l$ under the homomorphism 
$\sigma\colon \cO^N_{\C^N} \to \cO_{\C^N}^l$ whose $i$-th component equals 
$\sigma_i(g_1,\ldots,g_N) = \sum_{j=1}^N g_j \frac{\di h_i}{\di z_j}$. Therefore $\cT_X$ is a coherent analytic 
subsheaf of $\cO^N_{\C^N}$. Sections of $\cT_X$ are holomorphic vector fields on $\C^N$ which are tangent to $X$
along $X$.  (Note that the quotient $\cT_X/\cJ_X \cT_X$, restricted to $X$, is the {\em tangent sheaf} of $X$ \cite{Fischer}.)

Denote by $\cJ_{X'}$ the sheaf of ideals of the subvariety $X'\subset X$. Fix an integer $n_0 \in\N$ and consider the 
coherent analytic sheaf $\cE:=\cJ^{n_0}_{X'}\cT_X$ on $\C^N$. By Cartan's Theorem A there exist sections $V_1,\ldots, V_m$ 
of $\cE$ that generate $\cE$ over the compact set $C=A\cap B \subset X\setminus X'$. These sections are holomorphic vector 
fields on $\C^N$ which are tangent to $X$ and vanish to order $n_0$ on the subvariety $X'$. Furthermore, as $C$ is contained 
in the regular locus $X_\reg$ of $X$ and $TX_\reg$ is the usual tangent bundle, the vectors $V_1(p),\ldots, V_m(p)\in T_p\C^N$ 
span the tangent space $T_p X\subset T_p\C^N$ at every  point $p\in X$ in a neighborhood of $C$. 

Denote by $\phi^j_t$ the local holomorphic flow of the vector field $V_j$ for a complex value of time $t$. 
For each point $z\in \C^N$ the flow $\phi^j_t(z)$ is defined for $t$ in a neighborhood of $0\in\C$. 
Let $t=(t_1,\ldots,t_m)$ be holomorphic coordinates on $\C^m$. The map
\begin{equation}
\label{eq:spray}
    s(z,t)= s(z,t_1,\ldots,t_m)=\phi^1_{t_1}\circ \cdots \circ \phi^m_{t_m}(z),\quad z\in \C^N,
\end{equation}
is defined and holomorphic on an open neighborhood of $\C^N \times \{0\}^m$ in $\C^N\times\C^m$ and assumes values in 
$\C^N$. Since the vector fields $V_j$ are tangent to $X$, we have $s(z,t)\in X$ for all $t$ whenever $z\in X$. For any 
point $z\in X$ we denote by 
\begin{equation}\label{eq:Vds}
	Vd(s)_z = \frac{\partial}{\partial t}\bigg|_{t=0} s(z,t) \colon \C^m \to T_z X 
\end{equation}
the partial differential of $s$ at $z$ in the fiber direction; we call $Vd(s)$ the {\em vertical derivative} of $s$ 
over the subvariety $X$. The definition of the flow of a vector field implies  
\[
	\frac{\partial s(z,t)}{\partial t_j}\Big|_{t=0}= V_j(z),\quad j=1,\ldots,m.
\]
Since the vectors $V_1(z),\ldots, V_m(z)$ span the tangent space $T_z X$ at every point $z \in C$, the vertical derivative 
$Vd(s)$ (\ref{eq:Vds}) is surjective over a neighborhood of $C$. Thus $s$ is a {\em local holomorphic spray} on $\C^N$, 
and the restriction of $s$ to a neighborhood of $C$ in $X$ is {\em dominating spray} (cf.\ \cite[p.\ 203]{FF:book} for these notions). 

Fix an open Stein set $U_0 \Subset X\setminus X' \subset X_\reg$ such that $C\subset U_0$ and $Vd(s)$ is surjective over 
$\overline U_0$. It follows that $U_0\times \C^m= E\oplus E'$, where $E'=\ker Vd(s)|_{U_0}$ and $E$ is some complementary 
to $E'$ holomorphic vector subbundle of $U_0\times\C^m$. (Such $E$ exists since $U_0$ is Stein and hence every holomorphic
vector subbundle splits the bundle.) Then $Vd(s)\colon E|_{U_0} \to TX|_{U_0}=TU_0$ is an isomorphism of holomorphic 
vector bundles. By the inverse mapping theorem, the restriction of $s$ to the fiber $E_z$ for any $z\in U_0$ maps an open
 neighborhood of the origin in $E_z$ biholomorphically onto an open neighborhood of the point $z$ in $X$.  Shrinking $U_0$ 
 slightly around $C$ we get an open set $U_1\supset C$ in $X$ such that the following holds (cf.\ \cite[Lemma 4.4]{FF:Acta}). 

%
%
%
%
\begin{lemma}
\label{lem:lifting}
There are a neighborhood $U_1 \subset X\setminus X'$ of $C$ and constants $\epsilon_1>0$ and $M_1\ge 1$ such that for 
every open set $U\subset U_1$ and every holomorphic map $\gamma\colon U\to \gamma(U)\subset X$ satisfying 
$\dist_U(\gamma,\Id)<\epsilon_1$ there exists a unique holomorphic section $c\colon U\to E|_U$ satisfying
\[
	\gamma(z)=s(z,c(z))\ \ \forall z\in U, \quad 
	M_1^{-1} \,\dist_U(\gamma,\Id) \le ||c||_U \le M_1 \,\dist_U(\gamma,\Id).
\]
\end{lemma}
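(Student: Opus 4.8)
The plan is to recognize this as the manifold situation of \cite[Lemma 4.4]{FF:Acta}: although $X$ is singular, the set $C=A\cap B$ lies in $X_\reg$ and $U_0\Subset X_\reg$ is a genuine complex manifold, so the restriction of the spray $s$ to $U_0$ is an ordinary dominating holomorphic spray and the construction of $c$ reduces to a quantitative holomorphic inverse function theorem for the associated family of maps. Since $E$ is a holomorphic subbundle of $U_0\times\C^m$ we measure vectors of $E_z$ with the Euclidean norm of $\C^m$, and we consider the holomorphic map $\Phi(z,v)=s(z,v)$ defined for $z\in U_0$ and $v\in E_z$ with $\|v\|<\rho$ (for a suitable $\rho>0$). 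It satisfies $\Phi(z,0)=s(z,0)=z$, it takes values in $X$ whenever $z\in X$ (the vector fields $V_j$ are tangent to $X$), and its vertical derivative along the zero section is $Vd(s)|_{E_z}\colon E_z\to T_zX$, which is an isomorphism of vector bundles over $U_0$ by construction; in particular $\dim E_z=\dim_zX$, so $\Phi(z,\cdot)$ is a map between equidimensional complex manifolds near $z$.

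First I would fix a relatively compact neighborhood $U_1\Subset U_0$ of $C$ (automatically contained in $X\setminus X'$). Over the compact set $\overline{U_1}$ the differential of the holomorphic map $(z,v)\mapsto(z,\Phi(z,v))$, from a neighborhood of the zero section of $E|_{U_1}$ into $U_1\times X$, is invertible along the zero section with uniformly bounded inverse; a Newton iteration controlled by Cauchy estimates on $\Phi$ over $\overline{U_1}$ then produces uniform constants $r\in(0,\rho)$ and $\epsilon_1>0$ such that $\Phi(z,\cdot)$ is injective on the $r$-ball of $E_z$ for every $z\in U_1$, together with a holomorphic map $\psi$ on the tube $\{(z,w)\colon z\in U_1,\ w\in X,\ |w-z|<\epsilon_1\}$ with values in $E$, such that $\psi(z,z)=0$, $\|\psi(z,w)\|<r$ throughout, and $(z,w)\mapsto(z,\psi(z,w))$ inverts $(z,v)\mapsto(z,\Phi(z,v))$ on their common domains (holomorphy of $\psi$ in both variables comes from the holomorphic implicit function theorem). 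Cauchy estimates applied to $\Phi$ and to $\psi$ over the relevant compact sets, together with $\psi(z,z)=0$ and $\Phi(z,0)=z$, furnish a constant $M_1\ge1$ with $M_1^{-1}|w-z|\le\|\psi(z,w)\|\le M_1|w-z|$ whenever $|w-z|<\epsilon_1$; shrinking $\epsilon_1$ we may also arrange $M_1\epsilon_1<r$.

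With these data fixed, given any open set $U\subset U_1$ and any holomorphic $\gamma\colon U\to\gamma(U)\subset X$ with $\dist_U(\gamma,\Id)<\epsilon_1$, I set $c(z):=\psi(z,\gamma(z))\in E_z$. This is a holomorphic section of $E|_U$ satisfying $s(z,c(z))=\Phi(z,c(z))=\gamma(z)$ for all $z\in U$, and the two-sided estimate for $\psi$ gives $M_1^{-1}\dist(\gamma(z),z)\le\|c(z)\|\le M_1\dist(\gamma(z),z)$ pointwise, hence the asserted bound $M_1^{-1}\dist_U(\gamma,\Id)\le\|c\|_U\le M_1\dist_U(\gamma,\Id)$ after taking the supremum over $z\in U$. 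For uniqueness, any holomorphic section $c'\colon U\to E|_U$ satisfying both displayed conditions obeys $\|c'\|_U\le M_1\dist_U(\gamma,\Id)\le M_1\epsilon_1<r$, so $c'(z)$ lies in the $r$-ball of $E_z$ on which $\Phi(z,\cdot)$ is injective; since $\Phi(z,c'(z))=\gamma(z)=\Phi(z,c(z))$, we conclude $c'=c$.

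The one point requiring care is the \emph{uniformity} of the constants $r$, $\epsilon_1$ and $M_1$ over the whole neighborhood $U_1$ of $C$ — that is, the quantitative inverse function theorem for the family $\{\Phi(z,\cdot)\}_{z\in U_1}$. This is where compactness of $\overline{U_1}$ and the Cauchy estimates enter, exactly as in the manifold case treated in \cite[Lemma 4.4]{FF:Acta}; no additional difficulty arises from the singularities of $X$ because the entire argument takes place over $U_1\Subset X_\reg$.
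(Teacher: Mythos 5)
Your proposal is correct and takes essentially the same route as the paper, which reduces the lemma to the nonsingular situation over $U_1\Subset U_0\subset X_\reg$ (where $Vd(s)\colon E\to TX$ is an isomorphism) and invokes the uniform inverse function theorem for the family of fiberwise maps $s(z,\cdot)|_{E_z}$, citing \cite[Lemma 4.4]{FF:Acta}. You have simply written out the standard quantitative details of that argument: the uniform inverse $\psi$ over the compact set $\overline{U_1}$, the definition $c(z)=\psi(z,\gamma(z))$ with the two-sided Lipschitz estimates giving $M_1$, and uniqueness from injectivity of $s(z,\cdot)$ on the $r$-balls of $E_z$.
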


Since $E$ is a subbundle of the trivial bundle $U\times \C^m$, we may consider any section $c$ in Lemma \ref{lem:lifting} 
as a holomorphic map $U\to \C^m$, and $||c||_U$ denotes the sup-norm of $c$ on $U$ measured with respect to the 
Euclidean metric on $\C^m$. As $C=A\cap B\subset X_\reg$ is a Stein compact, the Docquier-Grauert theorem 
\cite{Docquier-Grauert} (see also \cite[Theorem 3.3.3, p.\ 67]{FF:book}) furnishes an open Stein neighborhood 
$\Omega \Subset \C^N$ of $C$ and a holomorphic retraction 
\begin{equation}\label{eq:rho}
	\rho\colon \Omega\to \Omega\cap X \Subset X_\reg.
\end{equation}
The map   
\begin{equation}\label{eq:T}
	T\colon \cO(\Omega\cap X) \to \cO(\Omega),\quad c\mapsto Tc=c\circ \rho,
\end{equation}
is then a bounded extension operator satisfying $||Tc||_\Omega = ||c||_{\Omega\cap X}$. By choosing $\Omega$ small enough 
we may assume that $\overline \Omega\cap X \subset \wt C$, where $\wt C$ is as in the statement of Theorem \ref{th:splitting}. 
We fix the domain $\Omega$, the retraction $\rho$, and the extension operator $T$ for the rest of the proof.  

The following lemma provides the key geometric ingredient in the proof of Theorem \ref{th:splitting}.

%
%
%
%
\begin{lemma}\label{lem:CP}
Assume that $X$ is a closed complex subvariety of $\C^N$. Let $(A,B)$ be a Cartan pair in $X$ such that 
$C:=A\cap B\subset X_\reg$. Let $\Omega \Subset \C^N$  be an open Stein neighborhood of $C$ and
$\rho\colon \Omega\to\Omega\cap X$ be a holomorphic retraction (\ref{eq:rho}).  
Let $U_A,U_B$ be open sets in $\C^N$ such that $A\subset U_A$, $B\subset U_B$, and $U_A\cap U_B \Subset \Omega$. 
Then there exists a family of smoothly bounded strongly pseudoconvex Cartan pairs $(A_t,B_t)$ in $\C^N$, depending 
smoothly on the parameter $t\in [0,t_0]$ for some $t_0>0$, satisfying the following properties:
\begin{itemize}
\item[\rm (i)]   For any pair of numbers $t,\tau$ such that $0\le t< \tau \le t_0$ we have 
$A \subset A_t \subset A_\tau \Subset U_A$, $B \subset B_t \subset B_\tau \Subset U_B$, and 
\[
		\biggl(\, \bigcup_{t\in [0,t_0]} \overline{A_t\setminus B_t}\biggr) \cap 
		\biggl(\, \bigcup_{t\in [0,t_0]} \overline{B_t\setminus A_t}\biggr) =\emptyset.
\]
\item[\rm (ii)]  Set $C_t:=A_t\cap B_t$. For any pair of numbers $t,\tau$ such that $0\le t< \tau \le t_0$ the distances 
$\dist(A_t,\C^N\setminus A_\tau)$, $\dist(B_t,\C^N\setminus B_\tau)$, and $\dist(C_t,\C^N\setminus C_\tau)$ are $\ge \tau-t>0$.
\item[\rm (iii)]  For every $t\in [0,t_0]$ the boundaries $bA_t$, $bB_t$, and $bC_t$ intersect $X$ transversely at any 
inter\-section point belonging to $\Omega\cap X$.
\item[\rm (iv)]  $\rho(C_t) = C_t \cap X$ for every $t\in[0,t_0]$. 
\end{itemize}
\end{lemma}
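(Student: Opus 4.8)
The plan is to sharpen the standard outside approximation of a Cartan pair by smoothly bounded strongly pseudoconvex Cartan pairs (property (b), cf.\ \cite[Proposition 5.7.3]{FF:book}) in two ways: to make it into a smooth monotone one-parameter family with the uniform collar bounds (i)--(ii), and to make the construction near $C$ compatible with the retraction $\rho$ so as to obtain (iii) and (iv). Since $X$ is a closed subvariety of $\C^N$, the pair $(A,B)$ is also a Cartan pair in $\C^N$ by property (a); thus $A$, $B$, $C$, $D=A\cup B$ are Stein compacts in $\C^N$, each with a neighbourhood basis of smoothly bounded strongly pseudoconvex domains cut out by smooth strictly plurisubharmonic defining functions.

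The decisive new point is the construction of the overlap $C_t$. As $C\subset X_\reg$ and $\Omega\cap X$ is a complex submanifold of $\C^N$, I would first pick a smoothly bounded strongly pseudoconvex neighbourhood $W$ of $C$ in $\Omega\cap X$ with $\overline W\Subset\Omega\cap X$ and a defining function $r$ for it (smooth, strictly plurisubharmonic near $\overline W$, with $W=\{r<0\}$ and $dr\neq0$ near $\{r=0\}$), rescaled so that $\|dr\|\le1$ on $\{0\le r\le t_0\}$ for some small $t_0>0$; then $C^X_t:=\{r<t\}$, $t\in[0,t_0]$, is a smooth monotone family of strongly pseudoconvex domains in $\Omega\cap X$ with $C\subset C^X_0$. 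The components $\rho_1,\dots,\rho_N$ of $\rho$ are holomorphic near $C$ in $\C^N$, so $|z-\rho(z)|^2=\sum_j|z_j-\rho_j(z)|^2$ is plurisubharmonic there, nonnegative, vanishes exactly on $X$, and has complex Hessian positive definite along the fibres of $\rho$; consequently
\[
	\psi(z)=r(\rho(z))+\lambda\,|z-\rho(z)|^2
\]
is strictly plurisubharmonic near $C$ for every $\lambda>0$, since the first summand contributes positivity transversally to the fibres of $\rho$ and the second along them. Choosing $\lambda$ large makes the sublevel sets of $\psi$ thin tubes about $\overline{C^X_{t_0}}$, so I may fix a neighbourhood $\Omega'\Subset\Omega$ of $C$ on which $\psi$ is defined and strictly plurisubharmonic and which contains $\overline{\{\psi\le t_0\}}$, and set $C_t:=\{z\in\Omega':\psi(z)<t\}$. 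From $\rho\circ\rho=\rho$ and $|z-\rho(z)|^2\ge0$ we get $\psi(\rho(z))=r(\rho(z))\le\psi(z)$, so $z\in C_t\Rightarrow\rho(z)\in C_t\cap X$; combined with $\psi|_{\Omega'\cap X}=r$ this yields $C_t\cap X=C^X_t$ and $\rho(C_t)=C_t\cap X$, proving (iv). Since $t$ is a regular value of $r=\psi|_X$ while $|z-\rho(z)|^2$ has vanishing differential along $X$, one has $d\psi|_{T_pX}=dr|_{T_pX}\neq0$ at each $p\in bC_t\cap X$; hence $bC_t$ is smooth there and meets $X$ transversely, which is (iii) for $C_t$. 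A final rescaling gives $\|d\psi\|\le1$ near $C$ and hence the collar estimate $\dist(C_t,\C^N\setminus C_\tau)\ge\tau-t$.

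Finally I would build $A_t$, $B_t$ with $A_t\cap B_t=C_t$ by running the construction of \cite[Proposition 5.7.3]{FF:book}. Fix disjoint open sets $V_A\supset\overline{A\setminus B}$, $V_B\supset\overline{B\setminus A}$ with $\overline{V_A}\subset U_A$, $\overline{V_B}\subset U_B$, $\overline{V_A}\cap\overline{V_B}=\emptyset$, and a smooth strictly plurisubharmonic defining function $u$ for a strongly pseudoconvex neighbourhood of $D$, normalized so that $D_t:=\{u<t\}$ is strongly pseudoconvex with $\|du\|\le1$ on $\{0\le u\le t_0\}$; then approximate $A$ and $B$ from outside by strongly pseudoconvex domains $A_t\subset D_t$, $B_t\subset D_t$ whose boundaries meet transversally, rounding the corners of $A_t\cap B_t$ and $A_t\cup B_t$ --- but carried out with the extra requirement that, on the overlap region (which sits in $\Omega'$ after shrinking $t_0$), the relevant defining functions be glued to $\psi$ by a regularized maximum, so that $A_t\cap B_t=\{\psi<t\}=C_t$ near $C$; since $C\subset A_t\cap B_t$ is open, $bA_t$ (resp.\ $bB_t$) lies in the interior of $B_t$ (resp.\ $A_t$) near $C$, so $A_t\cap B_t=C_t$ holds globally. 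Regularized maxima and rounded corners of strictly plurisubharmonic defining functions being again such, $A_t$, $B_t$, $A_t\cup B_t=D_t$ and $A_t\cap B_t=C_t$ are smoothly bounded strongly pseudoconvex domains, so $(A_t,B_t)$ is a Cartan pair in $\C^N$; monotonicity and the nestings in (i) are built in, the uniform separatedness holds since $\overline{A_t\setminus B_t}\subset\overline{V_A}$ and $\overline{B_t\setminus A_t}\subset\overline{V_B}$ are disjoint for all $t\in[0,t_0]$, and (ii) follows from $\|da\|,\|db\|\le1$; for the remaining part of (iii) I would additionally arrange, via Sard's theorem, that $[0,t_0]$ consists of regular values of the restrictions to $\Omega\cap X$ of the defining functions of $A_t$ and $B_t$, so that $bA_t$, $bB_t$ meet $X$ transversely at every point of $\Omega\cap X$ (near $C$ they coincide with parts of $bC_t$, already handled).

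I expect the main obstacle to be this last gluing: fabricating a single smooth family of strictly plurisubharmonic defining functions which equals $\psi$ near $C$ (so $A_t\cap B_t$ is exactly the $\rho$-saturated $C_t$ and (iv) is inherited), equals $u$ near $\overline{A\setminus B}$ and $\overline{B\setminus A}$, stays strictly plurisubharmonic through the transition --- which forces the use of a regularized maximum rather than an arbitrary interpolation --- and is at the same time monotone in $t$ with the collar bounds. The rest is bookkeeping attached to the choice of normalized defining functions.
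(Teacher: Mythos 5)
Your construction of the overlap domains is sound and rests on the same mechanism as the paper's: the paper takes defining functions of the form $(\hbox{psh on }X)\circ\rho+M\psi$ with $\psi\ge 0$ a Richberg function vanishing exactly on $X$, while you take $r\circ\rho+\lambda|z-\rho(z)|^2$; since $|z-\rho(z)|^2$ is a sum of squared moduli of holomorphic functions, vanishes precisely on $\Omega\cap X$, and has Levi form positive transverse to $T_zX$ at points of $X$, your $\psi$ is indeed strictly plurisubharmonic near $C$ for every $\lambda>0$, its sublevel sets are $\rho$-saturated (giving (iv)), and $d\psi|_{T_pX}=dr_p$ gives transversality (iii) along $X$. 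Up to this point you have rediscovered the key idea of the proof.

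The genuine gap is the assembly of the pair $(A_t,B_t)$, exactly the step you flag as ``the main obstacle''; as written, your sketch of it is inconsistent. You keep $D_t=\{u<t\}$ a \emph{fat} strongly pseudoconvex neighbourhood of $D$, while $C_t=\{\psi<t\}$ is a thin tube around a piece of $X$, which has real codimension $\ge 2$ in $\C^N$; hence near each interior point $p\in C$ the connected set $(D_t\cap\hbox{ball})\setminus C_t$ must lie entirely in $A_t\setminus B_t$ or entirely in $B_t\setminus A_t$, so one of the closures $\overline{A_t\setminus B_t}$, $\overline{B_t\setminus A_t}$ meets every neighbourhood of $p$ --- contradicting your claimed separation $\overline{A_t\setminus B_t}\subset\overline{V_A}$, $\overline{B_t\setminus A_t}\subset\overline{V_B}$. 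The only consistent picture (the one in the paper) is that $A_t$, $B_t$, and hence $A_t\cup B_t$, are themselves pinched to the $\rho$-saturated tube over the overlap region and fatten only away from it; but then each of $A_t,B_t$ is a \emph{union} of a tube piece and a fat piece, and a regularized maximum produces intersections of sublevel sets, not unions, so ``glue the defining functions of $A_t,B_t$ to $\psi$ by $\rmax$'' does not deliver this. What is actually needed, and what the paper supplies, is to build the total domain $D_t$ itself from a $\rho$-pullback near the overlap (there: $\Phi_1=(\phi-2\epsilon\chi)\circ\rho+M\psi$, patched by $\rmax$ with a globally defined $\Phi_2$ that takes over, with a definite margin, before one leaves $\Omega$), and then to cut $C_t$ out of $D_t$ with a localizing function $\theta\circ\rho+M\psi$ arranged so that $bC_t$ and $bD_t$ coincide over the transition region; only then does $C_t$ separate $D_t$ into two smoothly bounded strongly pseudoconvex halves with $A_t\cap B_t=C_t$ exactly and with the uniform separation in (i). This careful matching of the two defining functions (the cutoff $\chi$, the margins $\epsilon$, and the constants $M,M'$) is substantive, not bookkeeping, and it is missing from your argument. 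The remaining points you raise (Sard for regular values of $\psi$ and of the defining functions restricted to $X$, and rescaling for the collar estimate (ii)) are handled in the same way as in the paper and are fine.
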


\begin{proof}
We shall modify the proof of Proposition 5.7.3 in \cite[p.\ 210]{FF:book} so as to also obtain property (iv) which will be crucial.  
(For a similar result see \cite[Proposition 4.4]{Stopar}.)

We shall use the function $\rmax\{x,y\}$ on $\R^2$, the {\em regularized maximum} of $x$ and $y$ 
(see e.g.\ \cite[p.\ 61]{FF:book}). It depends on a positive parameter which will be chosen as close to zero as needed at 
each application. We have $\max\{x,y\} \le \rmax\{x,y\}$, the two functions equal outside a small conical neighborhood 
of the diagonal $\{x=y\}$, and they can be made arbitrarily close by choosing the parameter small enough. 
The $\rmax$ of two smooth strongly plurisubharmonic functions is still smooth strongly plurisubharmonic. The 
domain $\{\rmax\{\phi,\psi\}<0\}$ is obtained by smoothing the corners of the intersection $\{\phi<0\}\cap \{\psi<0\}$.

Since  $C=A\cap B$ is a Stein compact, there is a smoothly bounded strongly pseudoconvex domain 
$V \Subset U_A\cap U_B$ such that $C\subset V$ and $bV$ intersects $X$ transversely. 
Let $\theta\colon V_0\to \R$ be a smooth strongly plurisubharmonic defining function for $V$, where  
$V_0 \subset U_A\cap U_B$ is an open neighborhood of $\overline{V}$.
Given a subset $A\subset \C^N$ and a number $r>0$, we set 
\[ 
		A(r) = \{z\in \C^N \colon |z-p| < r\ {\rm for\ some\ }  p\in A\}.
\]
It is elementary that $(A\cup B)(r) = A(r) \cup B(r)$ and $(A\cap B)(r)\subset A(r)\cap B(r)$. Since $A$ and $B$ are 
compact and separated,  we also have 
\[	
		(A\cap B)(r)=A(r)\cap B(r),\quad
		 \overline{A(r)\setminus B(r)} \cap \overline{B(r)\setminus A(r)} = \emptyset
\]
for all sufficiently small $r>0$  (cf.\ \cite[Lemma 5.7.4]{FF:book}).  By choosing $r>0$ small enough we 
can also ensure that 
\begin{equation}\label{eq:Cr}
	C(r)=A(r)\cap B(r) \Subset V.
\end{equation}
Fix such a number $r$. Since $A\cup B$ is a Stein compact, there is a smoothly bounded strongly pseudo\-convex 
Stein domain $\Omega_0 \subset \C^N$ such that  
\[
	A\cup B \subset \Omega_0 \Subset A(r)\cup B(r). 
\]
Pick a smooth strongly plurisubharmonic function $\phi \colon \Omega'_0\to\R$ on an 
open set $\Omega'_0\supset \overline \Omega_0$ such that $\Omega_0=\{z\in \Omega'_0\colon \phi(z)<0\}$ and $d\phi\ne 0$ 
on $b\Omega_0=\{\phi=0\}$. We may assume that $\Omega'_0\Subset A(r) \cup B(r)$. Choose a number  $\epsilon_0>0$ 
such that 
\begin{equation}
\label{eq:Omega1}
	\Omega'_1 := \{z\in \Omega'_0\colon \phi(z) < 3 \epsilon_0\} \Subset \Omega'_0.  
\end{equation}

By \cite[Lemma 2.2]{Richberg} 
there exists a smooth function $\psi\ge 0$ on $\C^N$ such that $\{\psi=0\}=X$, $\psi$ is strongly plurisubharmonic on 
$\C^N\setminus X=\{\psi>0\}$, and the Levi form of $\psi$ at any point $z\in X$ is positive except on the 
tangent space $T_z X$. Choose a smooth function $\chi: \C^N\to [0,1]$ which equals $0$ on a neighborhood of 
$\overline{U}_A \cap \overline{U}_B \subset \Omega$ and equals $1$ on a neighborhood of $\C^N\setminus\Omega$.  
Since $\phi$ is strongly plurisubharmonic on $\Omega'_0$, there is a number $\epsilon \in (0,\epsilon_0)$ such that 
the functions $\phi-2\epsilon \chi$ and $\phi+\epsilon \chi$ are strongly plurisubharmonic on $\Omega'_1$; fix such 
$\epsilon$. Given constants $M,M'>0$ (to be determined later) we consider the functions
\begin{equation}
\label{eq:Phi}
	\Phi_1= (\phi - 2\epsilon \chi)\circ\rho + M\psi,\quad
	\Phi_2= \phi - 2\epsilon + \epsilon\chi +M'\psi, \quad
	\Phi = \rmax \bigl\{\Phi_1,\Phi_2\}.
\end{equation}
The function $\Phi_1$ is defined on $\Omega\cap \Omega'_0$ 
while $\Phi_2$ is defined on $\Omega'_0$. We shall see that for suitable choices of $M>0,M'>0$ the function $\Phi$ is 
well defined, smooth and strongly plurisubharmonic on the domain $\Omega'_1=\{\phi<3\epsilon_0\}$ (\ref{eq:Omega1}), 
and for all $t\in \R$ sufficiently close to $0$ we have 
\begin{equation}
\label{eq:Dt}
		A\cup B  \subset D_t:=\{z\in \Omega'_1 \colon \Phi(z) <t\} \Subset \Omega'_1.
\end{equation}

Since $\phi+\epsilon\chi$ is strongly plurisubharmonic on $\Omega'_1$ and $\psi$ is plurisubharmonic on $\C^N$, 
the function $\Phi_2$ is strongly pluri\-sub\-harmonic on $\Omega'_1$ for any choice of $M'>0$. 

Consider now $\Phi_1$. By the choice of $\epsilon$ the function $\phi-2\epsilon \chi$ is strongly plurisubharmonic on 
$\Omega'_1$, whence $\Phi_1$ is strongly plurisubharmonic on $\Omega\cap \Omega'_1$. Indeed, the first summand 
$(\phi- 2\epsilon \chi) \circ \rho$ is plurisubharmonic and its Levi form at any point $z\in \Omega\cap X$ is positive 
definite in the directions tangential to $X$. 
The second summand $M\psi$ is strongly plurisubhamonic on $\C^N \setminus X$, and its Levi form at points of $X$ is 
positive in directions that are not tangential to $X$. Hence the Levi form of the sum is positive everywhere. Observe that 
$\Phi_1>0$ near $b\Omega'_1 \cap \Omega \cap X$ by the definition of $\Omega'_1$ (\ref{eq:Omega1}) and the fact that 
$2\epsilon\chi \le 2\epsilon < 2\epsilon_0$. By choosing $M>0$ sufficiently big we can thus ensure that $\Phi_1>0$ 
near $b\Omega'_1 \cap \Omega$. We fix such $M$ for the rest of the proof.

Next we show that $\Phi = \rmax \bigl\{\Phi_1,\Phi_2\}$ is well defined if the constant $M'>0$ in $\Phi_2$ is chosen big enough. 
We need to ensure that $\Phi_1<\Phi_2$ on the domain of $\Phi_1$ near the boundary of $\Omega$, so $\Phi_2$ takes over in 
$\rmax$ before we run out of the domain of $\Phi_1$. On $X=\{\psi=0\}$ this is clear since near $b\Omega$ we have $\chi=1$ 
and hence $\Phi_1=\phi-2\epsilon < \phi-\epsilon=\Phi_2$. By choosing $M'>0$ sufficiently big we get $\Phi_1< \Phi_2$ on 
a neighborhood of $b\Omega \cap \Omega'_1$; hence $\rmax\{\Phi_1,\Phi_2\}$ is well defined on $\Omega'_1$. By increasing 
$M'$ we can also ensure that $\Phi>0$ near $b\Omega'_1$, so the domains $D_t=\{\Phi<t\}$ for $t$ close to $0$ 
(say $|t|\le t_1$ for some $t_1>0$) 
satisfy (\ref{eq:Dt}). By Sard's theorem and compactness of the level sets of $\Phi$ we can find a nontrivial interval 
$I \subset [-t_1,+t_1]$ which contains no critical values of $\Phi$ and of $\Phi|_{X\cap \overline \Omega}$. 
Hence $D_t$ for $t\in I$ are smoothly bounded strongly pseudoconvex domains intersecting $X$ transversely 
within $\Omega$.

On the intersection of the domain of $\Phi_1$ with the set $\{\chi=0\}$ (in particular, on $U_A\cap U_B \cap \Omega'_1$) 
we have $\Phi_1 = \phi\circ \rho+M\psi \ge \phi\circ\rho$, so on this set the retraction $\rho$ (\ref{eq:rho}) projects 
$\{\Phi_1 < t\}$ to $\{\Phi_1 <t\}  \cap X$. Furthermore, on $X\cap \{\chi=0\}$ we have 
$\Phi_1=\phi > \phi-2\epsilon=\Phi_2$. This shows that the domain $D_t=\{\Phi<t\}$
agrees with $\{\Phi_1 < t\}$ near $X\cap \{\chi=0\}$. It follows that  
\begin{equation}\label{eq:inclusion}
	\rho\left( D_t \cap \{\chi=0\}\right)  = D_t \cap X \cap \{\chi=0\}. 
\end{equation}

It remains to find a Cartan pair decomposition $(A_t,B_t)$ of $D_t$. 
Recall that $\overline{C(r)} \subset V=\{\theta<0\}$ by (\ref{eq:Cr}). 
Replacing $\theta$ by $c\theta$ for a suitably chosen constant $c>0$ we may therefore assume that 
$\theta<\Phi$ on $C(r)\cap \Omega'_1$. Perturbing $\theta$ and $V$ slightly 
we can ensure that the real hypersurfaces $bV$ and $bD_0=\{\Phi=0\}$ intersect transversely. 
The function 
\[
	\phi_C=\rmax\{\phi,\theta\} \colon \Omega'_1 \cap V_0 \to\R
\]
is smooth strongly plurisubharmonic. For every $t\in I$ the set 
\[
	 C'_t:= \{z\in \Omega'_1 \cap V_0 \cap X \colon \phi_C(z)< t\} \subset X_\reg
\]
is a smoothly bounded strongly pseudoconvex domain.
We have $C(r)\cap X \subset C'_t$ in view of (\ref{eq:Cr}) and $C'_t\subset D'_t:=D_t \cap X$ since $\phi_C\ge \phi$. 
As  $\theta<\phi$ on $\overline{C(r)} \cap \Omega'_1\, \cap X$, we have $\phi_C=\phi$ there,
so the boundaries $bC'_t$ and $bD'_t$ coincide along their intersection with the compact set $\overline{C(r)} \cap X$. 
Hence $C'_t$ separates $D'_t$ in the sense of a Cartan pair, i.e., we have $D'_t=A'_t\cup B'_t$ and $A'_t\cap B'_t=C'_t$.
Set  
\[
	\Theta =\rmax\bigl\{ \phi_C\circ \rho + M\psi, \Phi\bigr\},\quad C_t=\{\Theta<t\},
\]
where $\Phi=\rmax\{\Phi_1,\Phi_2\}$ (\ref{eq:Phi}) and $M$ is the constant in the definition of $\Phi_1$. 
One easily verifies that $C_t$ is a strongly pseudoconvex domain which separates $D_t$ into a 
Cartan pair $(A_t,B_t)$ with $D_t=A_t\cap B_t$ and $C_t=A_t\cap B_t$. 
It follows from (\ref{eq:inclusion})  that $C_t$ satisfies Lemma \ref{lem:CP}-(iv). 
By decreasing the parameter interval $I$ we ensure that $\Theta$ and $\Theta|_X$ have no critical values in $I$, 
so the boundaries $bC_t$ for $t\in I$ are smooth and intersect $X$ transversely. The same is then true for the domains 
$A_t$ and $B_t$ since their boundaries are contained in $bD_t\cup bC_t$. Reparametrizing the $t$ variable and changing 
the functions $\Phi$ and $\Theta$ by an additive constant we may assume that $I=[0,t_0]$ for some $t_0>0$ and 
property (ii) holds. The remaining properties of $A_t,B_t$ and $C_t$ follow directly from the construction. 
\end{proof}

Given an open set $U\subset X$ and a number $\delta>0$, we shall use the notation 
\[
	U(\delta) =\{z\in X\colon \dist(z,U)< \delta\}.
\]
Recall that $s$ is the spray (\ref{eq:spray}) and $M_1$ is the constant from Lemma \ref{lem:lifting}.

The following lemma is a special case of \cite[Lemma 4.5]{FF:Acta}. 

%
%
%
%
\begin{lemma}\label{lem:ActaLemma4.5}
Let $U_1 \subset X\setminus X'$ be the open set from Lemma \ref{lem:lifting}. There exist constants $\delta_0>0$ (small) 
and $M_2>0$ (big) with the following property. Let $0< \delta <\delta_0$ and $0< 4\epsilon <\delta$. Let $U$ be an open 
set in $X$ such that $U(\delta)\subset U_1$. Assume that $\alpha,\beta,\gamma \colon U(\delta)\to X$ are holomorphic maps 
which are $\epsilon$-close to the identity on $U(\delta)$. Then $\wt \gamma:= \beta^{-1} \circ \gamma \circ \alpha \colon U\to X$ 
is a well defined holomorphic map. Write 
\begin{eqnarray*}
	\alpha(z) = s(z,a(z)), &\quad& \beta(z)=s(z,b(z)), \\
            \gamma(z)=s(z,c(z)), &\quad& \wt\gamma(x) = s(z, \wt c(z)), 
\end{eqnarray*}
where $a,b,c$ are holomorphic sections of the vector bundle $E|_{U(\delta)}$ and $\wt c$ is a holomorphic 
section of $E|_U$ furnished by Lemma \ref{lem:lifting}. If $c=b-a$ holds on $U(\delta)$, then 
\[
	||\wt c||_U\le M_2\delta^{-1} \epsilon^2 \quad \text{and}\quad 
	\dist_U(\wt \gamma,\Id) \le M_1M_2\delta^{-1} \epsilon^2.
\]
\end{lemma}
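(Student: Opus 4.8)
The plan is to reduce the statement to the manifold case, since \cite[Lemma 4.5]{FF:Acta} is exactly this estimate in the nonsingular setting, and the only new feature here is the presence of the subvariety $X'$. Concretely, I would work in the ambient Euclidean space: recall that in the proof of Theorem \ref{th:splitting} a Stein neighborhood of $D=A\cup B$ has already been embedded as a closed subvariety of $\C^N$, the spray $s$ of \eqref{eq:spray} extends to a local holomorphic spray on $\C^N$, and the vector fields $V_j$ generating $\cE=\cJ^{n_0}_{X'}\cT_X$ are honest holomorphic vector fields on $\C^N$ tangent to $X$. On the set $U_1\subset X\setminus X'$ the vertical derivative $Vd(s)$ is an isomorphism from $E|_{U_1}$ onto $TU_1$, exactly as in the manifold case, and the composition estimate is a purely local computation involving the Taylor expansion of $s$ in the fiber variable. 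So the strategy is: invoke the cited manifold lemma verbatim on the complex manifold $U_1$ (which is an open subset of $X_\reg$, hence a genuine complex manifold), with the spray $s|_{U_1}$ playing the role of the spray in \cite{FF:Acta}.

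The steps, in order, are as follows. First I would note that $U_1\Subset X_\reg$ by construction in Lemma \ref{lem:lifting}, so $U_1$ is a complex manifold and $E|_{U_1}$ is a holomorphic vector bundle with $Vd(s)\colon E|_{U_1}\to TU_1$ an isomorphism. Second, I would observe that all four maps $\alpha,\beta,\gamma,\wt\gamma$ take values in $X$ near $U\subset U_1$: for $\gamma$ this is the hypothesis of Theorem \ref{th:splitting}; for $\alpha$ and $\beta$ it is because they are of the form $s(z,\cdot)$ with $z\in U_1\subset X$ and $s$ preserves $X$ in the fiber direction (the vector fields $V_j$ are tangent to $X$); and since $\beta$ is biholomorphic onto a neighborhood of $U$ inside $X_\reg$ when $\epsilon$ is small (by Lemma \ref{lem:closetoId}), the composition $\wt\gamma=\beta^{-1}\circ\gamma\circ\alpha$ is a well-defined holomorphic self-map of a neighborhood of $U$ in $X$, close to the identity, hence also of the form $s(z,\wt c(z))$ by Lemma \ref{lem:lifting}. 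Third, I would apply \cite[Lemma 4.5]{FF:Acta} to the manifold $U_1$, the spray $s|_{U_1}$, and the maps $\alpha,\beta,\gamma$ restricted to $U(\delta)\subset U_1$: that lemma produces constants $\delta_0>0$ and $M_2>0$, depending only on $s$ and $U_1$, such that $c=b-a$ on $U(\delta)$ implies $\|\wt c\|_U\le M_2\delta^{-1}\epsilon^2$, and then the second inequality $\dist_U(\wt\gamma,\Id)\le M_1 M_2\delta^{-1}\epsilon^2$ follows from the norm comparison in Lemma \ref{lem:lifting}, namely $\dist_U(\wt\gamma,\Id)\le M_1\|\wt c\|_U$.

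The only point requiring a little care — and the one I would flag as the main obstacle — is making sure that all the intermediate maps genuinely map (a neighborhood of) $U$ into $X$ rather than merely into $\C^N$, so that the manifold lemma applied on $U_1\subset X_\reg$ is legitimate and not just the Euclidean computation in disguise. This is precisely where the choice of the vector fields $V_j$ as sections of $\cT_X$ (tangent to $X$) pays off: the flows $\phi^j_t$ preserve $X$, so $s(z,t)\in X$ whenever $z\in X$, and therefore every map built by successive compositions and lifts via $s$ stays within $X$. Once this is recorded, the statement is literally \cite[Lemma 4.5]{FF:Acta} read inside the complex manifold $U_1$, with the ambient $\C^N$ and the subvariety $X'$ playing no role in the estimate itself — their only role was in the earlier construction of a spray whose component vector fields vanish to high order along $X'$, which is what will later allow property (c) of Theorem \ref{th:splitting}. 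Hence no new analytic input beyond \cite[Lemma 4.5]{FF:Acta} and Lemma \ref{lem:lifting} is needed, and the proof is short.
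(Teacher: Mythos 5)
Your proposal is correct and matches the paper, which disposes of this lemma with the single observation that it is a special case of \cite[Lemma 4.5]{FF:Acta}: since $U_1\subset X\setminus X'\subset X_\reg$ is a complex manifold and the spray $s$ preserves $X$ (the $V_j$ being tangent to $X$), the manifold lemma applies verbatim there, with the second estimate following from the norm comparison in Lemma \ref{lem:lifting}. Your write-up simply makes explicit the reduction that the paper leaves implicit.
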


The next lemma provides a solution of the Cousin-I problem with bounds on the family of strongly pseudoconvex domains 
$D_t=A_t\cup B_t$ from Lemma \ref{lem:CP}, intersected with the subvariety $X$. We denote by $\cH^\infty(D)$ the 
Banach space of all bounded holomorphic functions on $D$.

%
%
%
%
\begin{lemma}
\label{lem:Cousin}
Let $(A_t,B_t)$ $(t\in [0,t_0])$ be a family of strongly pseudoconvex Cartan pairs furnished by  Lemma \ref{lem:CP}. 
There is constant $M_3>0$ with the following property. 
For every $t\in [0,t_0]$ and $c\in \cH^\infty(C_t\cap X)$ there exist functions 
$a\in\cH^\infty(A_t)$ and $b\in \cH^\infty(B_t)$ such that
\[
	c=b-a \ \text{on}\  C_t \cap X, \quad
	||a||_{A_t} \le M_3 ||c||_{C_t\cap X}, \quad
	||b||_{B_t} \le M_3 ||c||_{C_t\cap X}. 
\] 
Functions $a$ and $b$ are given by bounded linear operators applied to $c$.
\end{lemma}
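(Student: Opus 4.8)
The plan is to reduce the Cousin-I problem on $C_t\cap X$ to a Cousin-I problem on the ambient strongly pseudoconvex domain $C_t\subset\C^N$, where the required bounded solution operators are already available. First I would use the holomorphic retraction $\rho\colon\Omega\to\Omega\cap X$ of \eqref{eq:rho} together with property (iv) of Lemma \ref{lem:CP}, namely $\rho(C_t)=C_t\cap X$, to extend a given $c\in\cH^\infty(C_t\cap X)$ to the function $Tc=c\circ\rho\in\cH^\infty(C_t)$ (see \eqref{eq:T}), with $\|Tc\|_{C_t}=\|c\|_{C_t\cap X}$. Similarly I must make sure that the domains $A_t,B_t$ are contained in $\Omega$ (or at least that their intersection $C_t$ is), which holds by the construction in Lemma \ref{lem:CP} since $C_t\subset V\Subset U_A\cap U_B\Subset\Omega$; for the ambient splitting one only needs $C_t=A_t\cap B_t$ to lie in $\Omega$.

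Next I would apply the bounded solution of the Cousin-I problem on a strongly pseudoconvex Cartan pair, property (c) listed after Definition \ref{def:CP} (i.e.\ \cite[Lemma 5.8.2, p.\ 212]{FF:book}), to the Cartan pair $(A_t,B_t)$ in $\C^N$ and the datum $Tc\in\cH^\infty(C_t)$. This produces $\wt a\in\cH^\infty(A_t)$ and $\wt b\in\cH^\infty(B_t)$ with $\wt b-\wt a=Tc$ on $C_t$ and $\|\wt a\|_{A_t}+\|\wt b\|_{B_t}\le M\|Tc\|_{C_t}=M\|c\|_{C_t\cap X}$ for a constant $M$ depending only on the Cartan pair. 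Restricting to $X$, the functions $a:=\wt a|_{A_t\cap X}$ and $b:=\wt b|_{B_t\cap X}$ satisfy $b-a=(Tc)|_{C_t\cap X}=c$ on $C_t\cap X$ (using $\rho|_{C_t\cap X}=\Id$), with the stated sup-norm bounds, and they are obtained by bounded linear operators applied to $c$ (composition of the linear extension $T$ with the linear Cousin-I operators and the linear restriction). The one subtlety is uniformity of the constant $M_3$ in $t\in[0,t_0]$: this follows because the family $(A_t,B_t)$ depends smoothly on $t$ on a compact parameter interval, so the norms of the solution operators for the Cousin-I problem vary continuously and hence stay bounded; alternatively one fixes a single slightly larger Cartan pair containing all $(A_t,B_t)$ and notes the operators restrict.

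The main obstacle I anticipate is precisely this last point — ensuring the constant is independent of $t$. The cleanest way around it is to invoke the parametrized version of the bounded Cousin-I lemma, or simply to observe that by property (i)-(ii) of Lemma \ref{lem:CP} all the Cartan pairs $(A_t,B_t)$ are sandwiched between $(A,B)$ and a fixed pair $(\overline{A_{t_0}},\overline{B_{t_0}})$ with uniformly controlled geometry (smooth strongly pseudoconvex boundaries varying in a $\cC^\ell$-bounded family), so the Henkin–Ram\'\i rez type integral kernels producing the Cousin-I solution have uniformly bounded operator norms. Everything else is a routine diagram chase through the linear maps $c\mapsto Tc\mapsto(\wt a,\wt b)\mapsto(a,b)$.
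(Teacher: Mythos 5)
Your argument is correct and essentially the paper's: extend $c$ across $C_t$ by $Tc=c\circ\rho$ using Lemma \ref{lem:CP}-(iv), split $Tc$ on the ambient strongly pseudoconvex Cartan pair $(A_t,B_t)$ with bounds uniform in $t$ (the paper merely inlines the proof of the bounded Cousin-I lemma---a cutoff function coming from the $t$-independent separation in Lemma \ref{lem:CP}-(i) plus sup-norm bounded $\dibar$-solution operators $S_t$ depending smoothly on the compact parameter interval---precisely to make that uniformity explicit, which is your main anticipated obstacle and is resolved the same way), and observe all maps are bounded and linear. The only slip is your final restriction of $\wt a,\wt b$ to $X$: the lemma asks for $a\in\cH^\infty(A_t)$ and $b\in\cH^\infty(B_t)$ on the ambient domains with norms over $A_t$ and $B_t$ (they are later plugged into the spray $s$ on neighborhoods in $\C^N$), and your unrestricted $\wt a,\wt b$ already satisfy everything, so simply omit that last step; likewise the ``fix one larger Cartan pair and restrict'' alternative does not quite work, since $Tc$ need not be defined on $C_{t_0}$, but your primary compactness/smooth-dependence argument suffices.
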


\begin{proof}
We begin by finding a constant $M_3>0$ independent of $t\in[0,t_0]$ and linear operators 
\[
	\cA_t \colon \cH^\infty(C_t)\to \cH^\infty(A_t),\quad 
	\cB_t \colon \cH^\infty(C_t)\to \cH^\infty(B_t)
\]
such that for every $g\in \cH^\infty(C_t)$ $(t\in [0,t_0])$ we have that
\begin{equation} \label{eq:diff}
	g = \cA_t(g)-\cB_t(g) \ \ \text{on} \ \ C_t
\end{equation}
and the estimates
\begin{equation}\label{eq:estimate}
		||\cA_t (g)||_{A_t} \le M_3 ||g||_{C_t}, \qquad ||\cB_t (g)||_{B_t} \le M_3 ||g||_{C_t}.
\end{equation}
The proof is similar to that of \cite[Lemma 5.8.2, p.\ 212]{FF:book} and uses standard techniques; we include it for completeness. 

In view of Lemma \ref{lem:CP}-(i) there is a smooth function $\xi\colon \C^N\to[0,1]$ such that $\xi=0$ on 
$\bigcup_{t\in [0,1]} \overline{A_t\setminus B_t}$ and $\xi=1$ on $\bigcup_{t\in [0,1]} \overline{B_t\setminus A_t}$. 
For any $g \in \cH^\infty(C_t)$ the product $\xi g$ extends to a bounded smooth function on the domain $A_t$ that vanishes on 
$A_t\setminus B_t$, and $(\xi-1)g$ extends to a bounded smooth function on $B_t$ that vanishes on $B_t\setminus A_t$. 
Furthermore, $\dibar(\xi g)= \dibar((\xi-1)g)=g\, \dibar \xi$ is a smooth bounded $(0,1)$-form on the strongly pseudoconvex 
domain $D_t=A_t\cup B_t$ with support contained in $C_t=A_t\cap B_t$. 

Let $S_t$ be a sup-norm bounded linear solution operator to the $\dibar$-equation on $D_t$ at the level of $(0,1)$-forms. 
(Such $S_t$ can be found as a Henkin-Ram\'irez integral kernel operator; see the monographs by Henkin and Leiterer 
\cite{HL:TF} or Lieb and Michel \cite{Lieb-Michel}. The operators $S_t$ can be chosen to depend smoothly on the 
parameter $t\in [0,1]$. For small perturbations of a given strongly pseudoconvex domain this is evident from the construction 
and is stated explicitly in the cited sources; for compact families of strongly pseudoconvex domains the result follows by 
applying a smooth partition of unity on the parameter space.) Given $g \in \cH^\infty(C_t)$, set
\[
    \cA_t (g) = \xi g - S_t(g\dibar \xi\bigr) \in\cH^\infty(A_t), \qquad 
    \cB_t (g) = (\xi-1) g	-  S_t(g \dibar \xi\bigr) \in\cH^\infty(B_t).
\]    
It is immediate that these operators satisfy the stated properties. 

By Lemma \ref{lem:CP}-(iv) the map $c \mapsto T(c)= c\circ \rho$ (\ref{eq:T}) induces a linear extension operator 
$\cO(C_t\cap X)\to \cO(C_t)$ satisfying $||Tc||_{C_t}=||c||_{C_t\cap X}$. The compositions
\[
	\cA_t\circ T \colon \cH^\infty(C_t\cap X)\to \cH^\infty(A_t), \qquad
	\cB_t\circ T \colon \cH^\infty(C_t\cap X)\to \cH^\infty(B_t)
\]
are then bounded linear operators satisfying the conclusion of Lemma \ref{lem:Cousin}.    
\end{proof}

\begin{lemma}\label{lem:ActaLemma4.7}
Let $(A_t,B_t)=(A(t),B(t))$ $(t\in [0,t_0])$ be strongly pseudoconvex Cartan pairs furnished by 
Lemma \ref{lem:CP}. Set $C(t)=A(t)\cap B(t)$. 
Let $\delta_0>0$ be chosen as in Lemma \ref{lem:ActaLemma4.5}.
Then there are constants $M_4,M_5 > 0$ satisfying the following property.
Let $0\le t <t+\delta \le t_0$ and $0< \delta <\delta_0$. 
For every holomorphic map $\gamma\colon C(t+\delta)\cap X \to X$ satisfying 
\[
	  \epsilon:= \dist_{C(t+\delta) \cap X}(\gamma,\Id) < \frac{\delta}{4M_4} 
\] 
there exist holomorphic maps $\alpha\colon A(t+\delta)\to \C^N$ and $\beta\colon B(t+\delta)\to \C^N$, 
tangent to the identity map to order $n_0$ along the subvariety $X'$ and satisfying the estimates
\begin{equation}\label{eq:est-alpha}
	\dist_{A(t+\delta)}(\alpha,\Id)< M_4\,\epsilon,\quad  
	\dist_{B(t+\delta)}(\beta,\Id) < M_4\,\epsilon,
\end{equation}
such that
\[
	\wt\gamma = \beta^{-1}\circ \gamma\circ\alpha \colon C(t)\cap X \to X
\]
is a well defined holomorphic map satisfying the estimate
\begin{equation}
\label{eq:Acta4.3}
	\dist_{C(t) \cap X}(\wt\gamma,\Id) < M_5\, \delta^{-1} \dist_{C(t+\delta)\cap X}(\gamma,\Id)^2
	= M_5\, \delta^{-1} \epsilon^2.   
\end{equation}
\end{lemma}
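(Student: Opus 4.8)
The plan is to follow the scheme of \cite[proof of Lemma 4.7]{FF:Acta}, inserting the subvariety $X'$ where needed. I would first lift $\gamma$ by Lemma \ref{lem:lifting}: choosing $M_4$ large enough that $\epsilon<\delta/(4M_4)\le\delta_0/(4M_4)$ forces $\dist_{C(t+\delta)\cap X}(\gamma,\Id)<\epsilon_1$, there is a unique holomorphic section $c$ of $E$ over $C(t+\delta)\cap X$ with $\gamma(z)=s(z,c(z))$ and $\|c\|_{C(t+\delta)\cap X}\le M_1\epsilon$. Regarding $c$ as a bounded $\C^m$-valued holomorphic function on $C(t+\delta)\cap X$, I would apply Lemma \ref{lem:Cousin} with parameter $t+\delta$ to each of its components and obtain bounded holomorphic maps $a\colon A(t+\delta)\to\C^m$ and $b\colon B(t+\delta)\to\C^m$ with $c=b-a$ on $C(t+\delta)\cap X$ and $\|a\|_{A(t+\delta)}+\|b\|_{B(t+\delta)}\le C\epsilon$, where $C$ depends only on $m$, $M_1$ and $M_3$.

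Next I would set $\alpha(z)=s(z,a(z))$ on $A(t+\delta)$ and $\beta(z)=s(z,b(z))$ on $B(t+\delta)$. For $\epsilon$ small, which the hypothesis and the choice of $M_4$ guarantee, these are well-defined holomorphic maps into $\C^N$, and since $s(z,w)-z=O(\|w\|)$ uniformly on $\overline{A_{t_0}}\cup\overline{B_{t_0}}$ they satisfy \eqref{eq:est-alpha} once $M_4$ is taken larger than a Lipschitz constant of $s$ in the variable $w$ times $C$. The new point relative to \cite{FF:Acta} is that the vector fields $V_1,\dots,V_m$ generating $s$ lie in $\cJ_{X'}^{n_0}\cT_X$: hence each flow $\phi^j_\tau$ leaves $X'$ invariant and $\phi^j_\tau(z)-z$ vanishes to order $n_0$ along $X'$, so $s(z,w)-z$ vanishes to order $n_0$ along $X'$ in the variable $z$, and therefore $\alpha(z)-z$ and $\beta(z)-z$ vanish to order $n_0$ along $X'$ intersected with their respective domains. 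Thus $\alpha$ and $\beta$ are tangent to $\Id$ to order $n_0$ along $X'$, even though their domains meet $X'$; this costs nothing once $s$ is built from vector fields in $\cJ_{X'}^{n_0}\cT_X$.

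It remains to form $\wt\gamma=\beta^{-1}\circ\gamma\circ\alpha$ on $C(t)\cap X$ and to prove \eqref{eq:Acta4.3}. By Lemma \ref{lem:CP}-(ii) the $\delta$-neighborhood $(C(t)\cap X)(\delta)$ is contained in $C(t+\delta)\cap X\subset U_1$, and on it $\alpha,\beta,\gamma$ are all defined and $M_4\epsilon$-close to $\Id$; hence Lemma \ref{lem:ActaLemma4.5} applies with $U=C(t)\cap X$ and $M_4\epsilon$ in the role of its $\epsilon$ (the inequality $4M_4\epsilon<\delta<\delta_0$ being precisely $\epsilon<\delta/(4M_4)$), and it yields that $\wt\gamma\colon C(t)\cap X\to X$ is a well-defined holomorphic map, together with the quadratic bound, provided the $E$-lifts $a',b',c'$ of $\alpha,\beta,\gamma$ over $(C(t)\cap X)(\delta)$ satisfy $c'=b'-a'$. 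This compatibility is the delicate point: $c'=c$, but $a'$ is obtained from $a$ by reading the point $s(z,a(z))\in X$ in the local chart $v\mapsto s(z,v)$, $v\in E_z$, so $a'=\mathrm{pr}_E(a)+O(\|a\|^2)$ and likewise $b'=\mathrm{pr}_E(b)+O(\|b\|^2)$, where $\mathrm{pr}_E$ is the fibrewise projection onto $E$ along $E'=\ker Vd(s)$ over $U_1$; since $c$ is already a section of $E$ this gives $b'-a'=\mathrm{pr}_E(b-a)+O(\epsilon^2)=c+O(\epsilon^2)$ only. I would absorb this discrepancy: let $\gamma'$ be the map over $(C(t)\cap X)(\delta)$ with $E$-lift $b'-a'$; Lemma \ref{lem:ActaLemma4.5} applies verbatim to $(\alpha,\beta,\gamma')$ and gives $\dist_{C(t)\cap X}(\beta^{-1}\circ\gamma'\circ\alpha,\Id)\le M_1M_2\delta^{-1}(M_4\epsilon)^2$, while $\gamma$ and $\gamma'$ are $O(\epsilon^2)$-close on $(C(t)\cap X)(\delta)$ since $s$ is Lipschitz in $w$, whence $\dist_{C(t)\cap X}(\wt\gamma,\beta^{-1}\circ\gamma'\circ\alpha)=O(\epsilon^2)$ because $\alpha$ and $\beta^{-1}$ are bi-Lipschitz with constants near $1$. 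Adding the two estimates and using $\delta<\delta_0<1$ gives \eqref{eq:Acta4.3} with an $M_5$ depending only on $M_1,M_2,M_4$ and $\delta_0$.

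The main obstacle is exactly this reconciliation: $E$ is defined only near $C$, yet $\alpha$ and $\beta$ must live on all of $A(t+\delta)$ and $B(t+\delta)$, so the Cousin solution $a,b$ is necessarily $\C^m$-valued and does not lie in the fibres of $E$, which forces the $O(\epsilon^2)$ absorption above; beyond that, one must keep the nested neighborhoods straight so that every composition is defined, for which the uniform distance estimates of Lemma \ref{lem:CP}-(ii) are precisely what is needed. The rest is a routine transcription of \cite[Lemma 4.7]{FF:Acta}.
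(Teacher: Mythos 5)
Your argument is correct and follows essentially the paper's own proof: lift $\gamma$ to the $E$-valued section $c$ via Lemma \ref{lem:lifting}, split $c=b-a$ with uniform bounds by Lemma \ref{lem:Cousin} applied at the parameter $t+\delta$ (your reading of the parameter is the right one), set $\alpha=s(\cdot,a)$ and $\beta=s(\cdot,b)$, obtain the order-$n_0$ tangency along $X'$ from the fact that the vector fields $V_j$ generating the spray lie in $\cJ_{X'}^{n_0}\cT_X$, and conclude well-definedness and the quadratic bound from Lemma \ref{lem:ActaLemma4.5}; the paper records the constants as $M_4=NM_1^2M_3$ and $M_5=M_2M_4^2$. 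The one genuine divergence is your reconciliation step: the paper applies Lemma \ref{lem:ActaLemma4.5} directly to the $\C^m$-valued Cousin data $a,b$, in the spirit of \cite[Lemma 4.5]{FF:Acta}, whose quadratic estimate only needs $c=b-a$ with $c$ the $E$-lift of $\gamma$ while $a,b$ may be arbitrary small $\C^m$-valued maps; you instead take the statement of Lemma \ref{lem:ActaLemma4.5} literally ($a,b,c$ sections of $E$), pass to the auxiliary map $\gamma'$ whose $E$-lift is $b'-a'$, and absorb the resulting $O(\epsilon^2)$ discrepancy into $M_5$ via $\epsilon^2\le \delta_0\,\delta^{-1}\epsilon^2$. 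That patch is sound — the key estimate $a'=\mathrm{pr}_E(a)+O(\epsilon^2)$ follows because $Vd(s)$ restricted to $E$ is an isomorphism onto $TX$ with uniformly bounded inverse near $C$, and the first part of Lemma \ref{lem:ActaLemma4.5} already gives well-definedness of $\beta^{-1}\circ\gamma\circ\alpha$ without the lift condition — so it costs only a larger constant, and it addresses a compatibility point (Cousin data not lying in the fibres of $E$) that the paper handles by implicitly invoking the more general form of the Acta lemma rather than by an explicit correction.
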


\begin{proof}
On the Banach space $\cH^\infty(D)^N$ we use the norm $||g||=\sum_{j=1}^N ||g_j||$, where $||g_j||$ is the sup-norm on $D$. 
By Lemma \ref{lem:lifting} there is a holomorphic section $c\colon C(t+\delta) \cap X \to E$ of the holomorphic vector bundle 
$E\to U_1$ such that $\gamma(z)=s(z,c(z))$ for $z\in C(t+\delta) \cap X$ and $||c||_{C(t+\delta) \cap X}\le M_1\epsilon$.  
(Here $M_1$ is the constant from Lemma \ref{lem:lifting}.) By Lemma \ref{lem:Cousin} we have $c=b-a$, where 
$a\in\cH^\infty(A_t)^N$ and $b\in \cH^\infty(B_t)^N$ satisfy the estimates
\[
		||a||_{A_t} \le N M_1 M_3 \epsilon, \qquad ||b||_{B_t} \le N M_1 M_3 \epsilon. 
\]
Let $s$ be the spray (\ref{eq:spray}). Set
\begin{eqnarray*}	
	\alpha(z) &=& s(z,a(z)), \qquad z\in A(t+\delta), \\
	\beta(z)  &=& s(z,b(z)), \qquad z\in B(t+\delta). 
\end{eqnarray*}	
The maps $\alpha$ and $\beta$ are tangent to the identity to order $n_0$ along the subvariety $X'$ and satisfy 
$\alpha(A_t \cap X)\subset X$ and $\beta(B_t\cap X)\subset X$. By Lemma \ref{lem:lifting} we have 
\[
	\dist_{A(t+\delta)}(\alpha,\Id)< N M_1^2 M_3\,\epsilon,\quad  
	\dist_{B(t+\delta)}(\beta,\Id) < N M_1^2M_3\,\epsilon. 
\] 
Setting $M_4=N M_1^2M_3$ we get the estimates (\ref{eq:est-alpha}). If the number 
$\epsilon=\dist_{C(t+\delta)}(\gamma,\Id)$ satisfies  $4M_4\epsilon<\delta$,
then by Lemma \ref{lem:ActaLemma4.5} the composition 
$\wt \gamma=\beta^{-1}\circ \gamma\circ \alpha$ is a well defined holomorphic map on $C(t)\cap X$ 
satisfying the estimate (\ref{eq:Acta4.3}) with the constant $M_5= M_2M_4^2$.
\end{proof}

We now complete the proof of Theorem \ref{th:splitting}.
by a recursive process, using Lemma \ref{lem:ActaLemma4.7} at every step.
The initial map $\gamma$ is defined on the set $\wt C\supset C(t_0)\cap X$. For each $k\in\Z_+$ we set  
\[
	t_k = t_0\prod_{j=1}^k (1-2^{-j})\quad \text{and} \quad  \delta_k = t_k - t_{k+1} = t_k 2^{-k-1}.  
\]
The sequence $t_k>0$ is decreasing, $t^* = \lim_{k\to \infty} t_k > 0$, $\delta_k > t^* 2^{-k-1}$ for all $k$, and 
$\sum_{k=0}^\infty \delta_k= t_0-t^*$. Set $A_k=A(t_k)$, $B_k=B(t_k)$, $C_k=C(t_k)=A_k\cap B_k$ and observe that
\[
	\bigcap_{k=0}^\infty A_k = \overline{A(t^*)},\qquad 
	\bigcap_{k=0}^\infty B_k = \overline{B(t^*)},\qquad 
	\bigcap_{k=0}^\infty C_k = \overline{C(t^*)}.
\]

To begin the induction, pick a number $\epsilon_0>0$ such that
$4M_4 \epsilon_0 <\delta_0=t_0/2$. Set $\gamma_0=\gamma$ and  assume that 
$\dist_{C_0\cap X}(\gamma_0,\Id)\le \epsilon_0$. Lemma \ref{lem:ActaLemma4.7} furnishes 
holomorphic maps $\alpha_1\colon A_1\to \C^N$ and $\beta_1\colon B_1\to \C^N$,  satisfying the estimates 
\[
	\dist_{A_{1}}(\alpha_{1},\Id)< M_4\,\epsilon_0, \qquad  
	\dist_{B_{1}}(\beta_{1},\Id) < M_4\,\epsilon_0
\]
(see (\ref{eq:est-alpha})), such that
$ 
	\gamma_1=\beta_1^{-1} \circ \gamma_0 \circ \alpha_1 \colon C_1\cap X \to X
$  
is a well defined holomorphic map satisfying the following estimate (cf.\ (\ref{eq:Acta4.3})):
\[
	\epsilon_1:= \dist_{C_1\cap X}(\gamma_1,\Id) < M_5 \delta_0^{-1} \epsilon^2_0  < 2M\epsilon_0^2
\]
where the constant $M>0$ is given by
\[
	M=M_5/t^*.
\] 
Assuming that $4M_4\epsilon_1 <\delta_1$ (which holds if $\epsilon_0>0$ is small enough),  
Lemma \ref{lem:ActaLemma4.7} furnishes holomorphic maps 
$\alpha_2\colon A_2\to \C^N$ and $\beta_2\colon B_2\to \C^N$ satisfying the estimates 
\[
	\dist_{A_2}(\alpha_{2},\Id)< M_4\,\epsilon_1, \qquad   \dist_{B_{2}}(\beta_{2},\Id) < M_4\,\epsilon_1
\]
and such that 
$
	\gamma_2=\beta_2^{-1}\circ \gamma_1 \circ \alpha_2 \colon C_2\cap X\to X
$ 
is a well defined holomorphic map satisfying the estimate
\[
	\epsilon_2:=\dist_{C_2\cap X}(\gamma_2,\Id)  < M_5 \delta_1^{-1} \epsilon^2_1  < 2^2 M \epsilon_1^2.
\]
Proceeding inductively, we obtain sequences of holomorphic maps
\[
	\alpha_k\colon A_k \to\C^N, \quad \beta_k \colon B_k\to \C^N, 
	\quad \gamma_k\colon C_k\cap X \to X
\] 
such that the following conditions hold for every $k=0,1,2,\ldots$:
\[
	\gamma_{k+1}=\beta_{k+1}^{-1}\circ \gamma_k\circ \alpha_{k+1} \colon C_{k+1}\cap X \to X,
\] 
\begin{equation}\label{eq:est-alphabis}
	\dist_{A_{k+1}}(\alpha_{k+1},\Id)< M_4\,\epsilon_k, \quad  
	\dist_{B_{k+1}}(\beta_{k+1},\Id) < M_4\,\epsilon_k,
\end{equation} 
\begin{equation}
\label{eq:Acta4.4}
	\epsilon_{k+1} := \dist_{C_{k+1}\cap X}(\gamma_{k+1},\Id) 
	< M_5 \delta_k^{-1} \epsilon^2_k  < 2^{k+1} M \epsilon_k^2.       
\end{equation}
The necessary condition for the induction to proceed is that $4M_4\epsilon_k<\delta_k$ holds for each $k=0,1,2,\ldots$. 
By \cite[Lemma 4.8]{FF:Acta} this condition is fulfilled as long as the initial number $\epsilon_0>0$ is chosen small enough,
and the resulting sequence $\epsilon_k>0$, defined recursively by (\ref{eq:Acta4.4}), then converges to zero very rapidly. 
In particular,  we can ensure that 
\begin{equation}\label{eq:eta}
	 M_4 \sum_{j=0}^{\infty} \epsilon_j  <\eta
\end{equation}
where $\eta>0$ is as in the statement of the theorem. 

The estimates (\ref{eq:est-alphabis}) imply that the compositions
\begin{equation}\label{eq:tilde-alpha}
	\wt\alpha_k=\alpha_1\circ \alpha_2\circ \cdots\circ \alpha_k \colon A_k\to \C^N,
	\quad
  	\wt \beta_k=\beta_1 \circ \beta_2 \circ \cdots\circ \beta_k \colon B_k\to \C^N
\end{equation}
are well defined holomorphic maps for $k=1,2,\ldots$  satisfying
\begin{equation}\label{eq:compose-k}
	\wt \beta_{k}\circ  \gamma_{k} = \gamma \circ \wt \alpha_{k}\quad \text{on}  
	\quad C_{k}\cap X.
\end{equation}

As $k\to\infty$, the estimate (\ref{eq:Acta4.4}) and  (\ref{eq:eta}) show that the sequence $\gamma_k$ 
converges to the identity map uniformly on $C(t^*)\cap X$.

Consider now the sequences $\wt\alpha_k$ and $\wt\beta_k$. 
From  (\ref{eq:est-alphabis}) and (\ref{eq:eta})  we clearly get that
\begin{equation}\label{eq:est-tilde-alpha}
	\dist_{A_{k}}(\wt\alpha_k,\Id) < M_4 \sum_{j=0}^{k-1} \epsilon_j <\eta,
	\quad 
	\dist_{B_{k}}(\wt\beta_k,\Id) < M_4 \sum_{j=0}^{k-1} \epsilon_j <\eta.
\end{equation}
Hence $\wt\alpha_k$ and $\wt\beta_k$ are normal families on $A(t^*)$ and $B(t^*)$, respectively.
Passing to convergent subsequences we get holomorphic maps 
$\alpha\colon A(t^*)\to \C^N$ and $\beta\colon B(t^*)\to \C^N$  satisfying
\[
	\dist_{A(t^*)}(\alpha,\Id) < \eta, 	\quad \dist_{B(t^*)}(\beta,\Id) < \eta, \quad 
	\gamma\circ \alpha=\beta \ \text{on}\ C(t^*)\cap X.
\]
The last equation follows from (\ref{eq:compose-k}).  Assuming that the numbers $\epsilon_0>0$ and $\eta>0$  
are  small enough, the maps $\alpha$, $\beta$ and $\gamma$ are biholomorphic on slightly smaller domains in 
view of Lemma \ref{lem:closetoId}.  Finally, since all maps $\alpha_k$ and $\beta_k$ in the sequence are tangent 
to the identity to order $n_0$ along the subvariety $X'$, the same is true for the maps $\alpha$ and $\beta$.
\end{proof}

\begin{remark}
1. The sequences $\wt\alpha_k$ and $\wt\beta_k$ in (\ref{eq:tilde-alpha}) actually converge
uniformly on any compact subset $K_A\subset A(t^*)$ and $K_B\subset B(t^*)$, respectively. 
Indeed, choose open domains $A'$ and $B'$ in $\C^N$ 
such that $K_A\subset A'\Subset A(t^*)$ and $K_B\subset B'\Subset B(t^*)$.
From  (\ref{eq:est-tilde-alpha}) and the Cauchy estimates we infer that these sequence are 
uniformly Lipschitz on  $A'$ and $B'$, respectively. 
From this and  (\ref{eq:est-tilde-alpha}) we easily see that the sequences are uniformly Cauchy, 
and hence uniformly convergent, on $K_A$ and $K_B$, respectively.

2. If $X$ is a Stein manifold with the {\em density property} in the sense of Varolin \cite[\S 4.10]{FF:book} and $(A,B)$ is a 
Cartan pair in $X$ such that the set $C=A\cap B$ is $\cO(X)$-convex, then the conclusion of Theorem \ref{th:splitting} 
holds for any biholomorphic map $\gamma$ on a neighborhood $U\subset X$ of $C$ which is isotopic to the identity map on $C$ 
through a smooth 1-parameter family of biholomorphic maps $\gamma_t\colon U\to \gamma_t(U)\subset X$ $(t\in [0,1])$ such that 
$\gamma_0=\Id$, $\gamma_1=\gamma$, and $\gamma_t(C)$ is $\cO(X)$-convex for every $t\in [0,1]$. The main result of the 
Anders\'en-Lempert theory (cf.\ \cite[Theorem 4.9.2]{FF:book} for $X=\C^n$ and \cite[Theorem 4.10.6]{FF:book} for the 
general case) implies that $\gamma$ can be approximated uniformly on a neighborhood of $C$ by holomorphic automorphisms 
$\phi\in\Aut(X)$. This allows us to write $\gamma=\phi\circ\tilde \gamma$,  where $\tilde\gamma$ is a biholomorphic map 
close to the identity on a neighborhood of $C$. Applying Theorem \ref{th:splitting} gives 
$\tilde \gamma=\tilde\beta\circ\alpha^{-1}$, where $\alpha$ and $\tilde \beta$ are biholomorphic maps close to the 
identity near  $A$ and $B$, respectively. Setting $\beta=\phi\circ\tilde \beta$ gives $\gamma=\beta\circ\alpha^{-1}$.
\qed \end{remark}

\begin{remark}\label{rem:3.2gen}
Theorem \ref{th:splitting} and its proof are amenable to various generalizations. In particular,
since all steps in the proof are obtained by using (possibly nonlinear) operators on various function spaces,
it immediately generalizes  to the parametric case when the data (in particular, the map $\gamma$ to be decomposed
in the form $\gamma=\beta\circ\alpha^{-1}$) depend on parameters. A more ambitious generalization would
amount to also letting the domains of these maps depend on parameters; this may be applicable in various
constructions. A first step in this direction has already been made in \cite{DFW}.
\qed \end{remark}


\section{Functions without critical points in the top dimensional strata}  \label{sec:open-stratum}

In this section we construct holomorphic functions that have no critical points in the regular locus of a Stein space. 
The following result is the main inductive step in the proof of Theorem \ref{th:stratified} given in the following section.

\begin{theorem}
\label{th:th1}
Assume that $X$ is a Stein space, $X'\subset X$ is a closed complex subvariety of $X$ containing $X_\sing$, 
$P=\{p_1,p_2,\ldots\}$ is a closed discrete set in $X'$, $K$ is a compact $\cO(X)$-convex set in $X$ (possibly empty), 
and $f$ is a holomorphic function on a neighborhood of $K\cup X'$ such that $\Crit(f|_{U\setminus X'}) = \emptyset$ 
for some neighborhood $U\subset X$ of $K$. Then for any $\epsilon>0$ and integers $r\in\N$ and $n_k \in\N$ $(k=1,2,\ldots)$ 
there exists a holomorphic function $F\in\cO(X)$ satisfying the following conditions: 
\begin{itemize}
\item[\rm (i)]   $F-f$ vanishes to order $r$ along the subvariety $X'$, 
\item[\rm (ii)]  $F-f$ vanishes to order $n_k$ at the point $p_k\in P$ for every $k=1,2,\ldots$, 
\item[\rm (iii)] $||F-f||_K <\epsilon$, and 
\item[\rm (iv)]  $F$ has no critical points in $X\setminus X'$.
\end{itemize}
\end{theorem}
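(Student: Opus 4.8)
I would reduce the statement to the already known existence of noncritical holomorphic functions on Stein manifolds (\cite{FF:Acta}), performed now on the Stein space $X$ but controlling critical points only on $X\setminus X'\subseteq X_\reg$, using the Genericity and Stability Lemmas to deal with the interpolation along $X'$ and the splitting lemma on complex spaces (Theorem \ref{th:splitting}) to do the gluing. First I would produce a good global approximant. Conditions (i) and (ii) on $F-f$ amount to membership of $F-f$ in $\Gamma(X,\cK)$, where $\cK\subseteq\cO_X$ is the coherent ideal sheaf equal to $\cJ_{X'}^{\,r}$ off $P$ and to $\cJ_{X'}^{\,r}\cap\mgot_{p_k}^{n_k}$ at each $p_k$. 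Since $K$ is $\cO(X)$-convex, $f$ is holomorphic near $K\cup X'$, and $H^1(X,\cK)=0$, Cartan's Theorems A and B give $g_0\in\cO(X)$ with $g_0-f\in\Gamma(X,\cK)$ and $\|g_0-f\|_K$ as small as wanted. Applying Lemma \ref{lem:generic}(iii) inside the closed affine Baire subspace $g_0+\Gamma(X,\cK)\subseteq\cO(X)$ — whose elements all satisfy (i) and (ii) — and intersecting the resulting set of the second category with a small ball about $g_0$, I may assume in addition that $\Crit(g_0|_{X\setminus X'})$ is closed, discrete in $X$, and does not accumulate on $X'$; hence $g_0$ is noncritical on $W\setminus X'$ for some open neighborhood $W$ of $X'$. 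Finally, choosing $r$ large at the outset and the approximation tight, Lemma \ref{lem:stability}, applied on a compact neighborhood of $K$ inside the domain of $f$ (shrinking $U$ and, if necessary, replacing $K$ by a larger $\cO(X)$-convex compact $L$ with $K\subset\mathring L$, harmless since (iii) is only asked on the original $K$), shows $g_0$ is also noncritical on a neighborhood of $K$ minus $X'$. It then remains to modify $g_0$ by a correction lying in $\Gamma(X,\cK)$, of size $<\epsilon/2$ on $K$, so as to remove its remaining critical points, all of which lie in $X\setminus(X'\cup W)\subseteq X_\reg$.

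For this removal I would run the standard Oka–Morse induction of \cite{FF:Acta}. Fix a smooth strongly plurisubharmonic exhaustion $\rho$ of $X$ whose critical points, by a generic choice, lie in $X_\reg$ and form a discrete set, and exhaust $X$ by $\cO(X)$-convex compacts $L_0\subset L_1\subset\cdots$, with $L_0$ the compact from the previous step, such that each $L_{j+1}$ arises from $L_j$ either (a) by a convex bump lying in a holomorphic coordinate chart inside $X_\reg$, or (b) by a ``critical'' bump as $\rho$ crosses one of its critical points. Inductively I construct $g_j$ (holomorphic near $L_j$), noncritical on $X\setminus X'$ over a neighborhood of $L_j$, with $g_{j+1}-g_j\in\Gamma(X,\cK)$ and $\|g_{j+1}-g_j\|_{L_j}<2^{-j-1}\epsilon$. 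In case (a): the overlap $C=L_j\cap(\text{bump})$ lies in $X_\reg\setminus X'$, so I use the Runge approximation theorem for noncritical functions on polynomially convex subsets of $\C^n$ (\cite[Theorem 3.1]{FF:Acta}) to produce, near the bump, a function noncritical off $X'$ that approximates $g_j$ on $C$ and agrees with $g_j$ to high order along $X'$ and to the required orders at the finitely many $p_k$ in the bump; then, as in \cite{FF:Acta}, pass in a coordinate chart around $C$ to the transition biholomorphism $\gamma$ (close to $\Id$) with $g_j=(\text{new function})\circ\gamma$ near $C$, split it by Theorem \ref{th:splitting} as $\gamma\circ\alpha=\beta$ with $\alpha,\beta$ close to $\Id$ near $L_j$, resp.\ the bump, and \emph{tangent to $\Id$ along $X'$ to a prescribed finite order $n_0\ge r$ (and $\ge$ the finitely many $n_k$ involved)}, and set $g_{j+1}=g_j\circ\alpha$ near $L_j$ and $=(\text{new function})\circ\beta$ near the bump. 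The tangency along $X'$ forces $g_{j+1}-g_j\in\Gamma(X,\cK)$ and makes $\alpha,\beta$ restrict to the identity on $X'$, so noncriticality off $X'$ is preserved on the bump; conclusion (b) of Theorem \ref{th:splitting} keeps the change on $L_j$ small; and Lemma \ref{lem:stability} (with $r$ fixed large) ensures no critical points are created near $X'$. Case (b) is a change of topology taking place entirely in $X_\reg$, away from $X'$, and is handled exactly as in \cite{FF:Acta}. The limit $F=\lim_j g_j$ exists in $\cO(X)$ (the estimates make the sequence Cauchy on every $L_m$); it satisfies (iii) by summing, (i) and (ii) because each correction lies in the closed subspace $\Gamma(X,\cK)$ (Closedness Theorem) and these are local conditions near $X'$ and near $P$, and (iv) because near every point of $X\setminus X'$ the sequence eventually stabilizes to a function noncritical there.

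\textbf{Main obstacle.} The delicate point — and the reason Theorem \ref{th:splitting} had to be proved on complex spaces — is the interaction of the gluing with the singular locus: one must glue noncritical pieces across an overlap lying in $X_\reg\setminus X'$ while keeping the function pinned down, to a prescribed finite order, along the possibly nearby singular subvariety $X'$, since (Example \ref{ex:null2}) noncriticality is not stable under arbitrary small perturbations near $X_\sing$. This is exactly what conclusion (c) of Theorem \ref{th:splitting} together with the Stability Lemma supplies; the technical core of the proof is to combine them coherently — choosing the order of tangency in Theorem \ref{th:splitting} large compared with $r$ and with the finitely many $n_k$ relevant at each stage, and the exponent of $\cJ_{X'}$ in the Stability Lemma accordingly — and to check that the Morse-theoretic steps never interfere with $X'$. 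A further bookkeeping point is that each $g_j$ is defined only near $L_j$, so the global conditions (i), (ii) must be read as local conditions near $X'$ and at the $p_k$ that pass to the limit, and the approximation estimates must be arranged to make $\{g_j\}$ Cauchy on each $L_m$, which is routine.
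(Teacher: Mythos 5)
Your opening paragraph is essentially the paper's Lemma \ref{lem:extension} (global correction in the ideal $\cE\,\cJ_{X'}^r$ via Theorems A and B, then the Genericity and Stability Lemmas), and your bump step is the core of Lemma \ref{lem:th1} (approximation by \cite[Theorem 3.1]{FF:Acta} on the bump, transition map, splitting by Theorem \ref{th:splitting} with tangency along $X'$). But the way you organize the induction has a genuine gap. Your $g_j$ live only near $L_j$, and you allow exactly two kinds of steps: a convex bump in a chart inside $X_\reg$, or a ``critical'' bump in $X_\reg$ away from $X'$. Since $X'$ is a noncompact closed subvariety containing $X_\sing$, every shell $L_{j+1}\setminus L_j$ eventually meets $X'$ (and $X_\sing$), so it cannot be covered by bumps of these two types: some step must (re)extend the function across new stretches of $X'$, matching $f$ to order $r$ there, interpolating the $n_k$-jets at the $p_k\in P\subset X'$, and remaining noncritical on a deleted neighborhood of $X'$. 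Neither of your cases supplies this, and if you instead let a bump meet $X'\cap X_\reg$, \cite[Theorem 3.1]{FF:Acta} gives no interpolation along a subvariety and no control of where the critical points sit. Moreover, on a singular space a Morse exhaustion cannot have all its topology changes at critical points ``away from $X'$'': the restrictions $\rho|_{X'}$ and $\rho|_{X_\sing}$ have critical points that no generic perturbation can push off $X'$, so importing the critical case of \cite{FF:Acta} wholesale is not justified. The paper avoids all of this by never running an induction along $X'$ at all: at each stage of the outer induction the function is made \emph{globally} holomorphic again by a Cartan-theoretic correction (the end of Lemma \ref{lem:th1}), with Lemmas \ref{lem:generic} and \ref{lem:stability} keeping it noncritical near the new parts of $X'$, and the bump chain is taken from \cite[Lemma 8.4]{FP2} starting from a set $A_0\supset K\cup(L\cap X')$, so that every bump $B_j$ lies in $L\setminus X'\subset X_\reg$ and no Morse-critical case is ever needed.

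A secondary point: ``the sequence eventually stabilizes'' is not what makes the limit noncritical. As in the paper's conditions (e) and (f) in the proof of Theorem \ref{th:th1}, the tolerances $\epsilon_j$ and the orders $r_j$ must be chosen \emph{adaptively after} $g_j$ is known, because the threshold in Lemma \ref{lem:stability} depends on the function and on the compact pair; fixing one large $r$ at the outset and requiring only $\|g_{j+1}-g_j\|_{L_j}<2^{-j-1}\epsilon$ does not guarantee that the tail of the series preserves noncriticality near $X'$ or on the earlier compacts.
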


Applying Theorem \ref{th:th1} with the subvariety $X'=X_\sing$ we find holomorphic functions on $X$ that have no 
critical points in the smooth part $X_\reg$. 

\begin{remark}\label{rem:th1}
Theorem \ref{th:th1} implies at no cost the following result. Let $A=\{a_j\}$ be a closed discrete set in $X$ contained in 
$X\setminus (K\cup X')$. Then there exists a function $F\in \cO(X)$ satisfying conditions (i)--(iii) and also the condition
\begin{itemize}
\item[\rm (iv')] $\Crit(F|_{X\setminus X'}) = A$.
\end{itemize}
Indeed, choose any germs $g_{j}\in \cO_{X,a_j}$ at points $a_j\in A$ and apply Theorem \ref{th:th1} with the subvariety 
$X'_0=A \cup X'$, the discrete set $P_0=A\cup P$, and the function $f$ extended by $g_j$ to a small neighborhood of 
the point $a_j\in A$.  
\qed \end{remark}

We begin with a couple lemmas. The first one shows that we can replace the function $f$ in Theorem \ref{th:th1} by a 
function in $\cO(X)$.

\begin{lemma}
\label{lem:extension}
{\rm (Assumptions as in Theorem \ref{th:th1}.)} Let $L$ be a compact $\cO(L)$-convex set in $X$ such that $K\subset \mathring L$. 
Then there exists a function $\tilde f\in \cO(X)$ with the following properties:
\begin{itemize}
\item[\rm (a)]  $\tilde f -f$ vanishes to order $r$ along the subvariety $X'$, 
\item[\rm (b)]  $\tilde f-f$ vanishes to order $n_k$ at the point $p_k\in P$ for every $k=1,2,\ldots$, 
\item[\rm (c)]  $||\tilde f - f||_K <\epsilon$, and 
\item[\rm (d)]  there is a neighborhood $W\subset X$ of the compact set $K\cup (L\cap X')$ such that $\tilde f$ has no 
critical points in the set $W\setminus X'$.
\end{itemize}
\end{lemma}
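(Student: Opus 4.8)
The plan is to extend $f$ from a neighborhood of the compact set $K \cup X'$ to all of $X$, first arranging that the extension agrees with $f$ to high order along $X'$ and to the prescribed orders at the points $p_k$, and then using the Stability Lemma (Lemma \ref{lem:stability}) to guarantee that the extension stays noncritical on a neighborhood of $K \cup (L \cap X')$ provided the extension is close enough to $f$ on $L$ and agrees with $f$ to sufficiently high order along $X'$. First I would reduce to the case where the approximation and noncriticality are needed only on the fixed compact set $L$: since $K \subset \mathring L$ and $L$ is $\cO(L)$-convex, and $f$ is noncritical on $U \setminus X'$ for a neighborhood $U$ of $K$, I can shrink so that $f$ is noncritical on an open neighborhood of $(L \cap (\text{closure of a neighborhood of }K)) \setminus X'$ — actually the hypothesis only gives noncriticality near $K$, so the right compact pair to feed into Lemma \ref{lem:stability} is $K \subset L'$ for a slightly larger compact $L'$ with $K \subset \mathring{L'} \subset L' \subset U \cap \mathring L$, on which $f$ is noncritical off $X'$.

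The key steps, in order: (1) Apply Lemma \ref{lem:stability} with the subvariety $X'$ (which contains $X_\sing$) and the pair $K \subset L'$ to obtain an integer $r_0 \in \N$ and $\epsilon_0 > 0$ such that any $g \in \cO(L')$ with $f - g \in \Gamma(L', \cJ^{r_0}_{X'})$ and $\|f-g\|_{L'} < \epsilon_0$ is automatically noncritical on $K \setminus X'$; we may take $r_0 \geq r$ and $\epsilon_0 \leq \epsilon$. (2) Handle the behavior near $L \cap X'$ separately: by Lemma \ref{lem:generic}(iii), a generic holomorphic extension of $f|_{X'}$ agreeing with $f$ to order $r_0$ along $X'$ is noncritical on a deleted neighborhood of $X'$ in $X$; this takes care of noncriticality near $(L \cap X') \setminus$ (a neighborhood of $K$), since on $X'$ itself we impose no noncriticality condition (the definition concerns only strata of positive dimension in the ambient stratification, but here in Lemma \ref{lem:extension} we only need $\tilde f$ noncritical on $W \setminus X'$). (3) Produce the global extension: using the short exact sequence $0 \to \cJ^{r_0}_{X'} \to \cO_X \to \cO_X/\cJ^{r_0}_{X'} \to 0$ and $H^1(X;\cJ^{r_0}_{X'}) = 0$ (Cartan's Theorem B, $X$ Stein), the jet of $f$ along $X'$ extends to some $G \in \cO(X)$ with $G - f \in \Gamma(\cdot, \cJ^{r_0}_{X'})$ near $X'$. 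Then work in the Baire space of functions agreeing with $G$ to order $r_0$ along $X'$ — intersect with the closed affine subspace cutting out the additional jet conditions $(\cdot - f) \in \mgot_{p_k}^{n_k}$ at the finitely many $p_k$ relevant near $L$ (and all $p_k$, since they form a closed discrete set this is still a closed subspace) — and apply the transversality argument of Lemma \ref{lem:generic} within that space to find $\tilde f$ that in addition is noncritical on a deleted neighborhood of $X'$. (4) Finally, combine Runge approximation: since $L'$ (hence $K$) is $\cO(X)$-convex, by the Oka–Weil theorem on Stein spaces I can correct $\tilde f$ by an element of $\cJ^{r_0}_{X'}(X)$ vanishing to order $n_k$ at the $p_k$ to achieve $\|\tilde f - f\|_{L'} < \epsilon_0$ while preserving all jet conditions; here one uses that $f$ itself is only defined near $K$, so the approximation is of $f$ on the $\cO(X)$-convex compact $K$ by a global function in the appropriate jet-constrained closed subspace, which is again a Fréchet/Baire space to which Oka–Weil and the transversality genericity both apply. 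With all of (1)–(4) in hand, conditions (a), (b), (c) hold by construction, and (d) holds by Lemma \ref{lem:stability} near $K$ and by genericity (step 2–3) on a deleted neighborhood of $L \cap X'$; taking $W$ to be the union of these two open sets finishes the proof.

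The main obstacle I anticipate is the bookkeeping in step (4): one must simultaneously (i) stay inside the closed affine subspace of $\cO(X)$ defined by the $r_0$-jet along $X'$ and the $n_k$-jets at the $p_k$, (ii) achieve the sup-norm estimate $< \epsilon_0$ on $L'$, and (iii) retain noncriticality off $X'$ near $L \cap X'$ — and these must be reconciled with the fact that $f$ is a priori only a \emph{local} object near $K \cup X'$, so "approximating $f$" is really "approximating the locally-defined $f$ by the global $\tilde f$ on the $\cO(X)$-convex set $K$." The clean way around this is to fix the global extension $G$ first (step 3 gives a global function agreeing with $f$ to order $r_0$ along $X'$ and to order $n_k$ at $p_k$), then note that the difference $f - G$ is a holomorphic function defined near $K$ lying in the ideal sheaf $\cJ^{r_0}_{X'}$ (locally near $X' \cap K$) and in $\mgot_{p_k}^{n_k}$, and approximate \emph{this difference} on the $\cO(X)$-convex set $K$ by a global section of the corresponding coherent ideal sheaf, which is possible by Oka–Weil applied to that coherent sheaf on the Stein space $X$; adding this approximant to $G$ yields $\tilde f$. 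The transversality step must then be run on this last (nonempty, closed, Baire) affine subspace to also secure (d) away from $K$, exactly as in Lemma \ref{lem:generic}.
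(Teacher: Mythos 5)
Your proposal is correct and follows essentially the same route as the paper: extend $f$ globally with the prescribed jets using Cartan's Theorem B applied to the ideal sheaf combining $\cJ_{X'}^{r}$ with the $\mgot_{p_k}^{n_k}$ conditions, approximate over the $\cO(X)$-convex compact via Theorem A/Oka--Weil within that jet-constrained space (the paper does this concretely with finitely many generators $\xi_i$ and approximated coefficients), and secure (d) by combining the Stability Lemma near $K$ with the Genericity Lemma off $X'$. The small imprecisions (arranging $L'$ to be $\cO(X)$-convex, approximating on $L'$ rather than just $K$) are routine and do not affect the argument.
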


\begin{proof} 
Let $\cE\subset \cO_X$ be the coherent sheaf of ideals whose stalk at any point $p_k\in P$ equals $\mgot_{p_k}^{n_k}$ and 
$\cE_x=\cO_{X,x}$ for every $x\in X\setminus P$. The product 
\begin{equation}
\label{eq:sheafE}
	\wt \cE:=\cE \cJ_{X'}^{r}  \subset \cO_X
\end{equation}
of $\cE$ and the $r$-th power of the ideal sheaf $\cJ_{X'}$ is a coherent sheaf of ideals in $\cO_X$. Consider the short 
exact sequence of sheaf homomorphisms
\[
	0\longrightarrow \wt \cE \longrightarrow \cO_X \longrightarrow \cO_X/\wt\cE \longrightarrow 0.
\]
Since the quotient sheaf $\cO_X/\wt \cE$ is supported on $X'$, the function $f$ determines a section of $\cO_X/\wt \cE$. 
Since $H^1(X;\wt\cE)=0$ by Theorem B, the same section is induced by a function $g\in\cO(X)$. Clearly $g$ satisfies conditions 
(a) and (b) of the lemma (with $g$ in place of $\tilde f$). 
To get condition (c) we proceed as follows. Cartan's Theorem A furnishes sections $\xi_1,\ldots,\xi_m \in\Gamma(X, \wt \cE)$ 
which generate the sheaf $\wt \cE$ over the compact set $K$. By the choice of $g$ the difference $f-g$ is a section of $\wt \cE$ 
over a neighborhood of $K$. Applying Theorem B to the epimorphism of coherent analytic sheaves 
$\cO_X^m \longrightarrow \wt\cE\longrightarrow 0$,  $(h_1,\ldots,h_m)\mapsto \sum_{i=1}^m h_i\xi_i$, 
we obtain $f=g + \sum_{i=1}^m h_i\xi_i$ on a neighborhood of $K$ for some holomorphic functions $h_i\in \cO(K)$. 
By the Oka-Weil theorem we can approximate the $h_i$'s uniformly on $K$ by functions $\tilde h_i\in \cO(X)$. 
The function $\tilde f = g+\sum_{i=1}^m \tilde h_i\xi_i \in\cO(X)$ satisfies properties (a)--(c). 
By the Stability Lemma \ref{lem:stability} and the Genericity Lemma \ref{lem:generic} we can also satisfy condition (d) 
by choosing $\tilde f$ generic and taking $\epsilon>0$ small enough.      
\end{proof}

The next lemma is the main step in the proof of Theorem \ref{th:th1};  here we
use the splitting lemma furnished by Theorem \ref{th:splitting}. 
Another key ingredient is the Runge approximation theorem 
for noncritical holomorphic functions on $\C^n$, furnished by Theorem 3.1 in \cite{FF:Acta}.

\begin{lemma} \label{lem:th1}
{\rm (Assumptions as in Theorem \ref{th:th1}.)} 
Let $L\subset X$ be a compact $\cO(X)$-convex set such that $K\subset \mathring L$. Then there exists a holomorphic 
function $F\in \cO(X)$ which satisfies conditions (i)--(iii) in Theorem \ref{th:th1} and also the following condition:
\begin{itemize}
\item[\rm (iv')] $\Crit(F|_{U'\setminus X'})  = \emptyset$, where $U'\subset X$ is an open neighborhood of $L$.
\end{itemize}
\end{lemma}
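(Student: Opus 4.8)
The plan is to prove Lemma~\ref{lem:th1} by the standard Oka--Grauert style induction on an exhaustion of $X$ by $\cO(X)$-convex compacts, where at each step one enlarges the set on which $F$ is noncritical off $X'$ by gluing with a local noncritical model. First I would use Lemma~\ref{lem:extension} to replace $f$ by a function $\tilde f\in\cO(X)$ that already agrees with $f$ to order $r$ along $X'$ and to order $n_k$ at each $p_k$, is $\epsilon$-close to $f$ on $K$, and is noncritical on $W\setminus X'$ for some neighborhood $W$ of $K\cup(L\cap X')$; by the Genericity Lemma~\ref{lem:generic}(iii) I may in addition assume that $\Crit(\tilde f|_{X\setminus X'})$ is discrete and does not accumulate on $X'$. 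It then suffices to remove these finitely many critical points lying in a neighborhood of $L$, one Cartan-pair step at a time, while keeping $\tilde f$ fixed to high order along $X'$ and the already-prescribed discrete set, and controlling the sup-norm on $K$.

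The core of a single step is the following. Suppose $(A,B)$ is a Cartan pair in $X$ with $A\supset$ (the region where the function is already noncritical off $X'$), with $B$ a small Stein ball contained in $X_\reg\setminus X'$ around a bad critical point $q$, and $C=A\cap B\subset X\setminus X'$. On $B$, which I may take to be a convex (hence polynomially convex) domain in some $\C^n\cong$ a coordinate chart of $X_\reg$, apply the Runge approximation theorem for noncritical functions, \cite[Theorem 3.1]{FF:Acta}, to approximate $\tilde f|_B$ by a function $g\in\cO(B)$ with no critical points on $B$; by choosing the approximation close enough on $\overline C$ there is a biholomorphic map $\gamma$ close to the identity on a neighborhood $\wt C$ of $C$ with $\tilde f\circ\gamma=g$ on $\wt C$ (compare a noncritical function to another with the same value and differential data along a complex line; this is exactly the mechanism used in \cite{FF:Acta}). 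Now invoke Theorem~\ref{th:splitting} with the subvariety $X'$: it yields biholomorphisms $\alpha$ near $A$ and $\beta$ near $B$, both tangent to the identity to order $r$ along $X'$, close to the identity, with $\gamma\circ\alpha=\beta$ on a neighborhood of $C$. Then $\tilde f\circ\alpha$ on $A'$ and $g\circ\beta$ on $B'$ agree on the overlap, hence patch to a holomorphic function $\hat f$ on $A'\cup B'$; it is noncritical off $X'$ on all of $A'\cup B'$ (on $A'$ because $\alpha$ is a biholomorphism fixing $X'$ and $\tilde f$ was noncritical off $X'$ there, on $B'$ because $g$ is noncritical and $\beta$ is a biholomorphism), it is $\epsilon$-close to $\tilde f$ on $K$ if the data were chosen close enough (using that $\alpha\to\Id$), and it still agrees with $f$ to order $r$ along $X'$. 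Since $X$ is Stein and $A'\cup B'$ is $\cO(X)$-convex after a suitable choice, Oka--Weil (with interpolation along $X'$ and at the $p_k$, via Cartan's Theorems A and B as in Lemma~\ref{lem:extension}) lets me approximate $\hat f$ by a global $F\in\cO(X)$ without reintroducing critical points off $X'$ on a slightly smaller set, by the Stability Lemma~\ref{lem:stability}.

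Iterating this over an exhaustion $L\subset L_1\subset L_2\subset\cdots$, at stage $j$ removing all critical points of the current function lying in a neighborhood of $L_j$ (finitely many by discreteness, and since the bad set does not accumulate on $X'$ each lies in $X_\reg\setminus X'$), and arranging the errors to decay geometrically so the limit exists in $\cO(X)$, produces $F\in\cO(X)$ that is noncritical on $U'\setminus X'$ for a neighborhood $U'$ of $L$, still agrees with $f$ to order $r$ along $X'$ and to order $n_k$ at $p_k$, and is $\epsilon$-close to $f$ on $K$. The Stability Lemma is what makes the final Oka--Weil approximation compatible with the noncriticality requirement despite the instability illustrated in Example~\ref{ex:null2}: one forces $F-\hat f$ to vanish to a sufficiently high order $r$ along $X'$ and to be uniformly small.

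The main obstacle, and the step I would spend the most care on, is the interface between the Runge step on $B$ and the splitting step: one must produce the comparison map $\gamma$ close to the identity on $\wt C$ with $\tilde f\circ\gamma=g$ there (so that $\gamma$ may be fed into Theorem~\ref{th:splitting}), and simultaneously guarantee that $\gamma$ can be taken to agree with the identity to order $r$ along $X'$ — which is automatic here only because $C\subset X\setminus X'$, so $\wt C$ can be shrunk to miss $X'$ entirely; the subtlety is rather geometric bookkeeping of the Cartan pairs so that $A$ always contains the growing noncritical region and $C$ stays in $X_\reg\setminus X'$ and away from the prescribed discrete set. A second delicate point is that the set $\Crit(\tilde f|_{X\setminus X'})$, while discrete in $X$, could a priori accumulate only on $X'$ (as in Example~\ref{ex:null}); Lemma~\ref{lem:generic}(iii) is precisely what rules this out for a generic extension, so the reduction to removing finitely many points in a neighborhood of each $L_j$ is legitimate, and that is the hinge of the whole argument.
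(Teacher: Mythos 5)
Your overall toolkit is the right one (Runge approximation for noncritical functions in a chart, a comparison map $\gamma$ with the glued identity $\tilde f\circ\gamma=g$, the splitting Theorem~\ref{th:splitting} with tangency to the identity along $X'$, amalgamation, and a final globalization via Theorem B/Oka--Weil controlled by the Stability Lemma~\ref{lem:stability}), but the geometric organization of your induction has a genuine gap. Your core step takes $B$ to be a small ball around an \emph{interior} critical point $q$ of the extension and $A$ to be ``the region where the function is already noncritical off $X'$''. For the glued function to be defined on a full neighborhood of $L$ you would need $A\cup B$ to contain such a neighborhood, i.e.\ $A$ must be essentially a neighborhood of $L$ with a ball-shaped hole around $q$; but in dimension $\ge 2$ such a set is not a Stein compact (its neighborhoods are not pseudoconvex along the inner spherical boundary), and the attaching set $C=A\cap B$ is then a shell, also not a Stein compact, so $(A,B)$ is not a Cartan pair in the sense of Definition~\ref{def:CP} and Theorem~\ref{th:splitting} does not apply. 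If instead you keep $A$ equal to the genuinely good region (no hole), then $A\cup B$ does not cover $L$, and your final Oka--Weil step only approximates on (a compact containing) $A\cup B$: on $L\setminus(A\cup B)$ the resulting global $F$ is not close to $\tilde f$, so the noncriticality of $\tilde f$ there transfers to nothing --- and, as Example~\ref{ex:null2} shows, noncriticality off $X'$ is in any case only preserved under approximation when one has both closeness and high-order agreement along $X'$ on the set in question. So ``removing the finitely many critical points one Cartan-pair step at a time around balls centered at them'' cannot be carried out as stated; the genericity reduction via Lemma~\ref{lem:generic}(iii), which you call the hinge of the argument, does not actually help here.

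The paper avoids this entirely: it never tries to localize at the critical points of the extension. Using \cite[Lemma 8.4]{FP2} one chooses a finite chain of compact $\cO(X)$-convex sets $A_0\subset A_1\subset\cdots\subset A_m=L$ with $K\cup(L\cap X')\subset A_0\subset W$, where each $A_{j+1}=A_j\cup B_j$ for a Cartan pair $(A_j,B_j)$ whose bump $B_j$ lies in $L\setminus X'\subset X_\reg$ inside a coordinate chart in which $C_j=A_j\cap B_j$ is polynomially convex. One then sweeps across $L$: on each bump the function is simply \emph{overwritten} by a noncritical Runge approximant from \cite[Theorem 3.1]{FF:Acta}, compared via \cite[Lemma 5.1]{FF:Acta}, and glued using Theorem~\ref{th:splitting} with $\alpha$ tangent to the identity to order $r$ along $X'$; whatever critical points the initial extension had in $L\setminus A_0$ disappear automatically, so no discreteness of the critical set is needed at this stage. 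The final step (writing $\wt F=f+\sum\tilde h_i\xi_i$ with the $\xi_i$ generating $\cE\,\cJ_{X'}^r$ over $L$, approximating the $\tilde h_i$ globally, and invoking Lemma~\ref{lem:stability}) is as you describe. Your outer iteration over an exhaustion $L\subset L_1\subset\cdots$ belongs to the proof of Theorem~\ref{th:th1} itself, not to this lemma, which only requires noncriticality on a neighborhood of the single compact $L$.
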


\begin{proof}
To simplify the exposition, we replace the number $r$ by the maximum of $r$ and the numbers $n_k\in\N$ over all points 
$p_k\in P\cap L$ (a finite set). Choosing $F$ to satisfy condition (i) for this new $r$, it will also satisfy condition (ii) at the points 
$p_k\in P\cap L$.

Let $W$ be the set from Lemma \ref{lem:extension}-(d). By \cite[Lemma 8.4, p.\ 662]{FP2} there exist finitely many 
compact $\cO(X)$-convex sets $A_0\subset A_1\subset \cdots \subset A_m=L$ such that 
$K \cup (L\cap X') \subset A_0 \subset W$ and for every $j=0,1,\ldots, m-1$ we have $A_{j+1}=A_j\cup B_j$, 
where $(A_j,B_j)$ is a Cartan pair (cf.\ Definition \ref{def:CP}) and $B_j\subset L \setminus X' \subset X_\reg$. 
Furthermore, the construction in \cite{FP3} gives for every $j=0,1,\ldots, m-1$ an open set $U_j\subset X_\reg$ containing 
$B_j$ and a biholomorphic map $\phi_j\colon U_j\to U'_j \subset \C^n$ onto an open subset of $\C^n$ 
(where $n$ is the dimension of $X$ at the points of $B_j$) such that the set $\phi_j(C_j)$ is polynomially convex in $\C^n$. 
(Here $C_j=A_j\cap B_j$.)  The proof of Lemma 8.4 in \cite{FP2} is written in the case when $X$ is a Stein manifold, 
but it also applies in the present situation, for example,  by embedding a relatively compact neighborhood of $L\subset X$ 
as a closed complex subvariety in a Euclidean space $\C^N$.

We first find a function $\wt F$ that is holomorphic on a neighborhood of $L$ and satisfies the conclusion of the lemma there. 
This is accomplished by a finite induction, starting with $F_0=f$ which by the assumption satisfies these properties on the 
open set $W\supset A_0$. We provide an outline and refer to \cite{FF:Acta} for further details.

By the assumption $F_0$ is noncritical on a neighborhood of the set $C_0=A_0\cap B_0$. Since $C_0$ is polynomially convex 
in a certain holomorphic coordinate system on a neighborhood of $B_0$ in $X\setminus X'\subset X_\reg$, 
Theorem 3.1 in \cite[p.\ 154]{FF:Acta} furnishes a noncritical holomorphic function $G_0$ on a neighborhood of $B_0$ in 
$X\setminus X'$ such that $G_0$ approximates $F_0$ as closely as desired uniformly on a neighborhood of $C_0$. 
Assuming that the approximation is close enough, we can apply Theorem \ref{th:splitting} to glue $F_0$ and $G_0$ 
into a new function $F_1$ that is holomorphic on a neighborhood of $A_0\cup B_0=A_1$ and has no critical points, 
except perhaps on subvariety $X'$. 
The gluing of $F_0$ and $G_0$ is accomplished by first finding a biholomorphic map $\gamma$ close to the 
identity on a neighborhood of the attaching set $C_0$ such that 
\[
	F_0=G_0\circ \gamma  \quad \text{on a neighborhood of}\ C_0. 
\]
Since $C_0$ is a Stein compact in the complex manifold $X\setminus X'$, such $\gamma$ is furnished by Lemma 5.1 
in \cite[p.\ 167]{FF:Acta}. If $\gamma$ is close enough to $\Id$ (which holds if $G_0$ is chosen sufficiently uniformly 
close to $F_0$ on a neighborhood of $C_0$), then Theorem \ref{th:splitting} furnishes a decomposition 
\[
	\gamma \circ\alpha=\beta,
\]
where $\alpha$ is a biholomorphic map close to the identity on a neighborhood of $A_0$ in $X$ and $\beta$ is a map 
with the analogous properties on a neighborhood of $B_0$ in $X$. By Theorem \ref{th:splitting} we can ensure in addition 
that $\alpha$ is tangent to the identity to order $r$ along the subvariety $X'$ intersected with its domain. 
(The domain of $\beta$ does not intersect $X'$.) Then 
\[
	F_0\circ \alpha = G_0\circ\beta \quad \text{on a neighborhood of}\ C_0,
\]
so the two sides amalgamate into a holomorphic function $F_1$ on a neighborhood of $A_0\cup B_0=A_1$. 
By the construction, $F_1$ approximates $F_0$ on a neighborhood of $A_0$, $F_1-F_0$ vanishes to order $r$ along $X'$, 
and $F_1$ is noncritical except perhaps on $X'$. The last property holds because the maps $\alpha$ and $\beta$ are 
biholomorphic on their respective domains and $\alpha|_{X'}$ is the identity. 

Repeating the same construction with $F_1$ we get the next function $F_2$ on a neighborhood of $A_2$, etc. 
In $m$ steps of this kind we find a function $\wt F=F_m$ on a neighborhood of the set $A_m=L$ satisfying the stated properties.

It remains to replace $\wt F$ by a function $F\in \cO(X)$ satisfying the same properties. This is done as in 
Lemma \ref{lem:extension} above. Let $\wt \cE$ be the sheaf (\ref{eq:sheafE}). Pick sections 
$\xi_1,\ldots,\xi_m\in\Gamma(X,\wt \cE)$ which generate $\wt\cE$ over the compact set $L$. 
By the construction of $\wt F$, the difference $\wt F-f$ is a section of $\wt\cE$ over a neighborhood of $L$. 
Hence Cartan's Theorem B furnishes holomorphic functions $\tilde h_1,\ldots, \tilde h_m\in \cO(U')$ an open set 
$U'\supset L$ such that
\[
		\wt F=f+\sum_{i=1}^m \tilde h_i\,\xi_i \quad \text{on}\ U'.
\]
Choose a compact $\cO(X)$-convex set $L'$ such that $L\subset \mathring L'\subset L'\subset U'$. Approximating each 
$\tilde h_i$ uniformly on $L'$ by a function $h_i\in \cO(X)$ and setting 
\[
		F=f+ \sum_{i=1}^m h_i \,\xi_i \in\cO(X)
\]
we get a function $F$ satisfying properties (i)--(iii). By Lemma \ref{lem:stability} the function $F$ also satisfies property (iv') 
provided that the differences $||h_i-\tilde h_i||_{L'}$ for $i=1,\ldots,m$ are small enough.
\end{proof}

\begin{proof}[Proof of Theorem \ref{th:th1}]
In view of Lemma \ref{lem:extension} we may assume that $f\in\cO(X)$. Choose an increasing sequence 
$K=K_0\subset K_1\subset \cdots \subset \bigcup_{i=0}^\infty K_i =X$ of compact $\cO(X)$-convex sets satisfying 
$K_i\subset \mathring K_{i+1}$ for every $i=0,1,\ldots$. Set $F_0=f$, $\epsilon_0=\epsilon/2$, and $r_0=r$. 
We inductively construct a sequence of functions $F_i\in\cO(X)$ and numbers $\epsilon_i>0$, $r_i\in \N$ such that 
the following conditions hold for every $i=0,1,2,\dots$:
\begin{itemize}
\item[\rm (a)] $\Crit(F_i|_{U_i\setminus X'})=\emptyset$ for an open neighborhood $U_i\supset K_i$,
\item[\rm (b)] $||F_{i} -F_{i-1}||_{K_{i-1}}<\epsilon_{i-1}$,
\item[\rm (c)] $F_{i}-F_{i-1}$ vanishes to order $r_{i-1}$ along the subvariety $X'$,
\item[\rm (d)] $F_{i}-F_{i-1}$ vanishes to order $n_k$ at each of the point $p_k\in P$, 
\item[\rm (e)] $0<\epsilon_{i} < \epsilon_{i-1}/2$ and $r_{i}\ge r_{i-1}$, and
\item[\rm (f)] if $F\in \cO(X)$ is such that $||F -F_{i}||_{K_{i}}< 2\epsilon_i$ and $F-F_{i}$ vanishes to order $r_{i}$ 
along $X'$, then $\Crit(F|_{U\setminus X'})=\emptyset$ for an open neighborhood $U\supset K_{i-1}$.
\end{itemize}
Assume that we have already found these quantities up to index $i-1$ for some $i \in\N$. (For $i=0$ the function $F_0$ 
satisfies condition (a) and the remaining conditions are void.) Lemma \ref{lem:th1} furnishes the next map $F_{i}\in\cO(X)$ 
which satisfies conditions (a)--(d). For this $F_{i}$ we then pick the next pair of numbers $\epsilon_{i}>0$ and $n_{i}\in\N$ 
such that conditions (e) and (f) hold. In view of the Stability Lemma \ref{lem:stability}, condition (f) holds by  as soon as 
$\epsilon_{i}>0$ is chosen small enough and $r_{i}\in \N$ is chosen big enough. This completes the induction step. 

It is straightforward to verify that the sequence $F_i$ converges uniformly on compacts in $X$ and the limit function 
$F= \lim_{i\to \infty} F_i \in \cO(X)$ satisfies the conclusion of Theorem \ref{th:th1}.
\end{proof}

In the proof of Theorem \ref{th:stratified} (see \S \ref{sec:stratified}) we shall combine Theorem \ref{th:th1} with the 
following lemma which provides extension from a subvariety and jet interpolation on a discrete set.

%
%
\begin{lemma}[{\bf Extension with jet interpolation}]  \label{lem:interpolate}
Let $X$ be a Stein space, $X'$ a closed complex subvariety of $X$, and $P=\{p_1,p_2,\ldots\}$ a closed discrete subset of 
$X'$. Given a function $f\in \cO(X')$ and germs $f_k\in\cO_{X,p_k}$ for each $p_k\in P$ such that 
$f_{p_k} - (f_k|_{X'})_{p_k} \in \mgot_{X',p_k}^{n_k}$ for some $n_k\in \N$, there 
exists $F\in\cO(X)$ such that $F|_{X'}=f$ and $F_{p_k} -f_k\in \mgot_{X,p_k}^{n_k}$ for every $p_k\in P$. 
\end{lemma}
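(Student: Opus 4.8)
The plan is to extend $f$ arbitrarily from $X'$ to all of $X$ and then to correct the extension near the points of $P$ by adding a section of the ideal sheaf $\cJ_{X'}$, chosen so as not to disturb the values on $X'$ while realizing the prescribed jets at the $p_k$. Since $X$ is Stein and $\cJ_{X'}$ is a coherent sheaf of ideals, the short exact sequence $0\to\cJ_{X'}\to\cO_X\to\cO_X/\cJ_{X'}\to 0$ together with Cartan's Theorem B ($H^1(X;\cJ_{X'})=0$) shows that the restriction map $\cO(X)\to\cO(X')$ is onto; I would fix $F_0\in\cO(X)$ with $F_0|_{X'}=f$.

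Next, for each $p_k\in P$ the hypothesis $f_{p_k}-(f_k|_{X'})_{p_k}\in\mgot_{X',p_k}^{n_k}$ is exactly the assumption needed to apply Lemma \ref{lem:germs} with the germ $f_{p_k}\in\cO_{X',p_k}$ and $f_k\in\cO_{X,p_k}$; it furnishes a germ $\tilde f_k\in\cO_{X,p_k}$ with $\tilde f_k-f_k\in\mgot_{X,p_k}^{n_k}$ and $(\tilde f_k|_{X'})_{p_k}=f_{p_k}=(F_0)_{p_k}|_{X'}$. Consequently the germ $g_k:=\tilde f_k-(F_0)_{p_k}$ restricts to the zero germ on $X'$, i.e.\ $g_k\in\cJ_{X',p_k}$. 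It therefore suffices to produce $G\in\Gamma(X,\cJ_{X'})$ with $G_{p_k}-g_k\in\mgot_{X,p_k}^{n_k}$ for every $k$; then $F:=F_0+G\in\cO(X)$ satisfies $F|_{X'}=f$ (as $G|_{X'}=0$) and $F_{p_k}\equiv(F_0)_{p_k}+g_k=\tilde f_k\equiv f_k \pmod{\mgot_{X,p_k}^{n_k}}$.

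To obtain $G$, I would mimic the sheaf construction in the proof of Lemma \ref{lem:extension}: let $\cE\subset\cO_X$ be the coherent sheaf of ideals with $\cE_{p_k}=\mgot_{X,p_k}^{n_k}$ and $\cE_x=\cO_{X,x}$ for $x\in X\setminus P$, and form the coherent sheaf of ideals $\cE\cJ_{X'}$. In the short exact sequence $0\to\cE\cJ_{X'}\to\cJ_{X'}\to\cJ_{X'}/\cE\cJ_{X'}\to 0$ the quotient is supported on the discrete closed set $P$ (away from $P$ one has $\cE=\cO_X$, hence $\cE\cJ_{X'}=\cJ_{X'}$), so the germs $g_k$ determine a global section of it. Since $H^1(X;\cE\cJ_{X'})=0$ by Theorem B, this section lifts to some $G\in\Gamma(X,\cJ_{X'})$, and then $G_{p_k}-g_k\in(\cE\cJ_{X'})_{p_k}=\mgot_{X,p_k}^{n_k}\cJ_{X',p_k}\subset\mgot_{X,p_k}^{n_k}$, as required. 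Setting $F=F_0+G$ completes the argument.

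I do not expect a genuine obstacle: once the right coherent sheaves are identified, the whole proof is a routine application of Cartan's Theorems A and B. The only point requiring care is the overlap between the two interpolation conditions — the restriction to $X'$ and the jets at the points of $P\subset X'$ — and this is handled entirely at the level of germs by Lemma \ref{lem:germs}, whose applicability is guaranteed precisely by the compatibility assumption $f_{p_k}-(f_k|_{X'})_{p_k}\in\mgot_{X',p_k}^{n_k}$ (without which the statement would visibly fail).
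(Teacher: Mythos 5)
Your proposal is correct and follows essentially the same route as the paper's own proof: Lemma \ref{lem:germs} to reconcile the jets at $p_k$ with the restriction to $X'$, a first extension $F_0$ of $f$, and then a correction by a section of $\cJ_{X'}$ obtained from the short exact sequence $0\to\cE\cJ_{X'}\to\cJ_{X'}\to\cJ_{X'}/\cE\cJ_{X'}\to 0$ and Cartan's Theorem B (the paper writes the correction as $F=\tilde f-h$ rather than $F=F_0+G$, which is only a sign convention). No gaps.
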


\begin{proof}
Let $\cJ_{X'}$ denote the sheaf of ideals of the subvariety $X'$. By Lemma \ref{lem:germs} there exists for every point 
$p_k\in P$ a germ $g_k\in\cO_{X,p_k}$ such that $f_k-g_k \in  \mgot_{X,p_k}^{n_k}$ and 
$(g_k|_{X'})_{p_k}=f_{p_k}  \in\cO_{X',p_k}$.  Pick a function $\tilde f\in \cO(X)$ with $\tilde f|_{X'}=f$; 
then $\tilde f_{p_k} - g_k \in \cJ_{X',p_k}$.  Let $\cE\subset \cO_X$ be the coherent sheaf of ideals whose stalk at 
any point $p_k\in P$ equals $\mgot_{p_k}^{n_k}$ and $\cE_x=\cO_{X,x}$ for every $x\in X\setminus P$. 
Consider the following short exact sequence of coherent analytic sheaves on $X$:
\[
	0\longrightarrow \cE \cJ_{X'} \longrightarrow \cJ_{X'} 
	\longrightarrow \cJ_{X'}/(\cE \cJ_{X'}) \longrightarrow 0.
\]
The quotient sheaf $\cJ_{X'}/(\cE \cJ_{X'})$ is supported on the discrete set $P$, and hence the collection of germs 
$\tilde f_{p_k} - g_k \in \cJ_{X',p_k}$ determines a section of this sheaf. Since $H^1(X;\cE \cJ_{X'})=0$ by  Theorem B, 
this section lifts to a section $h$ of $\cJ_{X'}$. 

Consider the function $F:=\tilde f-h \in\cO(X)$. We have $F|_{X'}=\tilde f|_{X'}=f$. Furthermore, for every point 
$p_k\in P$ the following identities hold in the ring $\cO_{X,p_k}/\mgot_{X,p_k}^{n_k}$ of $(n_k-1)$-jets at $p_k$:
\[
  	F_{p_k} = \tilde f_{p_k} - h_{p_k} = \tilde f_{p_k} - \bigl(\tilde f_{p_k} - g_k\bigr)
  	=	g_k = f_k \quad \mod \mgot_{X,p_k}^{n_k}.  
\]
Thus $F$ satisfies the conclusion of the lemma.
\end{proof}

\begin{remark}
\label{rem2:th1}
Lemma \ref{lem:interpolate} gives a version of Theorem \ref{th:th1} in which the function $f$ is assumed to be defined 
and holomorphic only on the subvariety $X'\subset X$ and on a neighborhood of a compact $\cO(X)$-convex set $K\subset X$.
Furthermore, we are given germs $f_k\in \cO_{X,p_k}$ at points $p_k\in P$ such that the conditions of 
Lemma \ref{lem:interpolate} hold. Then for any choice
of integers $n_k\in \N$ there exists a function $F\in\cO(X)$ satisfying Theorem \ref{th:th1}, 
except that conditions (i) and (ii) are replaced by the following conditions:
\begin{itemize}
\item[\rm (i')]  $F|_{X'}=f$, and 
\item[\rm (ii')] $F-f_k$ vanishes to order $n_k$ at each point $p_k\in P$. 
\end{itemize}
\end{remark}


\section{Stratified noncritical functions on Stein spaces}  \label{sec:stratified}

In this section we prove Theorem \ref{th:stratified} on the existence of stratified noncritical holomorphic functions.
As shown in the Introduction, this will also prove Theorems \ref{th:main} and \ref{th:mainbis}.

\begin{proof}[Proof of Theorem \ref{th:stratified}]
Let $(X,\Sigma)$ be a stratified Stein space (see \S\ref{sec:intro}). For every integer $i\in \Z_+$ we let $\Sigma_i$ denote 
the collection of all strata of dimension at most $i$ in $\Sigma$, and let $X_i$ denote the union of all strata in the family 
$\Sigma_i$ (the $i$-skeleton of $\Sigma$). Since the boundary of any stratum is a union of lower dimensional strata, 
$X_i$ is a closed complex subvariety of $X$ of dimension $\le i$ for every $i\in\Z_+$. Clearly $\dim X_i=i$ precisely 
when $\Sigma$ contains at least one $i$-dimensional stratum; otherwise $X_i=X_{i-1}$.  We have 
$X_0\subset X_1\subset \cdots\subset \bigcup_{i=0}^\infty X_i=X$, the sequence $X_i$ is stationary on any 
compact subset of $X$, and $(X_i,\Sigma_i)$ is a stratified Stein subspace of $(X,\Sigma)$ for every $i$. 
Note that $X_0=\{p_1,p_2,\ldots\}$ is a discrete subset of $X$. 

By the assumption of Theorem \ref{th:stratified} we are given for each $p_k\in X_0$ a germ $f_k\in\cO_{X,p_k}$. 
Our task is to find a $\Sigma$-noncritical function $F \in\cO(X)$ which agrees with the germ $f_k$ at $p_k\in X_0$ to 
order $n_k\in \N$.  If the germs $f_k \in \cO_{X,p_k}$ $(p_k\in X_0$) are chosen (strongly) noncritical and $n_k\ge 2$ 
for each $k$,  then the resulting function $F\in\cO(X)$ will be (strongly) noncritical on $X$.

Let $F_0\colon X_0\to\C$ be the function on the zero dimensional skeleton defined by $F_0(p_k)=f_k(p_k)$ for every 
$p_k\in X_0$. We shall inductively construct a sequence of functions $F_i\in\cO(X_i)$ satisfying the following conditions 
for $i=1,2,\ldots$:
\begin{itemize}
\item[\rm (i)]     $F_i|_{X_{i-1}} = F_{i-1}$, 
\item[\rm (ii)]    $F_i-f_k|_{X_i}$ vanishes to order $n_k$ at every point $p_k \in X_0$, and
\item[\rm (iii)]   $F_i$ is a stratified noncritical function on the stratified Stein space $(X_i,\Sigma_i)$. \end{itemize}
Assuming that we have found functions $F_1,\ldots,F_{i-1}$ with these properties, we now explain how to find the next function 
$F_i$ in the sequence. If $X_i=X_{i-1}$ then we can simply take $F_i=F_{i-1}$. If this is not the case, then 
$X_i\setminus X_{i-1}$ is a complex manifold of dimension $i$. Apply Lemma \ref{lem:interpolate} with 
$X=X_i$, $X'=X_{i-1}$ and $f=F_{i-1}$ to find a function $G_i\in \cO(X)$ (called $F$ in the lemma) which satisfies 
\begin{itemize}
\item[\rm (i')]    $G_i|_{X_{i-1}} = F_{i-1}$, and
\item[\rm (ii')]   $(G_i)_{p_k} - (f_k|_{X_i})_{p_k} \in \mgot_{X_i,p_k}^{n_k}$ at every point $p_k \in X_0$. 
\end{itemize}
Now $G_i$ satisfies the hypotheses of Theorem \ref{th:th1} (with $X=X_i$, $X'=X_{i-1}$ and $f=G_i$), so we get a function 
$F_i\in\cO(X)$ which agrees with $G_i$ on $X_{i-1}$, it agrees with $G_i$ (and hence with $f_k$) to order $n_k$ at 
$p_k\in X_0$ for each $k$, and is noncritical on $X_i\setminus X_{i-1}$. Hence $F_i$ satisfies properties (i)--(iii) and 
the induction may proceed. 

Since the sequence of subvarieties $X_0\subset X_1\subset \cdots\subset \bigcup_{i=0}^\infty X_i=X$ is stationary on any
compact subset of $X$, the sequence of functions $F_i\in\cO(X_i)$ obtained in this way determines a holomorphic function 
$F\in\cO(X)$ by setting $F=F_i$ on $X_i$ for any $i\in\N$. 
It is immediate that $F$ satisfies the conclusion of Theorem \ref{th:stratified}. 
\end{proof}

A similar construction yields the following result.

\begin{theorem} \label{th:stratbis} 
Given a reduced Stein space $X$, a closed complex subvariety $X'$ of  $X$ and a function $f \in \cO(X')$, there exists 
$F\in \cO(X)$ such that $F|_{X'}=f$ and $F$ is strongly noncritical on $X\setminus X'$. Alternatively, we can choose 
$F$ to have critical points at a prescribed discrete set $P$ in $X$ which is contained in $X\setminus X'$. 
\end{theorem}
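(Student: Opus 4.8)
The plan is to run the skeleton induction from the proof of Theorem~\ref{th:stratified}, now with the subvariety $X'$ (carrying the given datum $f$) playing the role of the $0$-skeleton, and with the $0$-dimensional strata of $\Sigma$ lying outside $X'$ handled by jet interpolation with (strongly) noncritical germs, exactly as in the passage from Theorem~\ref{th:stratified} to Theorems~\ref{th:main} and \ref{th:mainbis}.

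First I would choose a complex analytic stratification $\Sigma$ of $X$ for which $X'$ is a union of strata; such a stratification exists (e.g.\ by incorporating $X'$ and its singular loci into the standard descending chain, or by invoking a Whitney stratification subordinate to a closed subvariety \cite{Whitney,Whitney2}). For the ``alternative'' statement I would substratify further so that the prescribed discrete set $P$ lies in the $0$-skeleton. Write $X_i$ for the $i$-skeleton of $\Sigma$, set $W_{-1}:=X'$ and $W_i:=X'\cup X_i$ for $i\ge 0$, and put $Q:=X_0\setminus X'$, a closed discrete subset of $X$ disjoint from $X'$, with $P\subset Q$ in the alternative case. Since $X'$ and each $X_i$ are unions of strata, every $W_i$ is a closed complex subvariety of $X$, the chain $X'=W_{-1}\subset W_0\subset W_1\subset\cdots$ exhausts $X$ and is stationary on every compact set, and $W_i\setminus W_{i-1}$ is a union of $i$-dimensional strata contained in $X\setminus X'$, hence either empty or an $i$-dimensional complex manifold. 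Being open in $W_i$ and smooth, $W_i\setminus W_{i-1}$ lies in $(W_i)_\reg$; therefore $(W_i)_\sing\subset W_{i-1}$, which is exactly what allows Theorem~\ref{th:th1} to be applied at each level.

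For each $p\in Q$ I would fix a germ $g_p\in\cO_{X,p}$: one that is strongly noncritical at $p$ (say the restriction to $X_p\subset T_pX\cong\C^N$ of a generic linear function, as in the proof of Theorem~\ref{th:mainbis}) if $p\notin P$, and one that is critical at $p$ (say a constant) if $p\in P$. Then I would build inductively functions $F_i\in\cO(W_i)$ with $F_i|_{W_{i-1}}=F_{i-1}$, with $F_i$ agreeing to order $2$ with $g_p$ at every $p\in Q$ (as germs on $W_i$), and with $F_i$ having no critical points on $W_i\setminus W_{i-1}$, starting from $F_{-1}:=f\in\cO(X')$. The step $W_{-1}\to W_0$ is an elementary extension: by Cartan's Theorem~B there is $F_0\in\cO(W_0)$ with $F_0|_{X'}=f$ and $F_0(p)=g_p(p)$ for every $p\in Q$ (only the values at $Q$ carry information, since $W_0=\{p\}$ near each such $p$). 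For a step $W_{i-1}\to W_i$ with $i\ge 1$: if $W_i=W_{i-1}$ set $F_i:=F_{i-1}$; otherwise apply Lemma~\ref{lem:interpolate} (with ambient space $W_i$, subvariety $W_{i-1}\subset W_i$, discrete set $Q\subset W_{i-1}$, function $F_{i-1}$, germs $g_p$ and orders $2$, the required compatibility being the inductive hypothesis at level $i-1$) to get $G_i\in\cO(W_i)$ extending $F_{i-1}$ and agreeing to order $2$ with each $g_p$; then apply Theorem~\ref{th:th1} (with ambient $W_i$, subvariety $W_{i-1}\supset(W_i)_\sing$, discrete set $Q$, empty compact set, function $G_i$) to obtain $F_i\in\cO(W_i)$ which agrees with $G_i$ along $W_{i-1}$, agrees to order $2$ with each $g_p$ at $p\in Q$, and has no critical points on $W_i\setminus W_{i-1}$. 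Passing to the limit of the stationary exhaustion yields $F\in\cO(X)$ with $F|_{W_i}=F_i$ for all $i$; in particular $F|_{X'}=f$.

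Finally I would check that $F$ is strongly noncritical on $X\setminus(X'\cup P)$ and, in the alternative case, critical at each point of $P$. If $x\in X\setminus X'$ with $x\notin X_0$, let $i\ge 1$ be minimal with $x\in W_i$; then $x$ lies in an $i$-dimensional stratum $S\subset W_i\setminus W_{i-1}$, and since no stratum of dimension $\le i$ other than $S$ passes through $x$ we have $(W_i)_x=S_x$, a positive-dimensional manifold germ contained in every local irreducible component of $X$ at $x$ (as argued in the proof of Theorem~\ref{th:mainbis}); because $F|_S=F_i|_S$ is noncritical at $x$, observation~(*) gives that $F$ is strongly noncritical at $x$. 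If $x\in Q$, then for every $i$ large enough that $W_i$ agrees with $X$ near $x$ we get $F_x=(F_i)_x$ and $F_x-g_x\in\mgot_{X,x}^2$, so $F$ is strongly noncritical at $x$ when $x\notin P$ (since $g_x$ is) and critical at $x$ when $x\in P$. The only genuinely new points compared with Theorem~\ref{th:stratified} are organizing $\Sigma$ so that $(W_i)_\sing\subset W_{i-1}$ and the compatibility bookkeeping for the $0$-dimensional strata outside $X'$; the analytic heart --- removing critical points off the singular locus --- is supplied unchanged by Theorem~\ref{th:th1}, hence ultimately by the splitting lemma, Theorem~\ref{th:splitting}.
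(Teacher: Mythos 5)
Your proposal is correct and follows essentially the same route as the paper's own proof: an induction over the skeleta of a stratification adapted to $X'$ (the paper stratifies $X\setminus X'$ and sets $X_i:=X'\cup\{\text{strata of dim}\le i\}$, which is the same as your $W_i$), with each step performed by Lemma~\ref{lem:interpolate} followed by Theorem~\ref{th:th1}, and the limit taken using stationarity on compacts. Your explicit bookkeeping --- checking $(W_i)_\sing\subset W_{i-1}$, prescribing strongly noncritical (resp.\ constant) second-order jets at the $0$-dimensional strata outside (resp.\ inside) $P$, and invoking observation~(*) as in the proof of Theorem~\ref{th:mainbis} --- only spells out details the paper leaves implicit.
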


\begin{proof}
We can stratify the difference $X\setminus X'=\bigcup_j S_j$ into a union of pairwise disjoint connected complex manifolds 
(strata) such that
\begin{itemize}
\item the boundary $bS_j=\overline S_j \setminus S_j$ of any stratum is contained in the union of $X'$ and  of lower 
dimensional strata, 
\item every point of $P$ is a zero dimensional stratum, and 
\item every compact set in $X$ intersects at most finitely many strata.
\end{itemize}
Consider the increasing chain 
$X' \subset X_0 \subset X_1\subset\cdots \subset \bigcup_{i=1}^\infty X_i =X$
of closed complex subvarieties,  where $X_i$ is the union of $X'$ and all strata $S_j$ of dimension at most $i$. 
In particular, we have $P\cup X' \subset X_0$. Then $X_i\setminus X_{i-1}$ is either empty or a disjoint union of 
$i$-dimensional complex manifolds contained in $X\setminus X'$. 

Let $P=\{p_1,p_2,\ldots\}$, and assume that we are given germs $f_k\in \cO_{X,p_k}$ and integeres $n_k\in\N$. 
We start  with the function $F_0\in \cO(X_0)$ which agrees with $f$ on $X'$ and satisfies $F(p_k)=f_k(p_k)$ for every $p_k \in P$. 
By Lemma \ref{lem:interpolate} we can find a function $G_1\in \cO(X)$ which agrees with $F_0$ on $X_0$ and satisfies 
$(G_1)_{p_k} - f_k\in \mgot_{X,p_k}^{n_k}$ at each point $p_k\in P$. 
Theorem \ref{th:th1}, applied to the Stein pair $X_0\subset X_1$ 
and the function $G_1$ (denoted $f$ in the theorem), furnishes a function $F_1\in\cO(X_1)$ which agrees with $G_1$ 
(and hence with $F_0$) on $X_0$ and satisfies $(F_1)_{p_k} - (f_k|_{X_1})_{p_k} \in \mgot_{X_1,p_k}^{n_k}$ 
for every $p_k\in P$. This completes the first step of the induction. By using again Lemma \ref{lem:interpolate} and then 
Theorem \ref{th:th1} we find the next function $F_2\in\cO(X_2)$ such that $F_2|_{X_1}=F_1$ and 
$(F_2)_{p_k} - (f_k|_{X_2})_{p_k} \in  \mgot_{X_2,p_k}^{n_k}$ at every point $p_k\in P$. 
Clearly this process can be continued inductively. We obtain a sequence $F_i\in\cO(X_i)$ for $i=1,2,\ldots$ 
such that the function $F\in\cO(X)$, defined by $F|_{X_i}=F_i$ for every $i=1,2,\ldots$, satisfies the conclusion of 
Theorem \ref{th:stratbis}. 
\end{proof}

\begin{corollary}
\label{cor:extension}
Let $X$ be a reduced Stein space, $X'$ a closed complex subvariety of $X$ without isolated points, and $f\in\cO(X')$ a 
noncritical holomorphic function. Then there exists a noncritical function $F\in\cO(X)$ such that $F|_{X'}=f$.
\end{corollary}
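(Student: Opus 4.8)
The plan is to read this off from Theorem~\ref{th:stratbis}. First I would apply that theorem to the Stein space $X$, the closed subvariety $X'$, and the given function $f\in\cO(X')$; this produces a holomorphic function $F\in\cO(X)$ with $F|_{X'}=f$ that is strongly noncritical on $X\setminus X'$. Since a germ that is strongly noncritical is in particular noncritical --- if $dF_x$ fails to vanish on the Zariski tangent space $T_xV$ of some local irreducible component $V$ of $X$ at $x$, then, as $T_xV\subset T_xX$, it fails to vanish on all of $T_xX$ --- the function $F$ has no critical points in $X\setminus X'$.

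It then remains to check that $F$ is noncritical at every point $x\in X'$ as well, and this is the only place the hypothesis ``$X'$ has no isolated points'' is used. Fix $x\in X'$; then $x$ is a nonisolated point of $X'$ (hence also of $X$). The restriction homomorphism $\cO_{X,x}\to\cO_{X',x}$ maps $\mgot_{X,x}$ onto $\mgot_{X',x}$ and $\mgot_{X,x}^2$ into $\mgot_{X',x}^2$, so it induces a surjective $\C$-linear map $T_x^*X=\mgot_{X,x}/\mgot_{X,x}^2\to\mgot_{X',x}/\mgot_{X',x}^2=T_x^*X'$, dual to the inclusion $T_xX'\hookrightarrow T_xX$. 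The class of $F-F(x)$ in $T_x^*X$ is $dF_x$, and because $F|_{X'}=f$ and $F(x)=f(x)$ its image in $T_x^*X'$ equals the class of $f-f(x)$, i.e.\ $df_x$. As $f$ is noncritical at the nonisolated point $x\in X'$ we have $df_x\neq 0$, whence $dF_x\neq 0$; equivalently $F-F(x)\in\mgot_{X,x}\setminus\mgot_{X,x}^2$, so $F$ is noncritical at $x$. Together with the previous paragraph this shows that $F$ is noncritical at every point of $X$.

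There is essentially no serious obstacle here once Theorem~\ref{th:stratbis} is in hand; the single point requiring care is the elementary remark --- a variant of observation~$(*)$ in the Introduction --- that noncriticality of $f$ along $X'$ already forces noncriticality at points of $X'$ of \emph{any} holomorphic extension. That remark genuinely needs $X'$ to be free of isolated points: at an isolated point of $X'$ a constant extension of $f$ would be a counterexample.
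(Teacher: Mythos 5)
Your proposal is correct and follows the paper's intended route: the corollary is stated as an immediate consequence of Theorem~\ref{th:stratbis}, obtained exactly as you do by taking the extension $F$ strongly noncritical on $X\setminus X'$ and observing that along $X'$ noncriticality of $f=F|_{X'}$ forces $F-F(x)\notin\mgot_{X,x}^2$, since restriction sends $\mgot_{X,x}^2$ into $\mgot_{X',x}^2$. Your cotangent-space argument at points of $X'$ (and the role of the no-isolated-points hypothesis) is precisely the variant of observation~(*) the paper relies on, so nothing further is needed.
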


\medskip
\textit{Acknowledgements.}
Research on this paper was supported in part by the program P1-0291 and the grant J1-5432 from ARRS, Republic of Slovenia.

I wish to thank the referee for the remarks which led to improved presentation.

\bibliographystyle{amsplain}

\vskip 0.5cm

\noindent {\scshape Franc Forstneri\v c}

\noindent Faculty of Mathematics and Physics, University of Ljubljana, and 

\noindent Institute of Mathematics, Physics and Mechanics 

\noindent Jadranska 19, 1000 Ljubljana, Slovenia

\noindent e-mail: {\tt franc.forstneric@fmf.uni-lj.si}

\end{document}